\theoremstyle{thmstyleone}%
\newtheorem{theorem}{Theorem}
\theoremstyle{thmstyletwo}%
\newtheorem{example}{Example}%
\newtheorem{remark}{Remark}%
\theoremstyle{thmstylethree}%
\newtheorem{definition}{Definition}
\newtheorem{lemma}{Lemma} 
\newtheorem{corollary}{Corollary}
\DeclareMathOperator*{\argmax}{arg\,max}
\newcommand{\bU}{\boldsymbol{V}}
\newcommand{\bX}{\boldsymbol{X}}
\newcommand{\bG}{\boldsymbol{G}}
\newcommand{\bw}{\boldsymbol{w}}
\newcommand{\bx}{\boldsymbol{x}}
\newcommand{\ba}{\boldsymbol{a}}
\newcommand{\bzero}{\boldsymbol{0}}
\newcommand{\bGamma}{\boldsymbol{\Gamma}}
\newcommand{\bSigma}{\boldsymbol{\Sigma}}
\newcommand{\bbeta}{\boldsymbol{\beta}}
\newcommand{\btheta}{\boldsymbol{\theta}}
\newcommand{\stheta}{\btheta^\star_{\model}}
\newcommand{\wtheta}{\widehat{\btheta}_{\model,n}}
\newcommand{\blambda}{\boldsymbol{\lambda}}
 \newcommand{\bvartheta}{\boldsymbol{\vartheta}}
\newcommand{\bnu}{\boldsymbol{\nu}}
\newcommand{\bzeta}{\boldsymbol{\zeta}}
\newcommand{\sample}{\overline{\bX}}
\DeclareMathOperator*{\argmin}{arg\,min}
\newcommand{\model}{\textbf{m}}
\newcommand{\trueb}{\model,n,p,\stheta}
\newcommand{\true}{\model,n,p,\stheta,b}
\newcommand{\est}{\model,n,p,\wtheta,b}
\newcommand{\estall}{\model,n,p,\wtheta}
\newcommand{\trueall}{\model,n,p,\stheta}
\newcommand{\any}{\model,n,p,\btheta,b}
\newcommand{\anyb}{\model,n,p,\btheta}
\newcommand{\alt}{(\Upsilon n_b^{-1/2})}
\newcommand{\altnop}{\Upsilon n_b^{-1/2}}
\newcommand{\blamany}{\boldsymbol{\lambda}_{\any}}
\newcommand{\bZ}{\boldsymbol{Z}}
\newcommand{\bZany}{\boldsymbol{Z}_{\any}}
\newcommand{\bS}{\boldsymbol{S}}
\newtheorem{assumptions}{Assumptions}
 \title{Goodness-of-fit testing of the distribution of posterior classification probabilities for validating model-based clustering}
\author[1]{Salima El Kolei}
\author[2]{Matthieu Marbac}
\affil[1]{Univ. Rennes, Ensai, CNRS, CREST-UMR 9194, 35000 Rennes, France}
\affil[2]{Université Bretagne Sud, UMR CNRS 6205, LMBA, F-56000 Vannes, France.}
\begin{document}
 
\maketitle

\abstract{We present the first method for assessing the relevance of a model-based clustering result in a general framework.  Standard validation criteria, like the adjusted Rand index, rely on external labels to assess partition accuracy; consequently, they are inapplicable to real-world clustering problems where labels are missing. In contrast, our method offers an internal goodness-of-fit diagnostic, since it evaluates the validity of the clustering mechanism by testing the specification of the posterior probabilities of classification defined on the unit simplex. Because this simplex dimension is fixed by the number of clusters, the procedure naturally circumvents the curse of dimensionality, making it applicable to high-dimensional data where traditional density-based tests fail.  The testing procedure requires only a consistent estimator of the parameters and the associated posterior classification probabilities for each observation, and its implementation is straightforward, as no additional model fitting is needed. Under the null hypothesis, the method exploits the fact that any functional transformation of the posterior probabilities has the same expectation under both the model being tested and the true data-generating process. The resulting goodness-of-fit test is constructed via an empirical likelihood approach with a growing number of moment conditions, allowing asymptotic detection of any alternative.  A block-splitting strategy, employed to account for parameter estimation, provides a vector of test statistics that behave like a vector of independent chi-square random variables. Therefore, the goodness-of-fit of the posterior classification probabilities is assessed via the goodness-of-fit of the vector of empirical likelihood ratio test statistics. Hence, based on the distribution of this vector of statistics, different goodness-of-fit tests (e.g., Kolmogorov-Smirnov) can be used to investigate the distribution of the vector of test statistics with an exact asymptotic significance level\\
\textbf{keywords:} Clustering; Empirical likelihood;  Estimating equations;  Growing number of equations; Goodness-of-fit; Mixture models;
}

\section{Introduction}
Model-based clustering enables clustering by estimating the distribution of the observed variables using a finite mixture model \citep{McL00,compiani2016using,Fruhwirth2019handbook,chen2023statistical}. In this approach, subjects generated from the same mixture component are considered to belong to the same cluster. Unlike non-model-based clustering methods, which primarily aim to estimate a partition, model-based clustering allows for the estimation of posterior classification probabilities, thereby capturing the uncertainty in cluster assignments. As a result, with model-based clustering, a partition can be estimated, and the risk of misclassification can be computed for each observation. This framework assumes the existence of a $d$-dimensional random variable $\bX\in\mathcal{X}$ and a latent variable $\bU$ defined on ${1,\ldots,K}$, where $K$ is the number of clusters. The variables $\bU$ and $\bX$ are assumed to be dependent, and since $\bU$ is not observed, the marginal distribution of the observed data $\bX$ follows a mixture model with $K$ components. Consequently, model-based clustering estimates the distribution of $\bX$ and thus achieves clustering by estimating the posterior classification probabilities (\emph{i.e.,} the conditional distribution of $\bU$ given $\bX$).

It is crucial to distinguish the validation of the estimated posterior probabilities from the assessment of clustering performance. In the clustering literature, indices such as the Adjusted Rand Index (ARI) or Normalized Mutual Information (NMI) are standard for evaluating the quality of a partition. However, these are \emph{external validation} measures that require knowledge of the ground-truth labels. While these metrics are commonly used in simulation studies to compare the performance of different clustering algorithms, they are, by definition, unavailable in real-world unsupervised applications. Consequently, they cannot be used as a diagnostic tool to assess whether a chosen model is appropriate for a specific dataset. Our work addresses this challenge by focusing on \emph{internal validation}, specifically the goodness-of-fit of the latent structure itself. Unlike global density-based tests that assess the distribution of $\bX$, we propose to test whether the classification mechanism represented by the posterior probabilities is well-specified. This provides a formal diagnostic framework that remains operational when no ground truth is available and targets the clustering objective rather than the overall modeling fit.

The distribution of $\bX$ is specified by a model $\model=\{K,\mathcal{F}\}$, which defines the number of components $K$ and a family of component distributions $\mathcal{F}$. Hence, for a given model $\model$, the set of densities is
\begin{equation}\label{eq:mixturemodel}
\mathcal{G}_\model = \left\{ g_{\model,\btheta}(\cdot) = \sum_{k=1}^{K} \pi_k f_k(\cdot;\bvartheta_k)
,\, (f_1,\ldots,f_{K})\in\mathcal{F}  \text{ and } \btheta=(\boldsymbol{\pi}^\top,\bvartheta_1^\top,\ldots,\bvartheta_{K}^\top)^\top \in \Theta_{\model} \right\},
\end{equation}
where $f_k$ denotes the density of component $k$, defined such that the set of $K$ component densities belongs to the space $\mathcal{F}$. The parameter vector $\btheta=(\boldsymbol{\pi}^\top,\bvartheta_1^\top,\ldots,\bvartheta_{K}^\top)^\top$ groups all model parameters and belongs to the space $\Theta_{\model}$, which depends on $\model$. The component proportions satisfy $0<\pi_k<1$ and $\sum_{k=1}^{K}\pi_k=1$, meaning that the proportion vector $\boldsymbol{\pi}=(\pi_1,\ldots,\pi_{K})^\top$ is defined on a simplex of size $K$. This definition allows $\Theta_{\model}$ to be either finite- or infinite-dimensional, thus accommodating both parametric and nonparametric approaches. In a parametric framework, $\mathcal{F}$ can be the space defined as the product of $K$ subspaces, each composed of all $d$-variate Gaussian densities. In this case, \eqref{eq:mixturemodel} defines the set of all Gaussian mixture models \citep{banfield1993model}, with $\bvartheta_k$ specifying the mean and covariance matrix of component $k$. If parsimonious Gaussian mixtures are considered, such as isotropic Gaussian mixture models or mixtures of factor analyzers \citep{mcnicholas2008parsimonious}, the model $\model$ imposes constraints on $\Theta_{\model}$. Other standard parametric mixture models covered by \eqref{eq:mixturemodel} include skewed mixture models \citep{wallace2018variable}, beta mixture models \citep{ji2005applications}, and gamma mixture models \citep{mayrose2005gamma}. Nonparametric approaches are also encompassed by \eqref{eq:mixturemodel}. For instance, $\mathcal{F}$ can be defined as a product space of $K$ subspaces $\mathcal{F}_k$, where $f_k \in \mathcal{F}_k$ and $\mathcal{F}_k$ is the set of $d$-variate densities defined as products of univariate densities \citep{HettmanspergerJRSSB2000,HallAOS2003}. In this case, $\bvartheta_k$ is an infinite-dimensional parameter specifying all univariate densities for component $k$. Finally, \eqref{eq:mixturemodel} also covers semiparametric approaches. For example, location mixtures \citep{hunter2007inference} fall within this framework, as $\mathcal{F}$ allows all components to share the same symmetric density function up to a translation. In this case, each $\bvartheta_k$ specifies both the translation parameter and the common symmetric density. Thus, $\bvartheta_k$ contains an infinite-dimensional component (the symmetric density) shared across all components, as well as a finite-dimensional component (the scalar defining the translation) without constraints across components. The distributions defined by \eqref{eq:mixturemodel} can also accommodate complex spaces $\mathcal{X}$, including functional data \citep{Bouveyron}, partially observed data \citep{miaoJASA2016}, mixed-type data \citep{marbac2017model}, tensor data \citep{mai2022doubly}, and extreme data \citep{tendijck2023modeling}.

For most applications, the true model $\model^\star$ is unknown. Therefore, the standard procedure seeks to identify the best model $\widehat\model$ from the data, chosen among a finite collection of models $\mathcal{M}=\{\model_1,\model_2,\ldots\}$. This problem is inherently complex due to the nature of clustering itself, which does not allow for the selection of $\widehat\model$ based on the accuracy of posterior probability estimates. Unlike in supervised or semi-supervised classification, prediction error rates cannot be directly assessed since the realizations of the latent cluster membership variable $\bU$ are unavailable. As a result, model selection is often guided by the model’s ability to capture the distribution of the observed variables, even though the fundamental objective remains the estimation of posterior probabilities of classification. In a parametric framework, model selection can be achieved using likelihood ratio tests or information criteria. For instance, \citet{Chen} proposes a homogeneity test to determine whether $K=1$ in a Gaussian mixture model using a likelihood ratio test. Alternatively, leveraging the control of the log-likelihood ratio obtained by \citet{DacunhaAOAS1999} through locally conic parameterization, \citet{KeribinSan2000} demonstrates the consistency of likelihood-based penalization criteria, including the Bayesian Information Criterion (BIC) \citep{Schwarz:78}. Other approaches within the parametric framework include those proposed by \citet{james2001consistent} and \citet{woo2006robust}. In a nonparametric framework, model selection efforts have primarily focused on determining the number of components in a mixture model where each component is defined as a product of univariate densities. These approaches often rely on estimating the rank of a specific matrix \citep{KasaharaJRSSB2014,BonhommeJRSSB2016} or a specific operator \citep{Kwon2019estimation}. Recently, \citet{Chaumaray} extended model selection for this class of mixture models by addressing the challenge of feature selection, which imposes constraints on $\mathcal{F}$. In all the methods mentioned above, authors focus on the consistency of $\widehat\model$ under the assumption that the true model belongs to the set of candidate models (\emph{i.e.,} $\model^\star\in\mathcal{M}$). Therefore, these methods allow for the selection of the \emph{best} model among the competing ones but do not assess whether this model is equal (or at least close) to the true model. Consequently, they provide no information on the relevance of the selected model. In particular, they cannot guarantee that the estimated uncertainty in cluster assignments quantified by the posterior probabilities of classification faithfully represents the underlying data structure.  For instance, in the parametric case, they do not allow for evaluating the validity of the parametric assumptions underlying $\widehat\model$, whereas in the nonparametric case, they do not assess the assumption of independence between variables within components or the assumption of symmetry within components.

To investigate the relevance of $\widehat\model$, goodness-of-fit methods such as the Kolmogorov–Smirnov test \citep{massey1951kolmogorov} or the Cramer-von Mises test \citep{darling1957kolmogorov} could be considered. However, since the goal is to test only the relevance of $\widehat\model$, the null hypothesis is composite. To implement these tests, the parameters must be specified. After the unknown parameters are estimated from the entire data set, \citet{braun1980simple} proposes a procedure where the transformed sample is randomly partitioned into a large number of groups, and a goodness-of-fit statistic is calculated for each group. Note that the number of groups is determined according to the rate of convergence of the estimator of the parameter computed on the entire data set. These statistics are then used to construct a test which, asymptotically, can attain any desired level, and which requires only standard tables of critical values for its implementation. Alternatively, goodness-of-fit testing can be achieved using empirical likelihood \citep{baggerly}. Among this family of methods, one can highlight the contribution of \citet{Peng}, who generalize the empirical likelihood approach \citep{owen2001empirical} to allow the number of constraints to grow with the sample size and for the constraints to use estimated criterion functions. Allowing the number of constraints to grow with the sample size enables the detection of any alternative asymptotically. Again, since the null hypothesis is composite, a value for the parameters must be considered. If the asymptotic distribution of the maximum of the empirical likelihood ratio on $\btheta$ over $\Theta_{\model}$ is established by \citet{QinAOAS1994}, the procedure becomes complex for mixture models since finding the parameters that maximize the empirical likelihood is challenging. Moreover, estimating the parameters of a mixture model is often done via iterative algorithms such as the EM algorithm \citep{mclachlan2008algorithm} or the MM algorithm \citep{LevineBiometrika2011}, which can be computationally intensive. Therefore, it would be desirable for the testing procedure not to require any additional parameter estimation. Alternatively, \citet{BAGKAVOS2023107815} propose a goodness-of-fit procedure that uses the maximum likelihood estimates of normal mixture densities with a known number of components. All of the goodness-of-fit procedures mentioned above suffer from two main drawbacks: their power decreases drastically as the dimension of $\bX$ increases, and they do not directly address the clustering goal (\emph{i.e.,} modeling the distribution of the conditional probabilities of classification).

In this paper, we present the first method that permits validating a model-based clustering procedure by directly focusing on the adjustment of the conditional distribution of the latent variable $\bU$ given the observed variable $\bX$ (\emph{i.e.,} the posterior probabilities of classification). Note that, whatever the nature of the observed variable $\bX$, the conditional probabilities of classification are always defined on a simplex of size $K$. Therefore, the proposed method can be used for any type of data $\bX$. To be applied, the testing procedure only requires  a consistent estimator of the model parameters as well as its associated conditional probabilities of classification for each observation. Thus, the implementation of the testing procedure is straightforward since it does not require any additional estimator. Indeed, it only considers the parameter estimators preliminarily assessed during the model-based clustering step. Hence, the usual estimation algorithms can be used for clustering. However, the procedure requires knowledge of the convergence rate of such parameter estimators. Under the null hypothesis, the method relies on the fact that any functional transformation of the posterior probabilities of classification has the same expectation with respect to both the model being tested and the true model. The use of functional moments permits circumventing the fact that nothing is known about the true distribution of $(\bU^\top,\bX^\top)^\top$ except that the observed data are generated from it. Hence, the empirical mean of any functional transformation of the posterior probabilities of classification can consistently estimate the expectation under the true model, while the expectation under the model being tested can be easily assessed via Monte Carlo methods. The goodness-of-fit testing is achieved with an empirical likelihood method that considers a number of moment conditions increasing with the sample size in order to asymptotically detect any alternative. Since the procedure uses an estimator of the parameters $\widehat{\btheta}_{\model,n}$ previously estimated, the empirical log-likelihood ratio does not converge to a chi-square distribution. To circumvent this issue, the method proposes performing data splitting into blocks, and the empirical log-likelihood ratio is computed for each block of data.  We show that the vector composed of the empirical likelihood ratio test statistics obtained on each block of data behaves like a vector of independent chi-square random variables. Hence, different goodness-of-fit methods (\emph{e.g.,} Kolmogorov-Smirnov, maximum statistics, etc.) can be used to evaluate its distribution and, consequently, to test the relevance of the posterior classification probabilities.

The methodological contribution of this paper is to propose a validation method for model-based clustering, directly focusing on the posterior probabilities of classification. Developing such a method entails new theoretical advancements. Indeed, hypothesis testing procedures relying on empirical likelihood with a growing number of moment conditions generally consider the true parameters \citep{Hjort2009AOS,Peng}. Here, we extend this family of testing procedures by considering a consistent estimator of the model parameter, including the case of infinite-dimensional parameters. The extension of the empirical likelihood method we propose uses some elements of the goodness-of-fit procedure performed with parameter estimation as proposed by \citet{braun1980simple}. Furthermore, an extension of this procedure is proposed in this paper, as we do not restrict the situation to parametric distributions, thereby extending this family of approaches to infinite-dimensional parameters.  
Note that if a method allows for infinite-dimensional parameters, it requires a specific rate of convergence for its estimator. To the best of our knowledge, establishing the convergence rate in supremum norm for nonparametric mixture model estimators remains an open problem. While these rates are necessary to satisfy the method's theoretical requirements, their derivation lies beyond the scope of this paper. 
 
To paper is organized as follows. Section~\ref{sec:proc} describes the {goodness-of-fit procedure that investigates the relevance of model-based clustering results. Section~\ref{sec:theo} presents the theoretical guarantees of the procedure including the control of the level of the procedure. Section~\ref{sec:num} illustrates the relevance of the approach on numerical experiments. Section~\ref{sec:concl} gives a conclusion. The proofs are given in the Supplementary Material. 
 The proposed method is implemented in the R package GOFclustering available on CRAN.

\paragraph{Notations.}

Throughout the paper, we adopt the following notational conventions. The symbol $^{\star}$ always denotes the true value of a parameter under consideration: for instance, $\theta^{\star}_{\model}$ represents the true parameter under the model $\model$. The hat symbol indicates estimators computed from the observed sample: for example, $\widehat{\theta}_{\model,n}$ denotes an estimator of the true parameter $\theta^{\star}_{\model}$ computed from $n$ observations. 
For a matrix $A \in \mathcal{M}_p(\mathbb{R})$, we denote by $\|A\|_{sp}$ its spectral norm, by $\sigma_1(A)$ its smallest singular value, and by $\|A\|_{\max}$ the maximum absolute value of its entries.
Finally, we write $T^{\dagger}$ for the maximum over $b=1,\dots,B$ of the scalar magnitude of $T_b$, that is absolute value for real quantities, Euclidean norm for vectors, spectral norm for matrices.
For clarity, we also recall the stochastic order notation. For a sequence of random variables $X_n$ and a sequence of positive constants $a_n$, we write $X_n = O_p(a_n)$ if $X_n / a_n$ is bounded in probability, and $X_n = o_p(a_n)$ if $X_n / a_n \xrightarrow{\mathbb{P}} 0$ as $n \to \infty$. These conventions are used consistently throughout the paper.

\section{Goodness-of-fit testing of the conditional probabilities of classification}  \label{sec:proc}
\subsection{Conditional probabilities of classification}
The true distribution of the observed variables $\bX$ is specified by a particular model $\model^\star$ and a particular parameter $\btheta^\star=(\boldsymbol{\pi}^{\star},\bvartheta^{\star}_1,\ldots,\bvartheta^{\star}_{K})\in\Theta_{\model^\star}$ that defines the $K$-component mixture model with the probability density or mass function
$$g_{\model^\star,\btheta^\star}(\bx)=\sum_{k=1}^{K} \pi_k^\star f_{k}^\star(\bx;\bvartheta^{\star}_k),$$
where $\boldsymbol{\pi}^{\star}=(\pi_1^\star,\ldots,\pi_K^\star)$, $\pi_k^\star$ denotes the proportions of component $k$ with $0<\pi_k^\star\leq 1$ and $f_{k}^\star(\bx;\bvartheta^{\star}_k)$ denotes the  probability density or mass function  of component $k$ evaluated at $\bx$ with parameter $\bvartheta^{\star}_k$. 

  In model-based clustering,    the aim is to fit $c_{\model,\btheta,k}(\bx)$ that corresponds to the conditional probability that $\bU=k$ given $\bX=\bx$ obtained under the model defined by $\{\model,\btheta\}$, where $\bU\in\{1,\ldots,K\}$ indicates the cluster membership.  Note that it is standard to define a cluster as the subset of observations arising from the same component of the mixture model; in such a case, $\bU$ directly defines the cluster membership.   For any model $\model$ and parameter $\btheta$, we define $c_{\model,\btheta}(\bX)=(c_{\model,\btheta,1}(\bX),\ldots,c_{\model,\btheta,K}(\bX))^\top$ as the vector composed of the conditional probabilities of component memberships given $\bX$, specified by model $\model$ with parameter $\btheta$, leading that
\begin{equation}\label{eq:odds}
\forall \bx\in\mathcal{X},\; \forall k \in\{1,\ldots,K\},\; c_{\model,\btheta,k}(\bx) = \frac{\pi_k f_k(\bx;\bvartheta_k)}{\sum_{\ell=1}^K \pi_{\ell} f_{\ell}(\bx;\bvartheta_{\ell})}.
\end{equation}

For clarity, $c_{\model,\btheta}(\bx)$ denotes the mapping defined in \eqref{eq:odds}, while $c_{\model,\btheta}(\bX)$ denotes the corresponding random vector taking values in the simplex $S^K$. 

The aim of model-based clustering is to estimate the function giving the posterior probabilities of classifications $c_{\model,\btheta}$ from an observed sample composed of independent realizations of $\bX$. This estimation is achieved by selecting the best model and estimating its parameters from the observed sample. From the vector $c_{\model,\btheta}(\bx)$, a hard clustering can be achieved by assigning any $\bx$ to its most likely cluster (\emph{i.e.,} the component of $c_{\model,\btheta}(\bx)$ with the largest value). In addition, the uncertainty associated with this classification rule can be obtained by considering the probability masses of the components of $c_{\model,\btheta}(\bx)$ that differ from the cluster assignment.

\subsection{On the notion of well-specification of a distribution for clustering}

The distribution defined by the model $\model$ and the parameters $\btheta\in\Theta_{\model}$ is said to be \emph{well-specified for the data distribution} \citep{white1982maximum} if
\begin{equation}\label{eq:welldist}
\forall \bx\in\widetilde{\mathcal{X}},\;  g_{\model,\btheta}(\bx)=g_{\model^\star,\btheta^\star}(\bx),
\end{equation}
where $\widetilde{\mathcal{X}}\subseteq \mathcal{X}$ is equal to $\mathcal{X}$ up to a subspace of null measure. 
The notion of well-specification for clustering is often conflated with the well-specification of the data density. It is important to note that if a model is well-specified for the data distribution, then the probabilities of classification are well specified (\emph{i.e.,} $g_{\model,\btheta} = g_{\model^\star,\btheta^\star}$ implies $c_{\model,\btheta} \equiv c_{\model^\star,\btheta^\star}$ where $\equiv$ indicates that the equality holds up to a component labelling meaning that there exists a permutation of the coordinates of the vector such that the equalities holds for any $\bx$). However, the converse is not true as illustrated by the following example.
\begin{example}
\label{rem:uninformative}
 Suppose $\bX=(X_1,X_2)^\top$ has a density $g_{\model^\star,\btheta^\star}(\bx)=\sum_{k=1}^2 \frac{1}{2}\phi(x_1; (-1)^k,1) \mathds{1}_{\{-1/2\leq x_2\leq 1/2\}}$ where $\phi(y;m,v)$ denotes the normal density with mean $m$ and variance $v$ evaluated at $y$. And, consider the mixture model defined by $g_{\model,\btheta}(\bx)=\sum_{k=1}^2 \frac{1}{2}\phi(x_1; (-1)^k,1) \phi(x_2;0,1)$. Then, even that $g_{\model,\btheta}$ is not well-specified, we have $c_{\model,\btheta} = c_{\model^\star,\btheta^\star}$. 
\end{example}

This lack of equivalence highlights the primary advantage of our approach: drawing from the literature on probability calibration \citep{dawid1982well}, we focus on the distribution of $c_{\model,\btheta}(\bX)$ rather than on $\bX$ itself to directly evaluate the model's relevance for its intended clustering purpose. Furthermore, while the data $\bX$ may lie in a very high-dimensional or complex space, the posterior probabilities $c_{\model,\btheta}(\bX)$ always take values in the  simplex $S^K$. This dimensionality reduction ensures that our test remains focused on the classification structure, regardless of the nature or complexity of the input features.  
Since this paper aims at testing the relevance of the posterior probabilities of classification, we introduce the notion of weakly well-specification for soft-clustering (\emph{i.e.,} for the classification probabilities). Note that the definition is weaker than a strict equality $c_{\model^\star,\btheta^\star} =c_{\model,\btheta}$, justifying the term of weak well-specification,  while the term soft clustering aims to indicate that the target is the posterior probabilities of classification and not only the hard assignement.


\begin{definition}[Weakly well-specified for soft clustering]
A mixture model defined by $\{\model,\btheta\}$ is weakly well-specified for soft clustering if its posterior probabilities of component memberships $c_{\model,\btheta}(\bX)$ have the same distribution under  $\{\model,\btheta\}$ and under the true model  $\{\model^\star,\btheta^\star\}$, leading that 
\begin{equation}\label{eq:weakcl}
\forall \boldsymbol{c} \in S^K,\; \mathbb{P}_{\model,\btheta}(c_{\model,\btheta}(\bX)\leq \boldsymbol{c})=\mathbb{P}_{\model^\star,\btheta^\star}(c_{\model,\btheta}(\bX)\leq \boldsymbol{c}).
\end{equation}
\end{definition}

 The following example shows that the classification boundary can be well-specified even if the classification probabilities are not weakly well-specified.
 
\begin{example}
Suppose $\bX\in\mathbb{R}$ that follows a bi-component mixture of Student distributions $g_{\model^\star,\btheta^\star}(\bx) = \frac{1}{2}t_3(\bx;-1)+\frac{1}{2}t_3(\bx;1)$ and consider the Gaussian mixture $g_{\model,\btheta}(\bx) = \frac{1}{2}\phi(\bx;-1,1)+\frac{1}{2}\phi(\bx;1,1)$. Note that both models share the same decision boundary at $\bx=0$ due to symmetry, meaning that their MAP  (Maximum A Posteriori) assignments coincide (\emph{i.e.,} $\forall \bx \in\mathcal{X}, \; \argmax_k c_{\model,\btheta,k}(\bx) =\argmax_k c_{\model^\star,\btheta^\star,k}(\bx) $). However,  $c_{\model,\btheta} \not\equiv c_{\model^\star,\btheta^\star}$ since the posterior probabilities differ significantly in the tails: the Gaussian model underestimates the uncertainty for large values of $|\bx|$. In addition, as illustrated by Figure~\ref{fig:mispeficied}, $c_{\model,\btheta}(\bX)$ does not follow the same distribution under $\{\model,\btheta\}$ and $\{\model^\star,\stheta\}$ implying that $c_{\model,\btheta}$ is not weakly well-specified for soft clustering.

\begin{figure}
    \centering
    \includegraphics[width=0.6\linewidth]{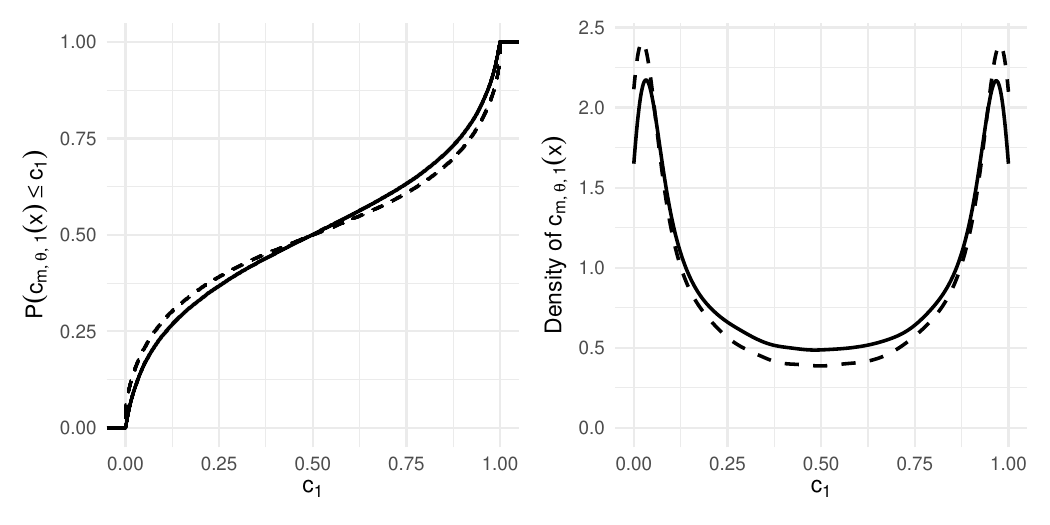}
 \caption{Illustration of weak misspecification for soft clustering. 
    \textbf{Left:}  Cumulative Distribution Function  of the posterior probability $c_{\model,\btheta,1}(\mathbf{X})$. 
    \textbf{Right:} Corresponding density functions. 
    Solid lines represent the distribution of the $c_{\model,\btheta,1}(\mathbf{X})$ under the assumed model $(\model, \btheta)$ (Gaussian mixture model), while dashed lines represent the distribution of $c_{\model,\btheta,1}(\mathbf{X})$ under the true model $(\model^\star, \btheta^\star)$ (Student mixture model).}
    \label{fig:mispeficied}
\end{figure}
\end{example}

While the definition of a weakly well-specified model for soft clustering is formulated using the cumulative distribution function for the sake of generality, our testing procedure is based on moment conditions. We favor this approach because, unlike the CDF which can difficult to compare on the simplex $S^K$ for $K>2$, the moments of the posterior probabilities $c_{\model,\btheta}(\mathbf{X})$ can be effectively evaluated and compared using the empirical likelihood framework, regardless of the number of components.
 Noting that conditional probabilities of classification given $\bX$ are defined in the simplex of size $K$, we consider $\mathcal{E}$ as the set of bounded continuous function defined on $S^K$. Therefore, \eqref{eq:weakcl} is equivalent to have
$$\forall \varphi \in \mathcal{E},\,  \mathbb{E}_{g_{\model,\stheta}}[\varphi( c_{\model,\stheta}(\bX))]=\mathbb{E}_{g_{\model^\star,\btheta^\star}}[\varphi( c_{\model,\stheta}(\bX))],$$
where $\mathbb{E}_{g_{\model,\btheta}}[\varphi( c_{\model,\stheta}(\bX))]=\int_{\mathcal{X}} \varphi( c_{\model,\stheta}(\bx)) g_{\model,\theta}(\bx)d\bx $. 
Let $\psi_{\model,\btheta,\varphi}$ be the function defined for any $\varphi\in \mathcal{E}$ by
$
\psi_{\model,\btheta,\varphi}(\bX) =\varphi(  c_{\model,\stheta}(\bX)) - \mathbb{E}_{g_{\model,\btheta}}[\varphi( c_{\model,\stheta}(\bX)) ]$,
 \eqref{eq:weakcl} is equivalent to have
\begin{equation}\label{eq:nullgr}
\forall \varphi \in \mathcal{E},  \mathbb{E}_{g_{\model^\star,\btheta^\star}}[\psi_{\model,\btheta,\varphi}(\bX)] =0.
\end{equation}

Condition \eqref{eq:nullgr} is precisely what our procedure aims to test.  
Hence, the goodness-of-fit procedure we propose relies on the equality of the functional moments of the conditional probabilities of classification given $\bX$, taken with respect to $g_{\model^\star,\btheta^\star}$ and $g_{\model,\stheta}$.

\subsection{Hypothesis testing for the goodness-of-fit testing procedure of clustering}
If \eqref{eq:nullgr} holds for any $\{\model, \btheta\}$, in practice we consider it with estimator $\wtheta$ that converges in probability to $\stheta$, where $\stheta$ is the parameter minimizing the loss function associated with the clustering step for a given model $\model$.  This loss function is defined as:
$$
\mathcal{L}(\btheta,\model;\btheta^\star,\model^\star) = \mathbb{E}_{g_{\model^\star,\btheta^\star}}[ \zeta(\bX;\btheta,\model,\btheta^\star,\model^\star)],
$$
for some suitable scoring function $\zeta$.  Hence, the estimator $\wtheta$ of $\stheta$ minimizes the empirical loss considered during the clustering step. Considering the random sample $\sample_n = (\bX_1^\top, \ldots, \bX_n^\top)^\top$ composed of $n$ independent copies of $\bX$, where $\bX$ is drawn from $\{\model^\star, \btheta^\star\}$, we have
$$
\wtheta = \argmin_{\btheta \in\Theta_\model} \frac{1}{n} \sum_{i=1}^n\zeta(\bX_i;\btheta,\model,\btheta^\star,\model^\star).
$$
 For instance, when maximum likelihood estimation is conducted, the loss is the Kullback-Leibler divergence between $g_{\model^\star,\btheta^\star}$ and $g_{\model,\btheta}$ defined with $\zeta(\bx;\btheta,\model,\btheta^\star,\model^\star)=\ln g_{\model^\star,\btheta^\star}(\bx) - \ln g_{\model,\btheta}(\bx)$, and $\stheta$ minimizes this function with respect to $\btheta$ in $\Theta_{\model}$. Alternatively, in the case of semi-parametric and non-parametric estimation, the loss function can be the $L_p$ distance (see \citet{HettmanspergerJRSSB2000} for mixtures of symmetric distributions) or the penalized Kullback–Leibler divergence defined using smoothing operators (see \citet{LevineBiometrika2011} for mixtures of univariate densities). To allow the estimation of  $\stheta$  for performing clustering, its uniqueness needs to be assumed, as well as the estimation of the model parameters being conducted according to the specific loss. Hence, this is not an additional requirement introduced by the proposed method for investigating the clustering output but rather an assumption already made during the parameter estimation performed in the clustering step.
{ In our numerical experiments, the estimator $\wtheta$ is typically obtained via the EM algorithm, which consistently minimizes the negative log-likelihood (and thus the Kullback-Leibler divergence) in the parametric case. We observed that the estimators reached the expected $\sqrt{n}$-consistency, and the resulting estimation error remained sufficiently small such that the block-splitting strategy effectively handled the nuisance parameter uncertainty, maintaining a stable Type I error across all scenarios in the parametric case.}

The aim of this paper is to propose a procedure to investigate whether $\{\model,\stheta\}$ is weakly well-specified for soft clustering. 
Hence, based on  \eqref{eq:nullgr} and on the convergence of $\wtheta$ to $\stheta$, the testing procedure considers the null hypothesis
\begin{equation}\label{eq:null}
\mathcal{H}_0:\,\forall \varphi \in \mathcal{E},  \mathbb{E}_{g_{\model^\star,\btheta^\star}}[\psi_{\model,\stheta,\varphi}(\bX)] =0,
\end{equation}
and the alternative hypothesis
$
\mathcal{H}_1:\, \exists \varphi \in \mathcal{E},  \mathbb{E}_{g_{\model^\star,\btheta^\star}}[\psi_{\model,\stheta,\varphi}(\bX)] \neq 0. 
$

For the set of basis functions $\mathcal{E}$, indicator functions or multivariate Bernstein polynomials can be considered. Note that for any $\varphi$, it is easy to compute the empirical counterpart of the moment defined by the null hypothesis from an observed sample composed of independent observations drawn from $\{\model^\star,\btheta^\star\}$. To incorporate the null hypothesis into a testing procedure, two challenges must be addressed. First, the expectation defining the null hypothesis involves an infinite number of functions $\varphi$, whereas only a finite number of moment conditions can be tested in practice. Second, the parameter $\stheta$ is unknown, and we only have access to its estimator $\wtheta$.



 \subsection{Empirical likelihood for goodness-of-fit with an estimator of the  parameters}
 We aim to examine whether $\{\model, \stheta\}$ is weakly well-specified for soft clustering, by using the null hypothesis defined by \eqref{eq:null}. This examination needs to be conducted by considering that $\stheta$ is unknown and that an estimator $\wtheta$ have been computed on the observed sample. Furthermore, we consider $p$ functions from $\mathcal{S}^K$ to $\mathbb{R}$, denoted as $\varphi_{p,1}, \ldots, \varphi_{p,p}$, to construct the $p$-dimensional vector
$\Psi_{\model,p}(\bX;\btheta) = \begin{bmatrix}
    \psi_{\model,\btheta,\varphi_{p,1}}(\bX),
    \ldots, 
    \psi_{\model,\btheta,\varphi_{p,p}}(\bX) 
\end{bmatrix}^\top$.
Under the null hypothesis defined by \eqref{eq:null}, we have
\begin{equation}\label{eq:momentcond}
\mathbb{E}_{g_{\model^\star,\btheta^\star}}[\Psi_{\model,p}(\bX;\stheta) ]=\bzero_p,
\end{equation}
where $\bzero_p$ is the vector of zeros of length $p$.  For any $\btheta$, the empirical likelihood is defined by
$$
L_{\model,p}(\btheta; \sample_n) = \max_{\xi_{\model,p,1},\ldots,\xi_{\model,p,n}}\prod_{i=1}^n \xi_{\model,p,i}(\btheta ),
$$
under the following constraints 
$$\xi_{\model,p,i}(\btheta )\geq 0, \, \sum_{i=1}^n\xi_{\model,p,i}(\btheta )=1 \text{ and } \sum_{i=1}^n \xi_{\model,p,i}(\btheta )\Psi_{\model,p}(\bX_i;\btheta)=\bzero_p,$$ this later being the empirical counterpart of the condition stated by \eqref{eq:momentcond}. 

It is worth noting that since the model parameters  are fixed at this stage of the procedure, the maximization of the empirical likelihood is performed only with respect to the Lagrange multipliers $\boldsymbol{\lambda}$. This leads to a dual optimization problem that is strictly convex. In practice, this optimization is efficiently and stably solved using a Newton-Raphson algorithm, avoiding the numerical instabilities often encountered when empirical likelihood is maximized simultaneously over complex model parameters such as mixture weights and components. Under this dual formulation, we have,

$$\xi_{\model,p,i}(\btheta)^{-1} = n[1 + \lambda_{\model,p}(\btheta)^\top \Psi_{\model,p}(\bX_i;\btheta )],$$ 
where $\lambda_{\model,p}(\btheta)\in\mathbb{R}^{p}$ are the Lagrange multipliers. The empirical log-likelihood ratio is then defined by
$$\mathcal{R}_{\model,p}(\btheta; \sample_n) = \sum_{i=1}^n \ln \left(1 + \lambda_{\model,p}(\btheta )^\top\Psi_{\model,p}(\bX_i;\btheta)\right).$$

Considering a parametric model (\emph{i.e.,} a finite-dimensional parameter space $\Theta_{\model}$) and a fixed number of equations $p$, under mild assumptions, \citet{owen2001empirical} shows that under the null distribution, the statistic $2\mathcal{R}_{\model,p}(\stheta; \sample_n)$ converges to a chi-square distribution with $p$ degrees of freedom. 
Furthermore, if $\Theta_\model \subseteq \mathbb{R}^r$, and if the empirical likelihood $L_{\model,p}(\btheta; \sample_n)$ is maximized over $\btheta \in \Theta_\model$, the clustering model can be tested using the observed sample. In this parametric framework,
\citet[Corollary~4]{QinAOAS1994} shows that, under mild regularity conditions and when
$p>r$, twice the empirical likelihood ratio evaluated at this maximizer converges in
distribution to a chi-square random variable with $p-r$ degrees of freedom. Consequently,
inference on the clustering model can be conveniently carried out using empirical
likelihood methods, provided that the maximization of
$\mathcal{R}_{\model,p}(\btheta; \sample_n)$ over $\Theta_\model$ is tractable. However,
in parametric mixture models, parameter estimation is typically performed by maximizing
the log-likelihood function via the EM algorithm, and the resulting estimator generally
differs from the maximizer of $\mathcal{R}_{\model,p}(\btheta; \sample_n)$. This
discrepancy may render the direct maximization of the empirical likelihood ratio difficult
in practice. Moreover, many clustering procedures are based on semi-parametric mixture
models, in which case the parameter space includes infinite-dimensional components. As a
result, the supremum of $\mathcal{R}_{\model,p}(\btheta; \sample_n)$ over $\btheta$ is no
longer covered by the asymptotic result of \citet[Corollary~4]{QinAOAS1994}.

Finally, considering only a fixed number $p$ of equations may not allow the detection of all alternatives.  Indeed \eqref{eq:null} implies \eqref{eq:momentcond} but there is no equivalence between both equations as illustrated by the following example.
\addtocounter{example}{-1}

\begin{example}[continued]
Since this example considers two components and $c_{\model,\stheta}(\bX)$ lies on the simplex, the functions $\psi_{p,j}$ can only consider the first classification probability $c_{\model,\stheta,1}(\bX)$. Suppose $\psi_{p,j}(\boldsymbol{c}) = \mathds{1}_{c_1 \leq b_{p,j}}$, where the thresholds $b_{p,j}$ partition $[0,1]$ into equal intervals (e.g., $b_{1,1}=1/2$, $b_{2,1}=1/3$, $b_{2,2}=2/3$). 
For $p=1$, the testing procedure cannot reject the null hypothesis because
$
\mathbb{E}_{g_{\model,\stheta}}[\varphi_{1,1}(c_{\model,\stheta}(\bX))] = \mathbb{E}_{g_{\model^\star,\btheta^\star}}[\varphi_{1,1}(c_{\model,\stheta}(\bX))] = 1/2,
$
despite the fact that the model is not weakly well-specified. However, it is able to reject the null hypothesis when $p \geq 2$ since
$
\mathbb{E}_{g_{\model,\stheta}}[\varphi_{p,1}(c_{\model,\stheta}(\bX))] < \mathbb{E}_{g_{\model^\star,\btheta^\star}}[\varphi_{p,1}(c_{\model,\stheta}(\bX))].
$ 
This is a direct consequence of the fact that $\mathbb{P}_{\model,\btheta}(c_{\model,\btheta,1}(\bX) \leq c_1) < \mathbb{P}_{\model^\star,\btheta^\star}(c_{\model,\btheta,1}(\bX) \leq c_1)$ for any $0<c_1 < 1/2$ (see the left part of Figure~\ref{fig:mispeficied}).
\end{example}

Therefore, to allow the detection of all alternatives, it is crucial to let $p$ increase as the sample size $n$ tends to infinity. Empirical likelihood has already been explored in the context of a growing number of equations by \citet{Hjort2009AOS} and \citet{Peng}. By using a chi-square approximation of $2\mathcal{R}_{\model,p}(\stheta; \sample_n)$ and noting that a normalized chi-square random variable with $p$ degrees of freedom converges to a standard Gaussian distribution, these works show that under mild assumptions, $(2\mathcal{R}_{\model,p}(\stheta; \sample_n) - p)/\sqrt{2p}$ converges in distribution to a standard Gaussian distribution under the null hypothesis. However, these results are not directly applicable when $\stheta$ is unknown and replaced by an estimator $\wtheta$ computed from the observed sample $\sample_n$.

We propose a goodness-of-fit procedure to assess whether $g_{\model,\stheta}$ is strongly well-specified for clustering when $\stheta$ is unknown and replaced by an estimator $\wtheta$. 
This procedure is based on $p$ moment equations, where the number of equations increases with the sample size, meaning that $p$ is a function of $n$. It can be seen as an extension of the goodness-of-fit procedure proposed by \citet{braun1980simple}. However, unlike the framework considered in \citet{braun1980simple}, we allow $\btheta$ to have an infinite-dimensional component and consider vectors with an increasing dimension. This testing procedure begins by splitting the original sample $\sample_n$ into $B$ sub-samples $\sample^{(1)},\ldots, \sample^{(B)}$, such that each observation is assigned to exactly one sub-sample. Each sub-sample $\sample^{(b)}$ contains $n_b$ observations. The sizes $n_1,\ldots,n_{B}$ of the sub-samples $\sample^{(1)},\ldots, \sample^{(B)}$ and the number of sub-samples $B$ increase with the sample size at rates specified in the next section, making them functions of $n$. For each sub-sample $\sample^{(b)}$, with $1\leq b\leq B$, we compute the statistic $Y_{\est}$, defined as twice the empirical likelihood ratio evaluated at $\btheta=\wtheta$ and computed on sample $\sample^{(b)}$, such that for any $\btheta$, we have
\begin{equation}\label{Sec2: Yany}
Y_{\any}=2\mathcal{R}_{\model, p}(\btheta; \sample^{(b)})    .
\end{equation}

Finally, the expectation $\mathbb{E}_{g_{\model,\btheta}}[\varphi( c_{\model,\btheta}(\bX)) ]$ is generally not explicit, since the distribution of the posterior classification probabilities is unknown in closed form, as the mapping $\bx \mapsto c_{\model,\btheta}(\bx)$ is rarely one-to-one. However, it can be derived in simple cases as shown by the following example. When the the expectation $\mathbb{E}_{g_{\model,\btheta}}[\varphi( c_{\model,\btheta}(\bX)) ]$ is not explicit, it can be approximated numerically using Monte Carlo simulations, since it is easy to generate observations from a mixture model. Note that the accuracy of the approximation only depends on the number of simulations, and thus the user can choose a sufficiently large number of replications to make the approximation error negligible as a consequence of Theorem~\ref{thm:alternative}. 

\begin{example}[Two-component homoscedastic multivariate Gaussian mixture]
Consider $\bX \in \mathbb{R}^d$ following the mixture density $g_{\model,\btheta}(\bx) = \pi_1 \phi(\bx; \boldsymbol{\mu}_1, \boldsymbol{\Sigma}) + \pi_2 \phi(\bx; \boldsymbol{\mu}_2, \boldsymbol{\Sigma})$, where $\boldsymbol{\Sigma}$ is a common positive definite covariance matrix. The first posterior probability $c_{\model,\btheta,1}(\bx)$ can be written in the form of a logistic function:
$c_{\model,\btheta,1}(\bx) = 1/(1 + \exp(-(\ba^\top \bx + b)))$, 
where $\ba = \boldsymbol{\Sigma}^{-1}(\boldsymbol{\mu}_1 - \boldsymbol{\mu}_2)$ and $b = \ln(\pi_1/\pi_2) - \frac{1}{2}\boldsymbol{\mu}_1^\top \boldsymbol{\Sigma}^{-1} \boldsymbol{\mu}_1 + \frac{1}{2}\boldsymbol{\mu}_2^\top \boldsymbol{\Sigma}^{-1} \boldsymbol{\mu}_2$. Let $W = \ba^\top \bX + b$ be the discriminant score. Since $\bX$ follows a mixture of two Gaussians, $W$ follows a univariate Gaussian mixture $\pi_1 \mathcal{N}(m_1, s^2) + \pi_2 \mathcal{N}(m_2, s^2)$ with $m_k = \ba^\top \boldsymbol{\mu}_k + b$ and $s^2 = \ba^\top \boldsymbol{\Sigma} \ba = (\boldsymbol{\mu}_1 - \boldsymbol{\mu}_2)^\top \boldsymbol{\Sigma}^{-1} (\boldsymbol{\mu}_1 - \boldsymbol{\mu}_2)$. The cumulative distribution function of $c_{\model,\btheta,1}(\bX)$ for $c_1 \in (0,1)$ is then explicitly given by:
$
\mathbb{P}(c_{\model,\btheta,1}(\bX) \leq c_1) = \pi_1 \Phi\left( \frac{\text{logit}(c_1) - m_1}{s} \right) + \pi_2 \Phi\left( \frac{\text{logit}(c_1) - m_2}{s} \right)$, 
where $\text{logit}(u) = \ln(u/(1-u))$ and $\Phi$ is the CDF of the standard normal distribution. While this explicit form exists for this fundamental case, it becomes analytically intractable as soon as the covariance matrices differ or the number of components increases. Our procedure addresses this by using Monte Carlo simulations to approximate this distribution for any arbitrary mixture model.
\end{example}

\section{Test statistics obtained from the empirical likelihood ratio tests} \label{sec:theo} 
 
\subsection{Controlling the asymptotic distribution of the empirical likelihood ratio of one block}

In this section, we show that each of the $B$ random variables $Y_{\est}$ can be treated as arising from independent chi-squared variables with $p$ degrees of freedom, under the null hypothesis.  This distributional result serves as the foundation for the proposed test statistics used to validate the clustering methodology. To prove this result, stated in Theorem~\ref{thm:niveau}, we require some assumptions, which we now briefly discuss before stating them formally. 

Ensuring the convergence in distribution of any empirical likelihood ratio requires assumptions on the covariance matrix
 $$\bSigma_{\model,p}= \mathbb{E}[\Psi_{\model,p}(\bX;\stheta)\Psi_{\model,p}(\bX;\stheta)^\top].$$
 When $p$ is fixed, the usual assumption  is that the singular values of $\bSigma_{\model,p}$  are strictly positive (see the assumptions in \citet[Lemma~1]{QinAOAS1994}). In this paper, since $p$ increases with the sample size, we consider the following extension of this assumption, already introduced in \citet[condition (D6)]{Hjort2009AOS} and described in Assumption~\ref{ass:main}-\ref{ass:cov}. 
 
The other assumptions are required to account for a growing number of equations. Indeed, the null hypothesis stated in \eqref{eq:null} considers an infinite number of functions $\varphi$, whereas the moment conditions defined in \eqref{eq:momentcond} consider $p$ functions. Therefore, we impose that $p$ tends to infinity as $n$ tends to infinity in order to be able to detect any alternative. In addition, ensuring the convergence of the empirical covariance matrix to $\bSigma_{\model,p}$ is achieved by controlling the $q_0$-th order moment of $\psi_{\model,\wtheta,\varphi_{p,j}}(\bX)$ for any $(p,j)$, with $q_0\geq 4$. Since $\stheta$ is unknown and is replaced by $\wtheta$, the impact of replacing $\Psi_{\model,p}(\bx;\stheta)$ with $\Psi_{\model,p}(\bx;\wtheta)$ must be controlled. Such control has been established in \citet[Section A.2]{ChaumarayAOS} uniformly on $\bx$ in the case of a semi-parametric regression model. Obviously, this control depends on the rate of convergence, in probability, of $\wtheta$ to $\stheta$. This rate depends on the estimation procedure performed during the clustering step and cannot be improved. In \cite{butucea2014semiparametric}, the authors constructed a deconvolution estimator of $\stheta$ and give a pointwise rate of convergence. To the best of our knowledge, uniform convergence results are not yet available in the nonparametric setting. Since our block construction depends on this convergence rate, we formulate the methodology in terms of a generic rate.
This allows the proposed procedure and theoretical results to apply in a general framework, covering both parametric and nonparametric models,  Nonetheless, the nonparametric case requires specific convergence rate results which, at present, remain unavailable in the literature.  Therefore, we adapt the procedure according to this rate of convergence by making assumptions on the growth of the size of each subsample and the number of subsamples. In particular, in Assumption~\ref{ass:main}-\ref{ass:growing1} we assume that the size of each block tends to infinity at the same rate of order  $n^\rho$, leading to the number of blocks $B$ tending to infinity at a rate of order $n^{1-\rho}$. This requirement has already been made by \citet{braun1980simple} to perform a goodness-of-fit procedure based on Kolmogorov-Smirnov or Cramér-von Mises statistics. Note that this assumption is not restrictive, as it only imposes conditions on the growth of $n_b$'s and $B$. However, satisfying this assumption requires knowledge of the rate of convergence of the estimator $\wtheta$. All these conditions are stated in Assumption~\ref{ass:main}.

\begin{assumptions} \label{ass:main} 
\leavevmode\par
\begin{enumerate}
\item \label{ass:cov} For any $p$, all singular-values of  $\bSigma_{\model,p}$ are upper bounded by $\sigma$ and lower-bounded by $\varsigma$ with $\varsigma>0$ and $\sigma<\infty$.
    \item \label{ass:cvtheta1} $\stheta$ is the unique minimizer of $\mathcal{L}(\btheta,\model;\btheta^\star,\model^\star)$ with respect to $\btheta\in\Theta_{\model}$.
\item \label{ass:cvtheta2} There exists $\tau$ such that $\tau>1/3$ and for any $(p,j)$, $\max_{1\leq i \leq n} | \psi_{\model,\wtheta,\varphi_{p,j}}(\bX_i) -  \psi_{\model,\stheta,\varphi_{p,j}}(\bX_i)|=O_\mathbb{P}(n^{-\tau})$,
\item \label{ass:growing1} There exists $\rho$ with $2/3<\rho<2\tau$ such that for any $1\leq b\leq B$, $\lim_{n\to\infty} n_b n^{-\rho}=1$ 
    \item \label{ass:growing2}  There exists an integer $q_0\geq 4$, and two positive reals $r_0$ and $\tilde C$ such that for any $(p,j)$, we have $\mathbb{E}_{g_{\model^\star,\theta^\star}} | \psi_{\model,\btheta,\varphi_{p,j}}(\bX)|^{q_0}$ is upper-bounded by $\tilde C p^{r_0}$.
    \item \label{ass:growing3} There exists $0<\kappa< (\rho/6 - 1/6q_0)/(1+r_0/q_0)$ such that $p=[n^{\kappa}]$
 \end{enumerate}
\end{assumptions}

\begin{remark} : Before proceeding further, we discuss some implications of the assumptions stated above. We then include a section on practical implementation to guide the reader in choosing suitable basis functions and other components of our procedure so that the assumptions are satisfied and Theorems \ref{thm:niveau} and \ref{thm:alternative} remain valid.
\begin{itemize}
    \item From Assumption~\ref{ass:main}-\ref{ass:cvtheta2}, we have
$
\sup_{1\leq i\leq n} \|\Psi_{\model,p}(\bX_i;\stheta) -  \Psi_{\model,p}(\bX_i;\wtheta)\|_2=O_\mathbb{P}(n^{-\tau}p^{1/2})$. 

\item From Assumption~\ref{ass:main}-\ref{ass:cvtheta2} up to \ref{ass:main}-\ref{ass:growing3}, the number of equations $p$ tends to infinity as $n_b$ tends to infinity at a rate that satisfies $p^{6}n_b^{-1} = o(1)$ and $n^{-\tau}p^{6}=o(1)$. 

\item Assumption~\ref{ass:main}-\ref{ass:growing2} and Jensen inequality applied to the function $u\mapsto u^2$ imply that
$
\|\Psi_{\model,p}(\bX_i^{(b)};\stheta)\|_2^{q_0} \leq p^{q_0/2-1} \sum_{j=1}^p |\psi_{\model,\stheta,\varphi_{p,j}}(\bX_i^{(b)})|^{q_0}$. Hence, we have
\begin{equation} \label{eq:moment}
  \mathbb{E}[\|\Psi_{\model,p}(\bX_i^{(b)};\stheta)\|_2^{q_0}] \leq\tilde{C} p^{q_0/2+r_0}.
\end{equation}
\item If more restrictive conditions are assumed, that is Assumption~\ref{ass:main}-\ref{ass:growing2} holds for more moments, meaning that the growth condition is close to the case of bounded variables then the rate of $p$ is as large and close to $n^{\rho/6}$. This rate is slower than the one obtained in \cite{Hjort2009AOS}, that is $p=o(n^{1/3})$ when $\rho=1$. However, the study of the empirical likelihood with growing dimension conducted in this later does not incorporate the estimation of the parameter $\stheta$ and therefore does not have to handle the negligibility of this error, nor the management of the blocks to account for it.

\item The basis functions considered in this paper allow, by their properties, Assumption~\ref{ass:main}-\ref{ass:cvtheta2} to be verified if for instance the function $c_{\model,\btheta}(\bx)$ is Lipschitz w.r.t $\btheta$ uniformly in $\bx$ which happens in the parametric case as long as  $c_{\model,\btheta}(\bx)$ is continuously differentiable in $\btheta$, and there exists a constant $M>0$ such that the norm of the gradient $\nabla_{\btheta}c_{\model,\btheta}(\bx)$ is bounded by $M$ uniformly over $\bx$. 

\item 
Note that Assumptions~\ref{ass:main}-\ref{ass:cov} up to \ref{ass:main}-\ref{ass:cvtheta2}, and \ref{ass:main}-\ref{ass:growing2} regarding the basis functions $\psi$ and the model regularity are verifiable by construction, as they depend on the user's choice of basis (\emph{e.g.,} Bernstein polynomials) and the use of standard mixture families. While the asymptotic growth rates in Assumptions~\ref{ass:main}-\ref{ass:growing1} and  \ref{ass:main}-\ref{ass:growing3} cannot be directly checked on a finite sample, their practical impact is extensively addressed through the sensitivity analysis in Section 5.1. Our results show that the procedure is robust to mild deviations in parameter tuning, provided the number of observations per block $n_b$ is sufficient to ensure stable parameter estimation.

\end{itemize}
\end{remark}

\begin{theorem}\label{thm:niveau} 
If Assumptions~\ref{ass:main} hold true then, under the null hypothesis  stated by \eqref{eq:null}, the supremum of difference cumulative distribution functions of  the $B$-dimensional vector $Y_{\estall}=(Y_{\estall, 1},\ldots,Y_{\estall , B})^\top$ and the  cumulative distribution functions of a vector of $B$ independent chisquare random variables with $p$ degrees of freedom tends to zero meaning that
$$
\lim_{n\to \infty}   \left\| F_{Y_{\estall}} - \prod_{b=1}^B F_{\mathcal{X}^2_p}\right\|_\infty = 0,
$$
where $F_{Y_{\estall}} $ and $F_{\mathcal{X}^2_p}$ denote  the cumulative distribution function of the $B$-dimensional vector $Y_{\estall}$ and of a chi-square random variable with $p$ degrees of freedom respectively.
\end{theorem}

We provide a sketch of the proof of Theorem~\ref{thm:niveau}. The full proof is postponed in Appendix~\ref{sec:appproof}. 
\begin{proof}[Sketch of Proof of Theorem~\ref{thm:niveau}]
The first part consists in generalizing the results stated in \cite{owen2001empirical} to the case of growing dimension and nuisance parameters and providing more accurate stochastic orders of the Taylor remainder terms in order to be able to show that
\begin{equation}\label{Sec3:eq1}
\left(Y_{\estall}-W_{\trueb}\right)^{\dagger}=o_\mathbb{P}(1)    
\end{equation}
where $Y_{\est}$  is defined in \eqref{Sec2: Yany} and $W_{\true}$ 
given by
\begin{equation}\label{Sec3: W}
W_{\true}=\bZ_{\true}^\top\bSigma_{\model,p}^{-1}  \bZ_{\true},  
\end{equation}
where \begin{equation}\label{Sec3:Zany}
\bZany =n_b^{-1/2} \sum_{i=1}^{n_b}  \Psi_{\model,p}(\bX_i^{(b)};\btheta  ).
\end{equation}A convergence in distribution of $(2p)^{-1/2} (W_{\true} -p)$ towards the standard gaussian distribution is already established in \cite{Peng}. However, this convergence is not easily manageable, as we have to deal with the maximum over $B$ statistics to circumvent the issue arising from the use of an estimator of the model parameters. Under Assumptions~\ref{ass:main} and technical lemmas proved in Section \ref{B.2}, \eqref{Sec3:eq1} is established.

The second part of the proof consists in showing that $W_{\trueb}=(W_{\trueb,1},\ldots,W_{\trueb,B})^\top$ behaves like a $B$ dimensional vector composed of $B$ independent chi-square random variables with $p$ degrees of freedom, when $n$ is large. This last point is proved under a Berry-Esseen bound \citep{bentkus2003dependence}.

\end{proof}

\subsection{Test statistics obtained from the asymptotic distribution of the empirical likelihood ratio of one block} \label{subsec:allstat}

Based on the results established in Theorem~\ref{thm:niveau}, we achieve asymptotic control over the distribution of the $B$-dimensional vector $Y_{\estall}$. Specifically, under the null hypothesis $\mathcal{H}_0$, the components of $Y_{\estall}$ are asymptotically independent and identically distributed according to a chi-square distribution with $p$ degrees of freedom. This theoretical guarantee allows us to construct global goodness-of-fit tests by considering various functional transformations of this vector.

To assess the relevance of the model-based clustering, we first consider two aggregate measures: the sum statistic, which captures global misspecifications across all blocks, and the maximum statistic, which is specifically sensitive to local discrepancies. However, these summary statistics may overlook subtle but systematic deviations in the distribution of the log-likelihood ratios. Therefore, we also propose leveraging the full information contained in $Y_{\estall}$ by performing goodness-of-fit (GOF) tests that compare the empirical distribution of the sample $\{Y_{\estall, b}\}_{b=1}^{B}$ to the theoretical $\chi^2_p$ law. Among these, the Kolmogorov-Smirnov, Cramér-von Mises, and Anderson-Darling tests are particularly suitable for evaluating the entire shape of the distribution. In our numerical experiments, this strategy of performing a GOF test on the components of $Y_{\estall}$ consistently yielded the best results in terms of power and level control.

The five considered statistics are the sum statistic, the maximum statistic, the Kolmogorov-Smirnov statistic, the Cramér-von Mises statistic and the Anderson-Darling statistic defined respectively by:
$$\bar{Y}_{\estall} = \sum_{b=1}^{B} Y_{\estall, b},\, 
  Y_{\estall}^{\dagger} = \max_{1 \le b \le B} Y_{\estall, b},$$ 
 $$\texttt{KS}[Y_{\estall}] = \sup_{y} |\hat{F}_{B}(y) - F_{\chi^2_p}(y)|,\,
 \texttt{CvM}[Y_{\estall}] = B \int_{-\infty}^{\infty} (\hat{F}_B(y) - F_{\chi^2_p}(y))^2 dF_{\chi^2_p}(y) $$
 and
 $$
\texttt{AD}[Y_{\estall}] = B \int_{-\infty}^{\infty} \frac{(\hat{F}_B(y) - F_{\chi^2_p}(y))^2}{F_{\chi^2_p}(y)(1 - F_{\chi^2_p}(y))} dF_{\chi^2_p}(y),$$
where $\hat{F}_{B}(y) = \frac{1}{B} \sum_{b=1}^{B} \mathds{1}_{\{Y_{\estall, b} \leq y\}}$ is the empirical cumulative distribution function and $F_{\chi^2_p}$ is the cumulative distribution function of a chi-square distribution with $p$ degrees of freedom.
The following corollary formalizes the asymptotic distributions and defines the rejection regions at a significance level $\alpha$ for each statistic. 
\begin{corollary}\label{cor:gof_stats}
Under the null hypothesis $\mathcal{H}_0$ and the assumptions of Theorem~\ref{thm:niveau}, as $n \to \infty$, the rejection regions at level $\alpha$ are defined as follows:
\begin{itemize}
    \item For the sum statistic:
    $\mathcal{K}_{\alpha}^{\text{sum}} = \left\{ \bar{Y}_{\estall} > q_{1-\alpha}(\chi^2_{pB}) \right\}$, 
    where $q_{1-\alpha}(\chi^2_{p B})$ is the $(1-\alpha)$-quantile of the chi-square distribution with $p B$ degrees of freedom.

    \item For the maximum statistic:
    $\mathcal{K}_{\alpha}^{\max} = \left\{ Y_{\estall}^{\dagger} > q_{(1-\alpha_B)}(\chi^2_{p}) \right\}$, 
    where $q_{1-\alpha}(\chi^2_{p})$ is the $(1-\alpha)$-quantile of the chi-square distribution with $p$ degrees of freedom and $\alpha_B=1-(1-\alpha)^{1/B}$.

    \item For the Kolmogorov-Smirnov statistic:
    $\mathcal{K}_{\alpha}^{KS} = \left\{ \sqrt{B} \texttt{KS}[Y_{\estall}]  > K_{\alpha} \right\}$, 
    where $K_{\alpha}$ is the $(1-\alpha)$-quantile of the Kolmogorov distribution.

    \item For the Cramér-von Mises statistic:
    $\mathcal{K}_{\alpha}^{CvM} = \left\{ \texttt{CvM}[Y_{\estall}] > cvm_{\alpha} \right\}$, 
    where $cvm_{\alpha}$ is the $(1-\alpha)$-quantile of the Cramér-von Mises distribution.

    \item For the Anderson-Darling statistic:
    $\mathcal{K}_{\alpha}^{AD} = \left\{ \texttt{AD}[Y_{\estall}] > ad_{\alpha} \right\}$, 
    where $ad_{\alpha}$ is the $(1-\alpha)$-quantile of the Anderson-Darling distribution.
\end{itemize}
\end{corollary}

\subsection{Local alternative}
To study the power properties of the testing procedure, we consider a local alternative defined as a perturbation of the moment conditions. This approach is particularly relevant in the context of model-based clustering for two reasons. First, it allows us to account for the potential bias introduced by the numerical approximation of the expectation $\mathbb{E}_{g_{\model,\btheta}}[\varphi( c_{\model,\btheta}(\bX)) ]$, which is generally not explicit and must be estimated via Monte Carlo methods. Second, it characterizes the sensitivity of the test to local misspecifications of the modeling of the posterior probabilities of classification. Under this framework, we consider a perturbation of the moment condition defined, for any $\Upsilon\in\mathbb{R}^p$ with $\|\Upsilon\|_2=1$, by:
$$
\Psi_{\model,p}^{\alt}(\bX_i^{(b)};\stheta)=\Psi_{\model,p}(\bX_i^{(b)};\stheta)+  \Upsilon n_b^{-1/2}.
$$
The empirical log-likelihood ratio is then defined by
$$\mathcal{R}_{\model,p}^{\alt}(\btheta; \sample_n) = \sum_{i=1}^n \ln \left(1 + \lambda_{\model,p}^{\alt}(\btheta )^\top\Psi_{\model,p}^{\alt}(\bX_i;\btheta)\right),$$
where the Lagrange multipliers $\lambda_{\model,p}^{\alt}(\btheta )$ denote the multipliers associated with the perturbed constraints. Hence, we consider the $B$-dimensional vector $Y_{\estall}^{\alt}=(Y_{\estall, 1}^{\alt},\ldots,Y_{\estall , B}^{\alt})^\top$ statistic obtained with a local perturbation of the moment condition where
$$
Y_{\any}^{\alt}=2\mathcal{R}_{\model, p}^{\alt}(\btheta; \sample^{(b)}) .
$$

By shifting the moment conditions by an order of $n_b^{-1/2}$, we examine the test's ability to detect subtle deviations in the conditional distribution of the latent variables. Since the parameters are estimated beforehand, this local alternative specifically targets the validity of the posterior classification laws, aiming to identify if the selected model $\model$ remains consistent with the true underlying clustering structure even under small distributional shifts.

The following theorem establishes the asymptotic distribution of the test statistic under such local alternatives:

\begin{theorem}\label{thm:alternative} 
If Assumptions~\ref{ass:main} hold true then, under the null hypothesis  stated by \eqref{eq:null}, the supremum of difference cumulative distribution functions of $Y_{\estall}^{\alt}$ and the  cumulative distribution functions of a vector of $B$ independent non-central chi-square random variables with $p$ degrees of freedom tends to zero meaning that
$$
\lim_{n\to \infty} \| F_{Y_{\estall}^{\alt}} - \prod_{b=1}^{B} F_{\chi^2_{p}(\Upsilon^\top\bSigma_{\model,p}^{-1}  \Upsilon)}\|_\infty = 0,
$$
where $F_{Y_{\estall}^{\alt}} $ and $F_{\chi^2_{p}(\Upsilon^\top\bSigma_{\model,p}^{-1}  \Upsilon)}$ denote  the cumulative distribution function of the $B$-dimensional vector $Y_{\estall}^{\alt}$ and a non-central chi-square random variable with $p$ degrees of freedom and non-centrality parameter equal to $\Upsilon^\top\bSigma_{\model,p}^{-1}  \Upsilon$ respectively.
\end{theorem}

 \section{Practical implementation}\label{sec:prac:imp}

The proposed testing procedure requires the calibration of several tuning parameters: the choice of the test statistic, the functional basis, the number of moment conditions $p$, the number of blocks $B$, and the number of Monte Carlo replications $M$. This section provides practical guidelines for these choices based on our theoretical results and numerical findings.

\subsection*{Choice of the statistics}
While several statistics were introduced in Section~\ref{subsec:allstat}, we recommend using the Kolmogorov-Smirnov (KS) or Cram\'er-von Mises (CvM) types. As shown in Section~\ref{sec:num}, these statistics exhibit superior numerical performance in small samples ($n=200$) compared to the \textit{max} or \textit{sum} statistics. By considering the full distribution of the moment vector rather than its individual components, they provide a more robust diagnostic of the posterior probability distribution.

\subsection*{Choice of the basis}
Theorem~\ref{thm:niveau} requires basis functions that satisfy Assumptions~\ref{ass:main}.\ref{ass:cov} and~\ref{ass:main}.\ref{ass:growing2}. While indicator functions allow for a theoretical construction satisfying the identity covariance matrix assumption through PCA-based orthogonalization, defining the partition sets on the simplex is challenging for $K>2$. Consequently, we recommend using a Bernstein basis. To satisfy Assumption~\ref{ass:main}.\ref{ass:cov} and handle the potential multicollinearity on the simplex, we propose the following data-driven construction:
\begin{itemize}
    \item Consider $K$-variate Bernstein polynomials of increasing degrees $s=1, 2, \dots$ until the number of polynomials exceeds $p$. Recall that a $K$-variate Bernstein polynomial of degree $s$ is defined by $\varpi_{j_1,\ldots,j_K}(a_1,\ldots,a_K) = \frac{s!}{\prod_{k=1}^K j_k!}a_k^{j_k}$ where $\sum_{k=1}^K j_k = s$.
    \item Perform a Principal Component Analysis (PCA) on these polynomials evaluated on the data (or simulated data under $H_0$) to ensure the invertibility of the covariance matrix $\Sigma_{m,p}$.
    \item The final functions used in the moment conditions \eqref{eq:null} are composed of the first $p$ principal components that are associated with an eigen values greater than a specific threshold, say $10^{-4}$. This threshold aim to ensure that the smallest eigen value of the covariance matrix is bounded away from zero.
\end{itemize}
Our experiments demonstrate that this approach offers high stability and handles the simplex structure naturally by removing redundant information.

\subsection*{Number of blocks and moment conditions}
The number of blocks $B$ primarily impacts the \textit{max} and \textit{sum} statistics. For KS and CvM, the procedure is remarkably robust to the choice of $B$. Regarding the number of moment conditions $p$, it must increase with $n$ to ensure the consistency of the test against any alternative. Empirically, a slow growth rate is sufficient to maintain the nominal level while providing high power.

\subsection*{Monte Carlo approximation}
The target moments $\mu_{m,\theta}$ in \eqref{eq:momentcond} are generally not available in closed form and must be approximated by $M$ Monte Carlo replications. This requires only the ability to simulate data from the estimated mixture model. According to Theorem~\ref{thm:alternative}, $M$ must grow fast enough so that the simulation error becomes negligible compared to the sampling variance. Empirically, the test fails to control the Type I error if $M$ is too small (e.g., $\xi=1/2$ in Table~\ref{tab:H0Monte}), as the numerical noise contaminates the statistic.

\subsection*{Recommended default values}
Based on our simulation study, considering a parametric mixture model, we suggest the following default settings for the Bernstein basis: $B = \lfloor 2n^{1/4} \rceil, \quad p = \lfloor 3 n^{1/9} \rceil, \quad \text{and} \quad M = \lfloor 10 n^{3/4} \rceil$. 
These values ensure that the requirements of Assumptions~\ref{ass:main} and Theorem 2 are satisfied for a wide range of data types.  The proposed method is obtained by the R package GOFclustering available on CRAN and all the numerical experiments are conducted with this R package.

\section{Numerical experiments} \label{sec:num} 

\subsection{Impact of the tuning parameters on the test performance under the null hypothesis}
In this section, we investigate the sensitivity of the proposed testing procedure to various tuning parameters under the null hypothesis. Specifically, we compare the performance of the different test statistics introduced in Section~\ref{subsec:allstat} and evaluate the impact of choosing different functional bases (\emph{e.g.,} Bernstein polynomials versus indicator functions). Furthermore, we analyze how the structural configuration of the test affects the results, focusing on the number of blocks $B$ and the dimension $p$ of the moment vector. Finally, we examine the influence of the number of Monte Carlo replications used to numerically approximate the moment condition, ensuring the stability and robustness of our approach.

In this section, data are generated from a bi-component Gaussian mixture model with equal proportions and identity covariance matrices. The observations are described by $d$ variables, and the centers of the components are defined as $\mu_1=(1,\ldots,1)^\top/\sqrt{d}$ and $\mu_2=(-1,\ldots,-1)^\top/\sqrt{d}$. Clustering is then performed using a Gaussian mixture model with diagonal covariance matrices, estimated via an EM algorithm implemented in the R package VarSelLCM \citep{bioinfo}. To ensure the reliability of the estimator $\widehat{\theta}_{m,n}$, the EM algorithm is systematically run with multiple random initializations (multi-start strategy). This approach minimizes the risk of converging to poor local maxima, thereby ensuring that the estimation error remains within the bounds required by Assumption 1.3 and does not bias the test's level in finite samples.

\subsubsection*{Impact of test statistics and functional bases}
Table~\ref{tab:H0stat} indicates the empirical rejection rates obtained with different test statistics and functional bases on 5000 samples by the proposed method used with $B=\lfloor 2n^{1/4} \rceil$, $p=\lfloor 3 n^{1/9} \rceil$,  $M=\lfloor 10 n^{3/4} \rceil$  and a nominal level of $\alpha=0.05$.
\begin{table}[ht]
\centering
\begin{tabular}{cccc|ccccc|ccccc}
  \hline
&& & & \multicolumn{5}{c|}{Bernstein basis}& \multicolumn{5}{c}{Indicator basis} \\
n & $B$ &$p$ & $M$ & max & sum & KS & CvM & AD & max & sum & KS & CvM & AD \\ 
  \hline
200 & 6 & 5 & 531 & 0.248 & 0.225 & 0.069 & 0.072 & 0.182 & 0.936 & 0.936 & 0.317 & 0.281 & 0.940 \\ 
  400 & 7 & 6 & 894 & 0.059 & 0.059 & 0.052 & 0.050 & 0.052 & 0.552 & 0.552 & 0.082 & 0.083 & 0.558 \\ 
  800 & 8 & 6 & 1504 & 0.049 & 0.046 & 0.051 & 0.048 & 0.048 & 0.117 & 0.134 & 0.053 & 0.058 & 0.119 \\ 
  1600 & 9 & 7 & 2529 & 0.048 & 0.046 & 0.050 & 0.048 & 0.047 & 0.072 & 0.096 & 0.064 & 0.064 & 0.071 \\ 
  3200 & 10 & 7 & 4254 & 0.048 & 0.052 & 0.048 & 0.049 & 0.049 & 0.070 & 0.100 & 0.066 & 0.064 & 0.065 \\ 
  6400 & 12 & 8 & 7155 & 0.059 & 0.051 & 0.050 & 0.050 & 0.049 & 0.069 & 0.095 & 0.053 & 0.055 & 0.061 \\ 
   \hline
\end{tabular}
\caption{Empirical level of the different test statistics under $H_0$ using Bernstein and indicator bases across various sample sizes $n$ ($\alpha=0.05$).}
\label{tab:H0stat}
\end{table}
Table~\ref{tab:H0stat} shows that all five statistics achieve the nominal level as the sample size $n$ increases, for both the Bernstein and indicator bases. However, the Bernstein basis appears more stable for smaller sample sizes and offers a significant practical advantage as it does not require the estimation of boundaries, making it easier to implement. Regarding the test statistics, those utilizing the full distribution (KS, CvM, and AD) outperform, for small samples, the max and sum statistics, which are based on specific components of the moment vector. In light of these results, and given their consistent performance, we will henceforth focus on the Bernstein basis combined with the KS and CvM statistics for the remainder of this study. 

To evaluate the stability of the test's conclusion with respect to the random split-sample construction, we conducted a reproducibility experiment.  For each statistic, we define the  consensus decision as the majority vote (reject or fail to reject $H_0$) across these 50 repetitions. We report in Table~\ref{tab:Repeat} the mean proportion of samples where the majority decision was  fail to reject $H_0$  (correct decision under $H_0$), along with the standard deviation of this majority outcome across the 1,000 datasets.
\begin{table}[ht]
\centering
\small
\begin{tabular}{cccc|cc|cc|cc|cc|cc}
  \hline
& & & & \multicolumn{2}{c|}{Max} & \multicolumn{2}{c|}{Sum} & \multicolumn{2}{c|}{KS} & \multicolumn{2}{c|}{CvM} & \multicolumn{2}{c}{AD} \\
$n$ & $B$ & $p$ & $M$ & mean & sd & mean & sd & mean & sd & mean & sd & mean & sd \\  
  \hline
200 & 6 & 5 & 531 & 0.793 & 0.101 & 0.815 & 0.099 & 0.938 & 0.047 & 0.935 & 0.049 & 0.854 & 0.093 \\ 
400 & 7 & 6 & 894 & 0.938 & 0.038 & 0.943 & 0.036 & 0.948 & 0.032 & 0.948 & 0.032 & 0.945 & 0.033 \\ 
800 & 8 & 6 & 1504 & 0.951 & 0.030 & 0.953 & 0.031 & 0.950 & 0.031 & 0.950 & 0.030 & 0.950 & 0.030 \\ 
1600 & 9 & 7 & 2529 & 0.950 & 0.031 & 0.952 & 0.031 & 0.950 & 0.031 & 0.950 & 0.031 & 0.950 & 0.031 \\ 
3200 & 10 & 7 & 4254 & 0.949 & 0.031 & 0.950 & 0.031 & 0.950 & 0.031 & 0.949 & 0.031 & 0.949 & 0.031 \\ 
6400 & 12 & 8 & 7155 & 0.948 & 0.032 & 0.946 & 0.033 & 0.949 & 0.032 & 0.949 & 0.031 & 0.949 & 0.031 \\ 
   \hline
\end{tabular}
\caption{Stability of the majority decision across 50 random splits under $H_0$ (Bernstein basis). The mean represents the rate of consistent non-rejection across the datasets, and sd reflects the variability of this majority consensus.}
\label{tab:Repeat}
\end{table}

As shown in Table~\ref{tab:Repeat}, the consensus decision is remarkably stable. For small samples ($n=200$), the KS and CvM statistics provide a very reliable majority vote, with a mean of 0.938 and a low standard deviation (0.047), indicating that the random split rarely alters the final conclusion. For $n \geq 400$, all statistics achieve a stability of approximately 0.95 with a minimal standard deviation, proving that the test results are not sensitive to the specific random split chosen by the user.

\subsubsection*{Impact of the number of blocks}
Table~\ref{tab:H0B} reports the empirical rejection rates obtained for different growth rates of the number of blocks, based on 5000 samples, using the proposed method with $B=\lfloor 2n^{1-\rho} \rceil$, $p=\lfloor 3 n^{1/9} \rceil$,  $M=\lfloor 10 n^{3/4} \rceil$ and a nominal level of $\alpha=0.05$.
\begin{table}[ht]
\centering
\begin{tabular}{ccc|cc|cc|cc|cc|cc}
  \hline
  & &$\rho$& \multicolumn{2}{c|}{$6/7$}& \multicolumn{2}{c|}{$5/6$}& \multicolumn{2}{c|}{$4/5$}& \multicolumn{2}{c|}{$3/4$}& \multicolumn{2}{c}{$1/2$} \\
  $n$ & $p$ &$M$ & KS & CvM & KS & CvM & KS & CvM & KS & CvM & KS & CvM \\ 
  \hline
200 & 5 & 531 & 0.059 & 0.057 & 0.056 & 0.060 & 0.056 & 0.063 & 0.073 & 0.074 & 0.991 & 0.983 \\ 
  400 & 6 & 894 & 0.056 & 0.058 & 0.047 & 0.044 & 0.051 & 0.052 & 0.050 & 0.049 & 0.941 & 0.886 \\ 
  800 & 6 & 1504 & 0.055 & 0.051 & 0.053 & 0.053 & 0.048 & 0.053 & 0.049 & 0.051 & 0.477 & 0.489 \\ 
  1600 & 7 & 2529 & 0.049 & 0.046 & 0.046 & 0.045 & 0.048 & 0.046 & 0.047 & 0.044 & 0.149 & 0.177 \\ 
  3200 & 7 & 4254 & 0.053 & 0.050 & 0.050 & 0.046 & 0.058 & 0.060 & 0.048 & 0.054 & 0.087 & 0.099 \\ 
  6400 & 8 & 7155 & 0.048 & 0.051 & 0.050 & 0.050 & 0.050 & 0.051 & 0.050 & 0.053 & 0.070 & 0.079 \\  
   \hline
\end{tabular}
\caption{Empirical level of the KS and CvM test statistics for various growth rates $\rho$ of the number of blocks $B = \lfloor 2n^{1-\rho} \rceil$ under the null hypothesis ($\alpha=0.05$).}
\label{tab:H0B}
\end{table}

Table~\ref{tab:H0B} shows that the nominal level of 5\% is correctly achieved when the growth rate of the number of blocks satisfies Assumption~\ref{ass:main}.\ref{ass:growing1}. Specifically, for  $\rho  \in \{6/7, 5/6, 4/5, 3/4\}$, the empirical rejection rates converge toward the nominal level $\alpha=0.05$ as $n$ increases. It is important to note that in this parametric framework, we have $\tau=1/2$, meaning that these values of $\rho$ strictly satisfy the theoretical requirements of our assumption. Conversely, the test fails to maintain the nominal level when Assumption~\ref{ass:main}.\ref{ass:growing1} is violated. When the number of blocks increases too rapidly (\emph{e.g.}, $\rho=1/2$), the number of observations within each block becomes insufficient to ensure the convergence of the local empirical processes, leading to a significant over-rejection of the null hypothesis. 

\subsubsection*{Impact of the number of moment conditions}
Table~\ref{tab:H0pvals} investigates the impact of the growth rate of the number of moment conditions, denoted by $\kappa$, on the test performance. It reports the empirical rejection rates obtained from 5000 samples using the proposed method with $B=\lfloor 2n^{1/4} \rceil$, $p=\lfloor 3 n^{\kappa} \rceil$,   $M=\lfloor 10 n^{3/4} \rceil$  and a nominal level of $\alpha=0.05$.
\begin{table}[ht]
\centering
\begin{tabular}{ccc|cc|cc|cc|cc|cc}
  \hline 
  & &$\kappa$& \multicolumn{2}{c|}{$1/11$}& \multicolumn{2}{c|}{$1/10$}& \multicolumn{2}{c|}{$1/9$}& \multicolumn{2}{c|}{$1/8$}& \multicolumn{2}{c}{$1/7$} \\
$n$ & $B$ & $M$ & KS & CvM & KS & CvM & KS & CvM & KS & CvM & KS & CvM \\ 
  \hline
200 & 6 & 531 & 0.070 & 0.066 & 0.074 & 0.077 & 0.065 & 0.068 & 0.073 & 0.072 & 0.073 & 0.077 \\ 
  400 & 7 & 894 & 0.054 & 0.053 & 0.053 & 0.051 & 0.048 & 0.049 & 0.049 & 0.045 & 0.044 & 0.045 \\ 
  800 & 8 & 1504 & 0.049 & 0.047 & 0.053 & 0.047 & 0.049 & 0.051 & 0.053 & 0.056 & 0.050 & 0.049 \\ 
  1600 & 9 & 2529 & 0.053 & 0.050 & 0.050 & 0.051 & 0.045 & 0.046 & 0.053 & 0.052 & 0.050 & 0.050 \\ 
  3200 & 10 & 4254 & 0.052 & 0.051 & 0.055 & 0.054 & 0.052 & 0.051 & 0.049 & 0.046 & 0.049 & 0.051 \\ 
  6400 & 12 & 7155 & 0.048 & 0.045 & 0.050 & 0.048 & 0.047 & 0.047 & 0.051 & 0.048 & 0.051 & 0.050 \\ 
   \hline
\end{tabular}
\caption{Empirical level of the KS and CvM test statistics for different growth rates $\kappa$ of the number of moment conditions $p = \lfloor 3 n^\kappa \rceil$ under the null hypothesis ($\alpha=0.05$).}
\label{tab:H0pvals}\end{table}
The results show that increasing the number of moment conditions does not alter the empirical level of the test, provided that the growth rate remains within the bounds prescribed by our theoretical assumptions. Indeed, for all tested values of $\kappa \in \{1/11, \dots, 1/7\}$, the empirical rejection rates consistently reach the nominal level of 5\% as the sample size $n$ increases. This highlights the robustness of the procedure regarding the dimension of the moment vector, ensuring that the asymptotic null distribution is well-preserved even when the number of equations grows with the sample size.

\subsubsection*{Impact of the number of Monte Carlo replications}
Table~\ref{tab:H0Monte} illustrates the impact of the number of Monte Carlo replications $M$ on the control of the empirical level. It reports empirical rejection rates for different test statistics and functional bases, based on 5000 samples, using the proposed method with $B=\lfloor 2n^{1/4} \rceil$, $p=\lfloor 3 n^{1/9} \rceil$, $M=\lfloor 10 n^{\xi} \rceil$ and a nominal level of $\alpha=0.05$.

\begin{table}[ht]
\centering
\begin{tabular}{ccc|cc|cc|cc|cc|cc}
  \hline 
  & &$\xi$& \multicolumn{2}{c|}{$1/2$}& \multicolumn{2}{c|}{$2/3$}& \multicolumn{2}{c|}{$3/4$}& \multicolumn{2}{c }{$4/5$}& \multicolumn{2}{c }{$5/6$}\\
  $n$ & $B$ & $p$ & KS & CvM & KS & CvM & KS & CvM & KS & CvM& KS & CvM\\
  \hline
200 & 6 & 5 & 0.095 & 0.104 & 0.075 & 0.081 & 0.067 & 0.069 & 0.078 & 0.078 & 0.070 & 0.071 \\ 
  400 & 7 & 6 & 0.087 & 0.093 & 0.056 & 0.056 & 0.049 & 0.051 & 0.051 & 0.051 & 0.050 & 0.053 \\ 
  800 & 8 & 6 & 0.117 & 0.130 & 0.054 & 0.052 & 0.048 & 0.049 & 0.053 & 0.053 & 0.050 & 0.049 \\ 
  1600 & 9 & 7 & 0.166 & 0.182 & 0.055 & 0.057 & 0.054 & 0.054 & 0.054 & 0.053 & 0.054 & 0.052 \\ 
  3200 & 10 & 7 & 0.254 & 0.276 & 0.060 & 0.059 & 0.053 & 0.052 & 0.047 & 0.052 & 0.049 & 0.049 \\ 
  6400 & 12 & 8 & 0.339 & 0.370 & 0.066 & 0.065 & 0.048 & 0.047 & 0.048 & 0.048 & 0.052 & 0.054 \\ 
   \hline
\end{tabular}\caption{Empirical level of the KS and CvM test statistics for different growth rates $\xi$ of the number of Monte Carlo replications $M = \lfloor 10 n^\xi \rceil$ under the null hypothesis ($\alpha=0.05$).}
\label{tab:H0Monte}
\end{table}
The results show that as long as the Monte Carlo approximation error is negligible, which occurs here for $\xi \in \{2/3, 3/4, 4/5, 5/6\}$, the test correctly achieves the nominal asymptotic level of 5\% as $n$ increases. These empirical findings are in perfect agreement with the theoretical requirements of Theorem~\ref{thm:alternative}. This theorem establishes that the asymptotic null distribution of the test statistic is preserved provided that the numerical approximation error of the moment conditions vanishes at a sufficient rate. Conversely, when the number of replications is insufficient to ensure this order of precision (notably for $\xi = 1/2$), the approximation error remains too large relative to the sampling variance. This leads to a violation of the conditions set forth in Theorem~\ref{thm:alternative}, resulting in a loss of the asymptotic level and a significant over-rejection of the null hypothesis. These findings confirm the theoretical requirement that $M$ must grow fast enough (typically with $\xi > 1/2$) to make the numerical simulation error statistically negligible relative to the sampling distribution's variance, thereby ensuring the validity of the diagnostic procedure.

\subsubsection*{Impact of the dimension of the variables used in clustering}
Finally, we evaluate the impact of the data dimension $d$ on the test performance. Although the data are generated in a higher-dimensional space, the testing procedure is applied to the posterior probabilities, which always lie on a $K$-dimensional simplex. Table~\ref{tab:H0dim} presents the empirical rejection rates for $d \in \{10, 15, 20, 25,  50\}$ based on 5000 samples, using the proposed method with $B=\lfloor 2n^{1/4} \rceil$, $p=\lfloor 3 n^{1/9} \rceil$, $M=\lfloor 10 n^{3/4} \rceil$ and a nominal level of $\alpha=0.05$.
\begin{table}[ht]
\centering
\begin{tabular}{cccc|cc|cc|cc|cc|cc}
  \hline
  &&&d& \multicolumn{2}{c|}{10}& \multicolumn{2}{c|}{15}& \multicolumn{2}{c|}{20}& \multicolumn{2}{c|}{25}& \multicolumn{2}{c}{50} \\
n & p& B& M & KS & CvM & KS & CvM & KS & CvM & KS & CvM & KS & CvM   \\ 
  \hline
200 & 5 & 6 & 531 & 0.108 & 0.109 & 0.161 & 0.159 & 0.243 & 0.242 & 0.309 & 0.308 & 0.673 & 0.665 \\ 
  400 & 6 & 7 & 894 & 0.054 & 0.052 & 0.046 & 0.047 & 0.059 & 0.057 & 0.048 & 0.054 & 0.115 & 0.117 \\ 
  800 & 6 & 8 & 1504 & 0.054 & 0.052 & 0.044 & 0.047 & 0.054 & 0.052 & 0.049 & 0.049 & 0.057 & 0.053 \\ 
  1600 & 7 & 9 & 2529 & 0.050 & 0.047 & 0.048 & 0.047 & 0.054 & 0.053 & 0.050 & 0.050 & 0.049 & 0.049 \\ 
  3200 & 7 & 10 & 4254 & 0.054 & 0.052 & 0.049 & 0.048 & 0.046 & 0.048 & 0.053 & 0.055 & 0.051 & 0.051 \\ 
  6400 & 8 & 12 & 7155 & 0.045 & 0.048 & 0.047 & 0.046 & 0.051 & 0.056 & 0.049 & 0.051 & 0.047 & 0.047 \\ 
   \hline
\end{tabular}
\caption{Empirical level of the KS and CvM test statistics across different data dimensions $d$ using the Bernstein basis under $H_0$ ($\alpha=0.05$).}
\label{tab:H0dim}
\end{table}
Table~\ref{tab:H0dim} demonstrates that the data dimension $d$ does not significantly impact the empirical level of the test. For all considered values of $d$, the rejection rates effectively converge toward the nominal level $\alpha=0.05$ as $n$ increases. This robust behavior is explained by the fact that the complexity of our testing procedure depends on the number of clusters $K$ and the number of moments $p$, rather than the dimension of the variable space $d$. While a larger $d$ may influence the variance of the maximum likelihood estimator by a constant factor, the convergence rate of the estimator remains unchanged. Consequently, the asymptotic properties of the test are preserved regardless of the ambient  dimension of the data, although this dimension can impact the results for small samples.

\subsection{Assessment of the asymptotic level for various clustering models}
In this experiment, we illustrate that the proposed procedure achieves its nominal asymptotic level $\alpha$ across a variety of data distributions. We generate six-dimensional data from mixture models with $K=3$ components and equal proportions. Three types of parametric mixtures are considered to evaluate the versatility of the test: the Gaussian mixture model (GMM), the Poisson mixture model (PMM), and the Bernoulli mixture model (BMM). 

To investigate the impact of component separation, we examine three levels of overlap, corresponding to Bayes classification rates of 0.80, 0.85, and 0.90. These levels are controlled by a scalar $\delta$ applied to the centers:
$\mu_1(\delta)=(2 \delta, \delta, 0,2 \delta, \delta, 0)^\top$, 
$\mu_2(\delta)=(\delta, 0,2 \delta, \delta, 0,2 \delta)^\top$, and 
$\mu_3(\delta)=(0,2 \delta, \delta, 0,2 \delta, \delta)^\top$.

\begin{itemize}
    \item GMM: Defined with centers $\mu_k(\delta)$ and identity covariance matrices. The three classification rates are achieved with $\delta \in \{0.675, 0.780, 0.911\}$.
    \item PMM: Defined with intensity rates $\mu_k(\delta)+\mathbf{1}_6\delta$. The classification rates are achieved with $\delta \in \{0.859, 1.139, 1.552\}$.
    \item BMM: Defined with probability vectors $\text{logit}^{-1}(\mu_k(\delta)-\mathbf{1}_6\delta)$. The classification rates are achieved with $\delta \in \{1.435, 1.692, 2.040\}$.
\end{itemize}

For each sample size, distribution type, and overlap level, we generate $N=5000$ replicates. All tests are conducted using the Bernstein basis with an asymptotic nominal level of $\alpha=0.05$.

\begin{table}[ht]
\centering
\begin{tabular}{cccc|ccc|ccc|ccc}
  \hline
\multicolumn{4}{r}{Error rate}  & \multicolumn{3}{c|}{0.20} &\multicolumn{3}{c|}{0.15} &\multicolumn{3}{c}{0.10} \\
$n$ & $p$ & $B$  & $M$  & GMM & PMM & BMM& GMM & PMM & BMM& GMM & PMM & BMM \\ 
  \hline
200 & 5 & 6 & 531 & 0.348 & 0.134 & 0.209 & 0.213 & 0.151 & 0.264 & 0.354 & 0.265 & 0.486 \\ 
  400 & 6 & 7 & 894 & 0.068 & 0.059 & 0.063 & 0.061 & 0.058 & 0.061 & 0.083 & 0.077 & 0.110 \\ 
  800 & 6 & 8 & 1504 & 0.055 & 0.054 & 0.053 & 0.052 & 0.049 & 0.049 & 0.051 & 0.055 & 0.056 \\ 
  1600 & 7 & 9 & 2529 & 0.051 & 0.050 & 0.050 & 0.055 & 0.051 & 0.053 & 0.054 & 0.048 & 0.047 \\ 
  3200 & 7 & 10 & 4254 & 0.055 & 0.046 & 0.045 & 0.054 & 0.053 & 0.053 & 0.054 & 0.050 & 0.053 \\ 
  6400 & 8 & 12 & 7155 & 0.050 & 0.052 & 0.052 & 0.052 & 0.045 & 0.048 & 0.053 & 0.053 & 0.051 \\ 
   \hline
\end{tabular}
\caption{Empirical level of the test for Gaussian (GMM), Poisson (PMM), and Bernoulli (BMM) mixture models under $H_0$ across different classification error rates ($0.20, 0.15, 0.10$). Rejection rates are calculated over 5000 replicates at a nominal level $\alpha=0.05$ obtained with the KS statistic.}
\label{tab:nominalparam} 
\end{table}

Table~\ref{tab:nominalparam} demonstrates that the proposed testing procedure effectively controls the Type I error across all considered distributions and overlap scenarios. For small sample sizes ($n=200$), like in the previous experiments, we observe an over-rejection of the null hypothesis. However, as $n$ increases, the empirical rejection rates consistently converge toward the nominal level of 5\%, regardless of the underlying nature of the data (continuous, count, or binary) or the degree of cluster overlap. This robust behavior empirically validates our theoretical findings, showing that the asymptotic null distribution is preserved as long as the clustering model is correctly specified.

\subsection{Sensitivity to deviations from the null hypothesis and power results}

We now investigate the power of the procedure to detect model misspecification, focusing on cases where the data distribution deviates from the assumptions required for a well-specified clustering model. Data are generated from two different alternative mixture models, while clustering is performed using a Gaussian mixture model (GMM) with diagonal covariance matrices.

The first case examines a violation of the conditional independence assumption. We generate data from a mixture of three Gaussian components with equal proportions and means $\mu_1(\delta)$, $\mu_2(\delta)$, and $\mu_3(\delta)$ ($\delta=0.675$). While the marginal distributions match the clustering model, the joint distribution does not: the within-component covariance between variables $j$ and $j'$ is defined as $c^{|j-j'|}$. When $c=0$, the model is well-specified, but $c \neq 0$ introduces dependency that the diagonal GMM cannot capture. The second case investigates a situation where the model is misspecified for the posterior classification probabilities, even if it might remain reasonable for hard clustering. Here, data are generated from a mixture of products of Student's $t$-distributions with $df$ degrees of freedom. As $df$ decreases, the tails become heavier, moving further away from the Gaussian assumption used in the clustering process. Table~\ref{tab:H1param} presents the empirical rejection rates for these two scenarios.

\begin{table}[ht]
\centering
\begin{tabular}{cccc|cccc|cccc}
  \hline
  &&&& \multicolumn{4}{c|}{Full Gaussian mixture model} & \multicolumn{4}{c}{Student mixture model}\\
$n$& $p$ & $B$& $M$& $c=0$ & $c=0.25$ & $c=0.50$ & $c=0.75$ & $df=6$& $df=5$& $df=4$& $df=3$ \\ 
  \hline
200 & 5 & 6 & 531 & 0.336 & 0.580 & 0.508 & 0.628 & 0.669 & 0.692 & 0.742 & 0.798 \\ 
  400 & 6 & 7 & 894 & 0.061 & 0.098 & 0.110 & 0.406 & 0.255 & 0.344 & 0.477 & 0.611 \\ 
  800 & 6 & 8 & 1504 & 0.049 & 0.054 & 0.081 & 0.621 & 0.129 & 0.197 & 0.334 & 0.565 \\ 
  1600 & 7 & 9 & 2529 & 0.047 & 0.054 & 0.135 & 0.860 & 0.086 & 0.118 & 0.295 & 0.684 \\ 
  3200 & 7 & 10 & 4254 & 0.044 & 0.064 & 0.362 & 0.973 & 0.088 & 0.155 & 0.378 & 0.913 \\ 
  6400 & 8 & 12 & 7155 & 0.052 & 0.112 & 0.809 & 1.000 & 0.107 & 0.280 & 0.588 & 0.996 \\ 
   \hline
\end{tabular}
\caption{Empirical power of the test under two alternative hypotheses: a GMM with correlated variables ($c$) and a Student's $t$-mixture model ($df$). The case $c=0$ represents the null hypothesis ($H_0$). Rejection rates are computed over 5000 replicates ($\alpha=0.05$).}
\label{tab:H1param}
\end{table}

The results in Table~\ref{tab:H1param} illustrate the power curves of the proposed test. As the data-generating process deviates further from the clustering model assumptions (i.e., as the correlation $c$ increases or the degrees of freedom $df$ decrease), the procedure effectively detects the misspecification with high rejection rates. For instance, with $c=0.75$ or $df=3$, the test reaches a power near 1.000 as the sample size increases.  Conversely, when the alternative model is closer to the null hypothesis (e.g., $c=0.25$ or $df=6$), the detection becomes more challenging. In these cases, a larger sample size $n$ is required to achieve significant power. This highlights the consistency of the test: even subtle deviations from the well-specification assumption can be detected, provided that the number of observations is sufficiently large to distinguish the underlying distribution from the assumed model.

To further evaluate the performance of our approach, we compare it with classical goodness-of-fit (GOF) tests applied directly to the data distribution. We consider the Kolmogorov-Smirnov (KS-MD) and Anderson-Darling (AD-MD) tests, as implemented in the \texttt{gof\_test} function of the \texttt{MDgof} R package \citep{MDgof}. These tests are evaluated using the exact same simulation protocol described previously and for each replicate, the $p$-values of the \texttt{MDgof} tests are computed using $B=100$ Monte Carlo simulations, as required by the package to handle the estimation of unknown parameters. The empirical rejection rates for the correlated Gaussian and Student's $t$ scenarios are reported in Table~\ref{tab:comp_correlation} and Table~\ref{tab:comp_student}, respectively.

\begin{table}[ht]
\centering
\begin{tabular}{c|cc|cc|cc|cc}
  \hline
$n$ & \multicolumn{2}{c|}{$df=6$}  & \multicolumn{2}{c|}{$df=5$}  & \multicolumn{2}{c|}{$df=4$}  & \multicolumn{2}{c}{$df=3$} \\ 
& KS-MD & AD-MD& KS-MD & AD-MD&KS-MD & AD-MD&KS-MD & AD-MD\\
  \hline
200 & 0.003 & 0.024 & 0.028 & 0.095 & 0.020 & 0.224 & 0.036 & 0.584 \\ 
  400 & 0.009 & 0.046 & 0.051 & 0.169 & 0.044 & 0.311 & 0.153 & 0.761 \\ 
  800 & 0.006 & 0.046 & 0.131 & 0.211 & 0.227 & 0.400 & 0.771 & 0.868 \\ 
  1600 & 0.004 & 0.043 & 0.486 & 0.273 & 0.794 & 0.500 & 1.000 & 0.941 \\ 
  3200 & 0.000 & 0.042 & 0.965 & 0.351 & 0.999 & 0.618 & 1.000 & 0.977 \\ 
  6400 & 0.004 & 0.041 & 1.000 & 0.492 & 1.000 & 0.724 & 1.000 & 0.992 \\ 
   \hline
\end{tabular}
\caption{Empirical rejection rates for KS-MD and AD-MD tests (from \texttt{MDgof}) under the correlated Gaussian mixture model. The column $c=0$ represents the null hypothesis. Rejection rates are based on 5000 replicates with $\alpha=0.05$ and $B=100$ internal simulations for $p$-value estimation.}
\label{tab:comp_correlation}
\end{table}

\begin{table}[ht]
\centering
\begin{tabular}{c|cc|cc|cc|cc}
  \hline
$n$ & \multicolumn{2}{c|}{$df=6$}  & \multicolumn{2}{c|}{$df=5$}  & \multicolumn{2}{c|}{$df=4$}  & \multicolumn{2}{c}{$df=3$} \\ 
& KS-MD & AD-MD& KS-MD & AD-MD&KS-MD & AD-MD&KS-MD & AD-MD\\
  \hline
 200 & 0.010 & 0.112 & 0.006 & 0.115 & 0.014 & 0.113 & 0.023 & 0.139 \\ 
  400 & 0.023 & 0.222 & 0.012 & 0.221 & 0.025 & 0.186 & 0.045 & 0.260 \\ 
  800 & 0.059 & 0.434 & 0.055 & 0.375 & 0.100 & 0.360 & 0.179 & 0.517 \\ 
  1600 & 0.208 & 0.620 & 0.223 & 0.585 & 0.384 & 0.580 & 0.615 & 0.822 \\ 
  3200 & 0.776 & 0.803 & 0.770 & 0.756 & 0.931 & 0.807 & 0.994 & 0.956 \\ 
  6400 & 0.999 & 0.892 & 1.000 & 0.922 & 1.000 & 0.949 & 1.000 & 0.996 \\  
  \hline
\end{tabular}
\caption{Empirical rejection rates for KS-MD and AD-MD tests (from \texttt{MDgof}) under the Student's $t$ mixture model ($df$ degrees of freedom). Results are based on 5000 replicates with $\alpha=0.05$ and $B=100$ internal simulations.}
\label{tab:comp_student}
\end{table}

A critical observation from Table~\ref{tab:comp_correlation} is that under the null hypothesis ($c=0$), these tests fail to reach the nominal level of 5\%. For instance, the KS-MD and CvM-MD tests consistently show rejection rates near 0.005, indicating a highly conservative behavior. While the AD-MD test is closer to the nominal level (around 0.04), none of the methods perfectly control the Type I error in this finite-sample setting. Regarding the alternative hypotheses, although the power increases with the sample size $n$ and the degree of misspecification ($c$ or $df$), these results are less conclusive than those of our proposed method. The lack of proper level control under $H_0$ makes the interpretation of the power results for $c > 0$ or $df < \infty$ more difficult, as the tests might either over-penalize or under-detect deviations depending on the chosen statistic. This reinforces the advantage of our approach, which provides a more reliable internal diagnostic specifically tailored for the posterior probabilities in model-based clustering.

To evaluate if the stability observed under the null hypothesis carries over to the detection of misspecification for our proposed approach, we repeated the reproducibility experiment in the most challenging Student's $t$-mixture scenario ($df=3$). For each sample size $n$, we computed the majority decision (rejection of $H_0$) over 50 random splits across 5000 independent datasets. Table~\ref{tab:RepeatH1} presents the mean consensus rate and its standard deviation.

\begin{table}[ht]
\centering
\small
\begin{tabular}{cccc|cc|cc|cc|cc|cc}
  \hline
& & & & \multicolumn{2}{c|}{Max} & \multicolumn{2}{c|}{Sum} & \multicolumn{2}{c|}{KS} & \multicolumn{2}{c|}{CvM} & \multicolumn{2}{c}{AD} \\
$n$ & $B$ & $p$ & $M$ & mean & sd & mean & sd & mean & sd & mean & sd & mean & sd \\  
  \hline
200 & 6 & 5 & 531 & 0.980 & 0.060 & 0.979 & 0.065 & 0.858 & 0.155 & 0.845 & 0.157 & 0.971 & 0.079 \\ 
400 & 7 & 6 & 894 & 0.930 & 0.112 & 0.930 & 0.113 & 0.774 & 0.156 & 0.770 & 0.157 & 0.909 & 0.130 \\ 
800 & 8 & 6 & 1504 & 0.851 & 0.146 & 0.881 & 0.138 & 0.741 & 0.147 & 0.748 & 0.152 & 0.838 & 0.157 \\ 
1600 & 9 & 7 & 2529 & 0.758 & 0.158 & 0.888 & 0.137 & 0.771 & 0.159 & 0.793 & 0.162 & 0.838 & 0.158 \\ 
3200 & 10 & 7 & 4254 & 0.812 & 0.157 & 0.975 & 0.061 & 0.913 & 0.125 & 0.934 & 0.109 & 0.953 & 0.092 \\ 
6400 & 12 & 8 & 7155 & 0.957 & 0.075 & 1.000 & 0.004 & 0.997 & 0.015 & 0.998 & 0.011 & 0.999 & 0.008 \\ 
   \hline
\end{tabular}
\caption{Stability of the majority decision (rejection rate) under $H_1$ (Student's $t$-mixture, $df=3$, Bernstein basis). Mean and sd reflect the consistency of the test's power across 50 random splits per sample.}
\label{tab:RepeatH1}
\end{table}

The results confirm that the testing procedure maintains a robust consensus even under the alternative hypothesis. While the variability (sd) slightly increases for intermediate sample sizes where the power is in transition, the consensus reaches perfect stability (sd near 0) as $n$ increases, with the \textit{sum} and \textit{AD} statistics showing the fastest convergence toward a certain rejection of the false model.

\subsection{Sensitivity to deviations from the null hypothesis and power results for nonparametric mixture models}

In this section, we extend our numerical investigation to the nonparametric framework, specifically focusing on the conditional independence assumption within a multivariate mixture model. In such a case, $\btheta$ consists of a parametric part (proportions) and a nonparametric part (univariate densities for each component and variable). This scenario is particularly challenging as it lies outside the strict theoretical scope previously established. Indeed, while estimators for univariate densities in nonparametric product mixture models are available \citep{LevineBiometrika2011}, establishing their convergence rates and consequently the influence of the estimation error on the test's asymptotic behavior is still an open problem, leading that the converge rate of $\sup_{1\leq i\leq n} \|\Psi_{\model,p}(\bX_i;\stheta) -  \Psi_{\model,p}(\bX_i;\wtheta)\|_2$ to zero is unknown.

The data-generating process follows the setup described in the previous section: we consider a mixture of three Gaussian components with means $\mu_1(\delta)$, $\mu_2(\delta)$, and $\mu_3(\delta)$ ($\delta=0.675$). To evaluate the power of the test against violations of the conditional independence, we introduce a within-component correlation $c$ between variables $j$ and $j'$, defined as $c^{|j-j'|}$. We apply our testing procedure using the same tuning parameters as in the parametric study ($B=\lfloor 2n^{1/4} \rceil$, $p=\lfloor 3 n^{1/9} \rceil$, and $M=\lfloor 10 n^{3/4} \rceil$).

\begin{table}[ht]
\centering
\begin{tabular}{cccc|cccc}
  \hline
$n$ & $p$ & $B$ & $M$ & $c=0$ & $c=0.25$ & $c=0.50$ & $c=0.75$ \\ 
  \hline
200 & 5 & 6 & 531 & 0.274 & 0.354 & 0.384 & 0.522 \\ 
400 & 6 & 7 & 894 & 0.060 & 0.082 & 0.085 & 0.330 \\ 
800 & 6 & 8 & 1504 & 0.053 & 0.056 & 0.062 & 0.585 \\ 
1600 & 7 & 9 & 2529 & 0.050 & 0.054 & 0.098 & 0.877 \\ 
3200 & 7 & 10 & 4254 & 0.049 & 0.064 & 0.276 & 0.958 \\ 
6400 & 8 & 12 & 7155 & 0.052 & 0.079 & 0.702 & 0.999 \\ 
   \hline
\end{tabular}
\caption{Empirical level and power of the test for the nonparametric mixture model under deviations from conditional independence (correlation $c$). The case $c=0$ corresponds to the null hypothesis $H_0$. Results are based on 5000 replicates ($\alpha=0.05$).}
\label{tab:H1nonparam}
\end{table}

The results in Table~\ref{tab:H1nonparam} demonstrate that the procedure remains effective even in this nonparametric setting. When the model is well-specified ($c=0$), the empirical rejection rate correctly approaches the nominal level $\alpha=0.05$ as $n$ increases, suggesting that the lack of explicit theoretical convergence rates for the univariate density estimators does not hinder the test's level control in practice. Furthermore, the test exhibits significant power in detecting deviations from the null hypothesis. As the correlation $c$ increases, the rejection rates rise consistently with the sample size, reaching near-certainty ($0.999$) for $c=0.75$ at $n=6400$. This highlights the versatility of the proposed diagnostic tool, which appears robust enough to handle nonparametric specifications where the independence assumption is violated.

\section{Applications on real data}
 \subsection{Congressional Voting Records}
We consider the Congressional Voting Records data set \citep{schlimmer1987concept}, which contains the votes of each of the $n=435$ members of the U.S. House of Representatives on 16 key issues. For each vote, three outcomes are recorded: yea, nay, or unknown disposition. The data are modeled using a mixture of products of multinomial distributions \citep{Goo74}.
Parameter estimation is carried out via maximum likelihood, and model selection is based on the BIC criterion \citep{Schwarz:78}, which selects $K=4$ components. Parameter estimation is performed with the function \texttt{VarSelCluster}  the {\sc R} package VarSelLCM \citep{bioinfo}. 

The proposed procedure, which enables a goodness-of-fit test for the distribution of posterior classification probabilities, is implemented using the tuning parameters described in Section~\ref{sec:num}. Specifically, the procedure is run with  a nominal level $\alpha=0.05$ and the tuning parameters described in Section~\ref{sec:prac:imp}. The observed test statistic for the maximum statistic is $y^\dagger_{\model,n,p,\widehat{\btheta}n}=1800.769$, while the corresponding quantile is $q_{\mathcal{X}^2_p,1-\alpha_n}=17.60$. Table~\ref{tab:appli1pvals} gives the pvalues obtained by the other four test statistics.
\begin{table}[h]
    \centering
    \begin{tabular}{cccc}
    \hline
        sum & Kolmogorov-Smirnov & Cramér-von Mises & Anderson-Darling \\
    \hline
        $<10^{-4}$ & 0.0064 & 0.0010 & $<10^{-4}$ \\ 
    \hline
    \end{tabular}
    \caption{P-values obtained for goodness-of-fit testing of the distribution of the posterior probabilities of classification obtained by a mixture of product of multinomial distributions on the Congressional Voting Records data.}
    \label{tab:appli1pvals}
\end{table}

As a result, the procedure rejects the hypothesis that the posterior classification probabilities arise from a mixture of products of multinomial distributions. Figure~\ref{fig:Congressional} shows the QQplots  comparing the empirical distributions of the posterior classification probabilities for each component with their theoretical distributions under the fitted mixture model. This figure shows that the both distributions are not similar which is in agreement with the conclusion of the testing procedure. Hence, the mixture model of product of multinomial distributions seems not adequate for clustering this data set.

\begin{figure}[htp]
    \centering
    \includegraphics[width=0.95\linewidth]{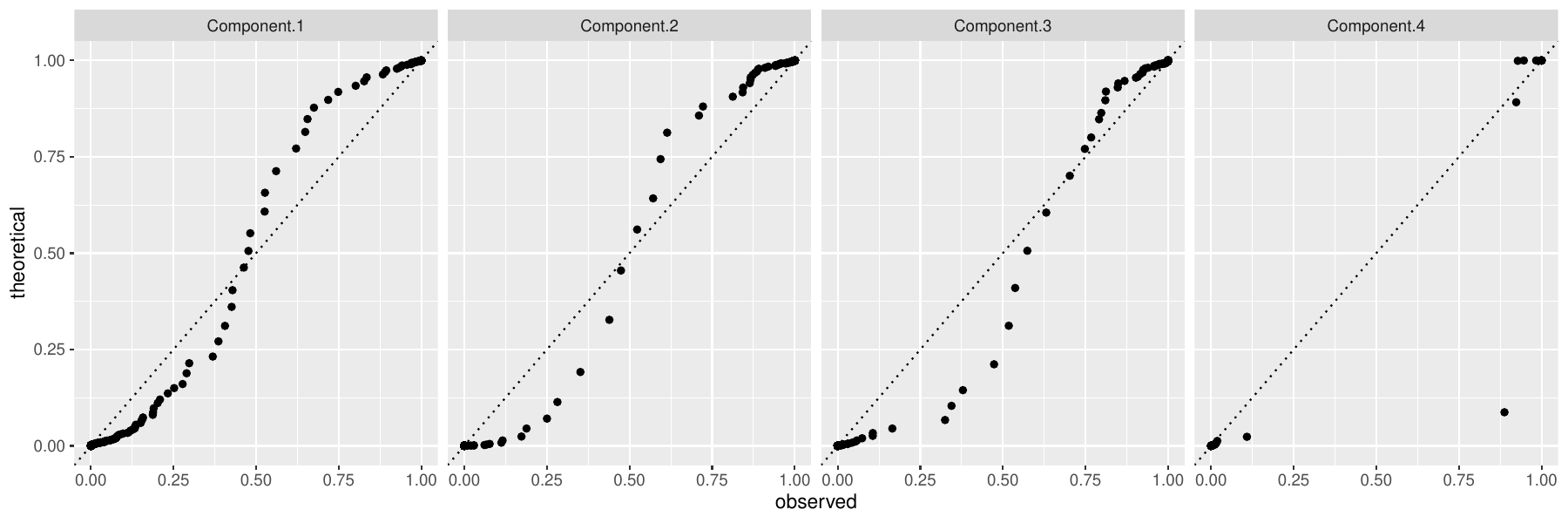}
    \caption{Quantile-quantile plot comparing the empirical distribution of the posterior classification probabilities for each component with their theoretical distributions under the fitted mixture model.}
    \label{fig:Congressional}
\end{figure}

Since the data are categorical, the marginal distribution of each variable is correctly specified under this model class. However, rejection of the null hypothesis appears reasonable, as several key issues relate to the same underlying topic. For example, three votes pertain to children: \emph{handicapped-infants}, \emph{religious-groups-in-schools}, and \emph{education-spending}. These three variables exhibit strong dependence: the chi-square test rejects independence between each pair, with p-values less than $2.2 \times 10^{-16}$. Hence, this dependency between variables is not only explained by the latent variable of the component memberships since our procedure rejects the null hypothesis. We therefore conclude that the posterior classification probabilities are not well approximated, due to the model's assumption of conditional independence across variables within components. It is important to note that directly testing this assumption from the data is challenging without relying on the proposed procedure. Indeed, such a test would require knowledge of the true component memberships, which are not observable. Instead, only estimators of these memberships, derived from the model under evaluation, are available.

\subsection{Graft-versus-Host Disease}
We consider the  Graft-versus-Host Disease data \citep{brinkman2007high} that gathers two samples of this flow cytometry data, one from a patient with the Graft-versus-Host Disease (9083 observations), and the other from a control patient  (6809 observations). The Graft-versus-host disease  is a severe complication that can occur following hematopoietic stem cell transplantation.   Each observation includes four continuous variables that correspond to biomarkers. Hence, the data set is composed of $n=15892$ observations described by 4 continuous variables while the information related to the patient is not used during clustering. 

First, clustering is achieved by a bi-component mixture model of Gaussian distributions with diagonal covariance matrices. The proposed procedure   is run with  a nominal level $\alpha=0.05$ the tuning parameters described in Section~\ref{sec:prac:imp}. The observed test statistic for the maximum statistic is $y^\dagger_{\model,n,p,\widehat{\btheta}n}=22.13$ and the corresponding quantile is $q_{\mathcal{X}^2_p,1-\alpha_n}=13.05$. Table~\ref{tab:appli1pvals} gives the pvalues obtained by the other four test statistics.
\begin{table}[h]
    \centering
    \begin{tabular}{cccc}
    \hline
        sum & Kolmogorov-Smirnov & Cramér-von Mises & Anderson-Darling \\
    \hline
        $<10^{-4}$ & 0.0008 &  $<10^{-4}$ & $<10^{-4}$ \\ 
    \hline
    \end{tabular}
    \caption{P-values obtained for goodness-of-fit testing of the distribution of the posterior probabilities of classification obtained by bi-component Gaussian mixture in the Graft-versus-Host Disease data.}
    \label{tab:appli1pvals}
\end{table}

Therefore, at the asymptotic level 0.05, we reject the hypothesis claiming that the posterior probabilities of classification arises from a Gaussian mixture model with diagonal covariance matrices. 

\section{Conclusion} \label{sec:concl} 
In this paper, we introduced a procedure that evaluates the relevance of a mixture model used for clustering.
This procedure consists of a goodness-of-fit test of the distribution of the posterior probabilities of classification, thereby directly considering the clustering aim.
By focusing directly on the posterior probabilities of classification, all the nature of data can be analyzed by the same procedure. In addition, the mixture model can be considered in a parametric or non-parametric framework.
The procedure does not necessitate any additional parameter estimation since it relies on the posterior probabilities of classification computed with an estimator of the model parameters. 

The proposed procedure is based on $p$ functional moments. If $p$ is fixed, then only mild assumptions are required on the functions, but there is no guarantee to detect all the alternatives. Therefore, we propose to allow $p$ to grow with the sample size at an appropriate rate. In this context, more restrictive conditions should be satisfied by the basis functions.

Other goodness-of-fit testing procedures could be considered to investigate the relevance of modeling the distribution of the posterior probability of classifications. For instance, some extensions of the Kolmogorov-Smirnov test with estimated parameters \citep{braun1980simple} could be considered. However, note that the distribution of the posterior probabilities of classification is generally not explicit, but it can be approximated using numerical methods (\emph{e.g.}, Monte Carlo methods). Such a procedure is promising if $K=2$ because the Kolmogorov-Smirnov test was developed for univariate data, and because the posterior probabilities of classification are defined on the simplex, meaning that their dimension is one when $K=2$. If $K$ is greater than two, extensions of the Kolmogorov-Smirnov test to multivariate data could be considered as an alternative to the proposed procedure. Such an approach would have the advantage of avoiding the choice of the dimension $p$ as well as the choice of basis functions. However, such extensions remain challenging and could be developed in future work.

A natural question arises when the mixture model $m$ is selected using a criterion such as the Bayesian Information Criterion (BIC) before applying the proposed test. In this work, the asymptotic theory is developed under a fixed model $m$. However, provided that the number of candidate models is finite, the consistency of the BIC ensures that the selected model remains the same with probability tending to one as $n \to \infty$. Under the null hypothesis, the BIC will select the true model, thus aligning the post-selection test with our fixed-model asymptotic framework. Under alternatives, the BIC will consistently select the "best" model among the candidates (see \citet{KeribinSan2000}), and our procedure then serves to assess whether this chosen model is nonetheless misspecified for clustering. The case where the number of candidate models grows with $n$ is a complex problem of post-selection inference that falls beyond the scope of this paper but remains an interesting avenue for future research.

The approach is developed by assuming independence between the observations. Extension to dependent data could be considered, in order to consider, for instance, hidden Markov chains with a finite number of states. In such a case, the empirical likelihood statistics should not be computed directly. Indeed, for dependent data, the empirical likelihood ratio does not converge to a chi-square random variable with $p$ degrees of freedom but to a weighted sum of $p$ independent chi-square random variables with 1 degree of freedom. To encompass this problem, blocking techniques could be considered as proposed by \citep{kitamura1997empirical} in the case of weakly dependent data and combined with our approach where we also manage blocks strategies for the nuisance parameter. The proof of Theorem \ref{thm:niveau} required a Berry-Essen concentration bound which is obtained under independence assumption in our context. Concentration results for weakly dependent data and growing dimension can be applied to control this step, but certain technical arguments would need to be adapted using results in \cite{MR3502600} and \cite{chang2024central}. 

\section*{Funding}
The authors have no funding to report.

\section*{Data availability}
The data underlying the illustration of this article were derived from sources in the public domain: the Congressional Voting Records data set \citep{schlimmer1987concept} is available on the UC Irvine Machine Learning Repository\footnote{https://archive.ics.uci.edu/dataset/105/congressional+voting+records} and  and 
the Graft-versus-Host Disease data \citep{brinkman2007high} is availble in the \textsc{R} package mclust \citep{scrucca2016mclust}.


\bibliographystyle{abbrvnat}
 \bibliography{biblio}
\begin{appendices} 
\section{Proof of the main results} \label{sec:appproof}

To state these results, we introduce four technical lemmas, presented here and proved in Section \ref{B.2}. For notational convenience, we recall that for $\boldsymbol{r}={r_1,\ldots,r_B}$, $r^{\dagger}$ denotes the maximum of $r_b$ over $b=1,\dots,B$ of the scalar magnitude of $X_b$, that is absolute value for real quantities, Euclidean norm for vectors, spectral norm for matrices.

Lemma~\ref{lem:controlMax} gives the stochastic order of $$V_{\est}=\max_{1\leq i \leq n_b} \|\Psi_{\model,p}(\bX_i^{(b)};\wtheta) \|_2. $$ 
\begin{lemma} \label{lem:controlMax}
Under the assumptions of Theorem \ref{thm:niveau},  we have 
 $V_{\est}=  o_\mathbb{P}(n^{\rho/2}p^{-1})$.
\end{lemma}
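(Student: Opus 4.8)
The plan is to split $V_{\est}$ with the triangle inequality, inserting the value of the statistic at the true minimizer $\btheta_{\model,0}$:
\begin{equation*}
V_{\est}\leq \max_{1\leq i\leq n_b}\bigl\|\Psi_{\model,p}(\bX_i^{(b)};\wtheta)-\Psi_{\model,p}(\bX_i^{(b)};\btheta_{\model,0})\bigr\|_2+\max_{1\leq i\leq n_b}\bigl\|\Psi_{\model,p}(\bX_i^{(b)};\btheta_{\model,0})\bigr\|_2,
\end{equation*}
and to control the two terms separately. The first term measures the error from plugging in $\wtheta$ and is directly dominated by the uniform bound recorded in the first item of the remark following Assumption~\ref{ass:main}, namely $\sup_{1\leq i\leq n}\|\Psi_{\model,p}(\bX_i;\btheta_{\model,0})-\Psi_{\model,p}(\bX_i;\wtheta)\|_2=O_\mathbb{P}(n^{-\tau}p^{1/2})$. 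It then suffices to check that $n^{-\tau}p^{1/2}=o(n^{\rho/2}p^{-1})$, equivalently $p^{3/2}=o(n^{\tau+\rho/2})$; this follows at once from $n^{-\tau}p^{6}=o(1)$ (another consequence stated in the same remark), which gives $p=o(n^{\tau/6})$ and hence $p^{3/2}=o(n^{\tau/4})=o(n^{\tau+\rho/2})$.

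The substantial part is the second term, which depends only on $\btheta_{\model,0}$ and which I would handle with a moment-based maximal inequality. Since $\bX_1^{(b)},\ldots,\bX_{n_b}^{(b)}$ are i.i.d.\ copies of $\bX$, a union bound combined with Markov's inequality applied to the $q_0$-th moment yields, for any $\epsilon>0$ and $t=\epsilon\,n^{\rho/2}p^{-1}$,
\begin{equation*}
\mathbb{P}\left(\max_{1\leq i\leq n_b}\bigl\|\Psi_{\model,p}(\bX_i^{(b)};\btheta_{\model,0})\bigr\|_2>t\right)\leq n_b\,\frac{\mathbb{E}\bigl[\|\Psi_{\model,p}(\bX_1^{(b)};\btheta_{\model,0})\|_2^{q_0}\bigr]}{t^{q_0}}.
\end{equation*}
Invoking the moment bound \eqref{eq:moment}, $\mathbb{E}[\|\Psi_{\model,p}(\bX_1^{(b)};\btheta_{\model,0})\|_2^{q_0}]\leq\tilde C p^{q_0/2+r_0}$, together with $n_b\sim n^{\rho}$ from Assumption~\ref{ass:main}-\ref{ass:growing1}, bounds the right-hand side by a constant multiple of $\epsilon^{-q_0}n^{\rho(1-q_0/2)}p^{3q_0/2+r_0}$.

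The final and decisive step is the rate verification. Substituting $p=[n^{\kappa}]$, the exponent of $n$ in the last display is $\rho(1-q_0/2)+\kappa(3q_0/2+r_0)$, and I must show it is strictly negative, i.e. $\kappa<\rho(q_0/2-1)/(3q_0/2+r_0)$. The hard part is the purely algebraic verification that Assumption~\ref{ass:main}-\ref{ass:growing3}, which imposes $\kappa<(\rho/6-1/(6q_0))/(1+r_0/q_0)$, is strong enough to force this inequality; this reduces to checking $(\rho q_0-1)(3q_0+2r_0)\leq 6\rho(q_0-2)(q_0+r_0)$, whose difference equals $\rho\bigl[3q_0(q_0-4)+4r_0(q_0-3)\bigr]+3q_0+2r_0$ and is therefore nonnegative for all $q_0\geq 4$, $r_0>0$ and $\rho>2/3$. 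With the exponent shown to be negative, the probability above tends to $0$ for every $\epsilon>0$, so the second term is $o_\mathbb{P}(n^{\rho/2}p^{-1})$; combined with the bound on the first term this gives $V_{\est}=o_\mathbb{P}(n^{\rho/2}p^{-1})$, as claimed.
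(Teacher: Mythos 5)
Your proof is correct and follows essentially the same route as the paper: the same triangle-inequality split isolating the plug-in error controlled by Assumption~\ref{ass:main}-\ref{ass:cvtheta2}, then a union bound plus Markov's inequality at the $q_0$-th moment via \eqref{eq:moment}, leading to the identical exponent $v_4=\rho(1-q_0/2)+\kappa(3q_0/2+r_0)$. If anything, your concluding algebra is more careful than the paper's: the paper verifies $v_4<0$ through the chain $v_4<\rho(1-q_0/4+r_0/6)\leq\rho(1-q_0/4)$, whose last step is inaccurate for $r_0>0$, whereas your reduction to $(\rho q_0-1)(3q_0+2r_0)\leq 6\rho(q_0-2)(q_0+r_0)$ uses the full strength of Assumption~\ref{ass:main}-\ref{ass:growing3} (including the factor $1+r_0/q_0$) and closes the argument cleanly.
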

Lemma~\ref{lem:covmatrix} gives a control of the stochastic order of the matrix   defined as the  difference between the empirical covariance matrix $ \bS_{\est}$  and the theoretical covariance matrix $\bSigma_{\model,p}$ as well as  a control of the stochastic order of the matrix $\bGamma_{\est}$ defined as the difference between the inverse of these matrices where
$$\bS_{\any}=\frac{1}{n_b}\sum_{i=1}^{n_b} \Psi_{\model,p}(\bX_i^{(b)};\btheta)\Psi_{\model,p}(\bX_i^{(b)};\btheta)^\top$$
and
$$\bGamma_{\any}=\bS_{\any}^{-1} -  \bSigma_{\model,p}^{-1}.$$

\begin{lemma} \label{lem:covmatrix}
Under the assumptions of Theorem \ref{thm:niveau}, there exists $\vartheta_1:=\rho/2 -\kappa(1+2(1+r_0)/q_0)>0$ such that
\begin{equation*}
 \left\| \bS_{\est} -  \bSigma_{\model,p}\right\|_{sp} =  O_{\mathbb{P}}(n^{-\vartheta_1}) \text{ and }   \|\bGamma_{\est} \|_{sp} =  O_{\mathbb{P}}(n^{-\vartheta_1}).
\end{equation*}
 \end{lemma}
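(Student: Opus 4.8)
The plan is to split the deviation into a \emph{plug-in} error, produced by substituting $\wtheta$ for $\btheta_{\model,0}$, and a \emph{statistical fluctuation} of the empirical second-moment matrix around its population counterpart,
\begin{equation*}
\bS_{\est} - \bSigma_{\model,p} = \big(\bS_{\est} - \bS_{\true}\big) + \big(\bS_{\true} - \bSigma_{\model,p}\big),
\end{equation*}
to bound the spectral norm of each summand, and then to substitute the growth rates $n_b\sim n^{\rho}$ and $p=[n^{\kappa}]$ of Assumption~\ref{ass:main}. The second claim on $\bGamma_{\est}$ I would then deduce from the first by a resolvent identity.

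For the plug-in term, set $\Delta_i = \Psi_{\model,p}(\bX_i^{(b)};\wtheta) - \Psi_{\model,p}(\bX_i^{(b)};\btheta_{\model,0})$, so the first bullet of the Remark gives $\max_i\|\Delta_i\|_2 = O_{\mathbb{P}}(n^{-\tau}p^{1/2})$. Expanding the outer products $\Psi(\wtheta)\Psi(\wtheta)^\top-\Psi(\btheta_{\model,0})\Psi(\btheta_{\model,0})^\top$ into the two cross terms $\Delta_i\Psi(\btheta_{\model,0})^\top$, $\Psi(\btheta_{\model,0})\Delta_i^\top$ and the quadratic term $\Delta_i\Delta_i^\top$, and bounding each rank-one block's spectral norm by the product of the vector norms, one gets
\begin{equation*}
\|\bS_{\est}-\bS_{\true}\|_{sp} \le 2\,\max_i\|\Delta_i\|_2\,\frac{1}{n_b}\sum_{i=1}^{n_b}\|\Psi_{\model,p}(\bX_i^{(b)};\btheta_{\model,0})\|_2 + \max_i\|\Delta_i\|_2^2.
\end{equation*}
Because $\mathbb{E}\|\Psi_{\model,p}(\bX;\btheta_{\model,0})\|_2^2 = \operatorname{tr}(\bSigma_{\model,p}) \le \sigma p$ by Assumption~\ref{ass:main}-\ref{ass:cov}, the empirical average is $O_{\mathbb{P}}(p^{1/2})$, which yields the plug-in rate $O_{\mathbb{P}}(n^{-\tau}p + n^{-2\tau}p)$.

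For the fluctuation term I would pass to the Frobenius norm, $\|\bS_{\true}-\bSigma_{\model,p}\|_{sp}\le\|\bS_{\true}-\bSigma_{\model,p}\|_F$, and control it entrywise,
\begin{equation*}
\mathbb{E}\|\bS_{\true}-\bSigma_{\model,p}\|_F^2 \le \frac{1}{n_b}\sum_{j,k=1}^{p}\mathbb{E}\big[\psi_{\model,\btheta_{\model,0},\varphi_{p,j}}(\bX)^2\,\psi_{\model,\btheta_{\model,0},\varphi_{p,k}}(\bX)^2\big].
\end{equation*}
By Cauchy--Schwarz, the Lyapunov inequality with $q_0\ge4$, and the moment growth $\mathbb{E}|\psi_{\model,\btheta_{\model,0},\varphi_{p,j}}(\bX)|^{q_0}\le\tilde C p^{r_0}$ of Assumption~\ref{ass:main}-\ref{ass:growing2}, each summand is at most $(\tilde C p^{r_0})^{4/q_0}$, giving $\mathbb{E}\|\bS_{\true}-\bSigma_{\model,p}\|_F^2 = O(n_b^{-1}p^{2+4r_0/q_0})$ and hence $\|\bS_{\true}-\bSigma_{\model,p}\|_{sp}=O_{\mathbb{P}}(n_b^{-1/2}p^{1+2r_0/q_0})$. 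Substituting $n_b\sim n^{\rho}$, $p=[n^{\kappa}]$, and using $\tau>1/3$ together with $\rho<2\tau$, all three exponents (fluctuation, and the two plug-in terms) are at most $-\vartheta_1$; indeed the binding fluctuation exponent equals $\kappa(1+2r_0/q_0)-\rho/2 = -\vartheta_1-2\kappa/q_0$, and the positivity $\vartheta_1>0$ is precisely what the bound on $\kappa$ in Assumption~\ref{ass:main}-\ref{ass:growing3} secures. This proves the first claim for a block $b$; the same rate holds uniformly over the $B_n\sim n^{1-\rho}$ blocks, as needed in \eqref{Appendix A: eq1}, by a union bound that exploits the full $q_0$-th moment control, which is where the slack $2/q_0$ in $\vartheta_1$ is spent. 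For the second claim I would use the resolvent identity
\begin{equation*}
\bGamma_{\est} = \bS_{\est}^{-1} - \bSigma_{\model,p}^{-1} = -\,\bS_{\est}^{-1}\big(\bS_{\est}-\bSigma_{\model,p}\big)\bSigma_{\model,p}^{-1}:
\end{equation*}
Assumption~\ref{ass:main}-\ref{ass:cov} gives $\|\bSigma_{\model,p}^{-1}\|_{sp}\le\varsigma^{-1}$, and by Weyl's inequality combined with the first claim the smallest eigenvalue of $\bS_{\est}$ exceeds $\varsigma/2$ with probability tending to one, so $\|\bS_{\est}^{-1}\|_{sp}=O_{\mathbb{P}}(1)$; sub-multiplicativity then yields $\|\bGamma_{\est}\|_{sp}=O_{\mathbb{P}}(n^{-\vartheta_1})$.

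The main obstacle is the fluctuation term: a matrix concentration statement in which the dimension $p$ grows with $n$ while only finitely many ($q_0$) moments of the $\psi$'s are available, and those moments themselves inflate like $p^{r_0}$. The delicate accounting is to interlock the entrywise growth $\tilde C p^{r_0}$ with the rates $n_b\sim n^{\rho}$ and $p=[n^{\kappa}]$ so that the exponent stays negative, and to preserve that negativity when upgrading the per-block estimate to a uniform statement over all $B_n$ blocks; the plug-in error and the matrix inversion are comparatively routine once the first claim and the eigenvalue lower bound are in hand.
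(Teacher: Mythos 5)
Your proof is correct for the lemma as stated, but your treatment of the fluctuation term $\bS_{\true}-\bSigma_{\model,p}$ takes a genuinely different route from the paper. The paper works with the entrywise max-norm $\upsilon_{\true}=\|\bSigma_{\model,p}-\bS_{\true}\|_{\max}$, invokes \citet[Lemma 4.4]{Hjort2009AOS} to get the full $q_0$-th moment tail $\mathbb{P}(\upsilon_{\true}\geq\varepsilon)\leq C p^{2}\varepsilon^{-q_0}n_b^{-q_0/2}a_{\true,q_0}^2$, and then passes to the spectral norm via $\|\cdot\|_{sp}\leq p\|\cdot\|_{\max}$; the factor $2\kappa/q_0$ inside $\vartheta_1$ comes from the union bound over the $p^2$ entries in that lemma, not from uniformity over blocks. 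Your route instead bounds the spectral norm by the Frobenius norm and computes a variance, using Lyapunov to reduce fourth moments to the assumed $q_0$-th moments; this is more elementary and self-contained, and it even yields the slightly sharper per-block exponent $-\rho/2+\kappa(1+2r_0/q_0)=-\vartheta_1-2\kappa/q_0$, as you correctly note. Your plug-in decomposition through $\Delta_i$ with the rank-one expansion is also cleaner than the paper's max-norm comparison $\widetilde{\upsilon}_{n,b}$, and both give $O_\mathbb{P}(n^{-\tau+\kappa})$, absorbed into $n^{-\vartheta_1}$ precisely because $\rho/2<\tau$ (Assumption~\ref{ass:main}-\ref{ass:growing1}), which is also how the paper argues. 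The inversion step via the resolvent identity, Weyl's inequality, and Assumption~\ref{ass:main}-\ref{ass:cov} is essentially identical to the paper's.

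One caveat on your closing remark: your Chebyshev-type argument delivers only an $\varepsilon^{-2}$ tail, whereas the paper's Hjort-type bound delivers an $\varepsilon^{-q_0}n^{-q_0\vartheta_1}$ tail, and it is the latter that the paper actually consumes downstream in Lemma~\ref{lem:vectorrate}: there the union bound over $B_n\sim n^{1-\rho}$ blocks costs a factor $B_n$ against the $q_0$-th power tail and \emph{degrades} the uniform rate to $\vartheta_3=\vartheta_1-(1-\rho)/q_0$, so your claim that ``the same rate holds uniformly over the blocks'' is not what the paper proves, and in any case is not supported by a second-moment bound (for $\rho\leq 3/4$ the exponent $1-2\rho+\kappa(2+4r_0/q_0)$ from a variance-based union bound need not be negative). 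To make the uniform statement you would need a Rosenthal-type inequality giving $q_0/2$-th moments of the Frobenius deviation, which amounts to re-deriving the paper's route. Since Lemma~\ref{lem:covmatrix} itself is a per-block statement, this does not invalidate your proof of it, but the uniformity remark should be removed or re-justified.
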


Lemma~\ref{lem:lagrange}  develops stochastic order of the Lagrange multipliers $\blambda_{\est} $ introduced in the Empirical Likelihood.  
\begin{lemma}\label{lem:lagrange} Considering $\bZ_{\est}$ defined in \eqref{Sec3:Zany}, under the assumptions of Theorem \ref{thm:niveau}, we have
$$
\| \bZ_{\est}\|_2=O_\mathbb{P}(p^{1/2})
$$
and
$$\blambda_{\est}  = n_b^{-1/2}\left(\bS_{\est}\right)^{-1} \bZ_{\est}  + \bbeta_{\est},$$ 
with  $ \|\blambda_{\est}\|_2  =O_\mathbb{P}( n^{-\rho/2} p^{1/2})$ and $ \left\|\bbeta_{\est}\right\|_2 =  O_\mathbb{P}(n^{-\vartheta_2})$ where $\vartheta_2:=\rho -\kappa(5/2+3r_0/q_0)>0$.
\end{lemma}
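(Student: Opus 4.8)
The plan is to establish the two claims in turn, treating Lemmas~\ref{lem:controlMax} and~\ref{lem:covmatrix} as black boxes. For the first claim I would split $\bZ_{\est}=\bZ_{\true}+(\bZ_{\est}-\bZ_{\true})$. The centered term $\bZ_{\true}=n_b^{-1/2}\sum_{i=1}^{n_b}\Psi_{\model,p}(\bX_i^{(b)};\btheta_{\model,0})$ is a normalized sum of i.i.d.\ vectors that are mean-zero under $\mathcal{H}_0$ by the moment condition \eqref{eq:momentcond}, so a second-moment computation gives $\mathbb{E}\|\bZ_{\true}\|_2^2=\mathrm{tr}(\bSigma_{\model,p})\le \sigma p$, hence $\|\bZ_{\true}\|_2=O_\mathbb{P}(p^{1/2})$ by Markov. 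For the perturbation, the first implication recorded after Assumption~\ref{ass:main}-\ref{ass:cvtheta2} yields $\sup_i\|\Psi_{\model,p}(\bX_i;\btheta_{\model,0})-\Psi_{\model,p}(\bX_i;\wtheta)\|_2=O_\mathbb{P}(n^{-\tau}p^{1/2})$, so the triangle inequality gives $\|\bZ_{\est}-\bZ_{\true}\|_2\le n_b^{1/2}\,O_\mathbb{P}(n^{-\tau}p^{1/2})=O_\mathbb{P}(n^{\rho/2-\tau}p^{1/2})$, which is $o_\mathbb{P}(p^{1/2})$ since $\rho<2\tau$ by Assumption~\ref{ass:main}-\ref{ass:growing1}. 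Together these give $\|\bZ_{\est}\|_2=O_\mathbb{P}(p^{1/2})$.

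For the expansion, write $\blambda:=\blambda_{\est}$ and $\Psi_i:=\Psi_{\model,p}(\bX_i^{(b)};\wtheta)$, so that the Lagrange first-order condition reads $\frac{1}{n_b}\sum_{i=1}^{n_b}\frac{\Psi_i}{1+\blambda^\top\Psi_i}=\bzero_p$. The first step is a preliminary order bound on $\|\blambda\|_2$ by Owen's device: writing $\blambda=r\mathbf{u}$ with $\|\mathbf{u}\|_2=1$ and $r\ge 0$, projecting the equation onto $\mathbf{u}$, and bounding every (positive) denominator by $1+rV_{\est}$ gives $\frac{r}{1+rV_{\est}}\,\sigma_{\min}(\bS_{\est})\le n_b^{-1/2}\|\bZ_{\est}\|_2$. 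Lemma~\ref{lem:covmatrix} and Weyl's inequality keep $\sigma_{\min}(\bS_{\est})\ge \varsigma/2$ with probability tending to one, while Lemma~\ref{lem:controlMax} and the first claim give $V_{\est}\,n_b^{-1/2}p^{1/2}=o_\mathbb{P}(1)$; rearranging then yields $\|\blambda\|_2=O_\mathbb{P}(n_b^{-1/2}p^{1/2})=O_\mathbb{P}(n^{-\rho/2}p^{1/2})$.

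The expansion itself follows from $\frac{1}{1+x}=1-x+\frac{x^2}{1+x}$ with $x=\blambda^\top\Psi_i$, which turns the estimating equation into $n_b^{-1/2}\bZ_{\est}-\bS_{\est}\blambda+\bR=\bzero_p$, where $\bR=\frac{1}{n_b}\sum_{i=1}^{n_b}\Psi_i\,(\blambda^\top\Psi_i)^2/(1+\blambda^\top\Psi_i)$. Solving gives $\blambda=n_b^{-1/2}\bS_{\est}^{-1}\bZ_{\est}+\bbeta_{\est}$ with $\bbeta_{\est}=\bS_{\est}^{-1}\bR$, the asserted form. To bound the remainder, I first note $\max_i|\blambda^\top\Psi_i|\le \|\blambda\|_2 V_{\est}=o_\mathbb{P}(1)$, so the denominators exceed $1/2$ with probability tending to one and $\|\bR\|_2\le 2\|\blambda\|_2^2\,\frac{1}{n_b}\sum_{i=1}^{n_b}\|\Psi_i\|_2^3$. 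The moment bound \eqref{eq:moment} and H\"older give $\mathbb{E}\|\Psi_i\|_2^3\le(\tilde C p^{q_0/2+r_0})^{3/q_0}=O(p^{3/2+3r_0/q_0})$, whence the empirical average is $O_\mathbb{P}(p^{3/2+3r_0/q_0})$; combining with $\|\blambda\|_2^2=O_\mathbb{P}(n^{-\rho}p)$ and $p=[n^{\kappa}]$ gives $\|\bR\|_2=O_\mathbb{P}(n^{-\rho}p^{5/2+3r_0/q_0})=O_\mathbb{P}(n^{-\vartheta_2})$ with $\vartheta_2=\rho-\kappa(5/2+3r_0/q_0)$, positive by Assumption~\ref{ass:main}-\ref{ass:growing3}. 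Finally $\|\bbeta_{\est}\|_2\le\|\bS_{\est}^{-1}\|_{sp}\|\bR\|_2=O_\mathbb{P}(n^{-\vartheta_2})$, since $\|\bS_{\est}^{-1}\|_{sp}=O_\mathbb{P}(1)$ by Lemma~\ref{lem:covmatrix}.

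I expect the preliminary bound on $\|\blambda\|_2$ to be the main obstacle, because it is self-referential: $\blambda$ sits inside the denominators $1+\blambda^\top\Psi_i$, and extracting a clean rate requires these denominators to stay bounded away from zero uniformly over the block. This is exactly where the rate of $V_{\est}$ from Lemma~\ref{lem:controlMax}---engineered so that $\|\blambda\|_2 V_{\est}=o_\mathbb{P}(1)$---must be combined with the lower bound on $\sigma_{\min}(\bS_{\est})$, and where Owen's feasibility constraint $1+\blambda^\top\Psi_i>0$ must be maintained throughout. A secondary point, if these bounds are later to feed the maximal control \eqref{Appendix A: eq1}, is that each rate should hold uniformly over the $B_n$ blocks; I would secure this by observing that the controlling suprema and moment bounds are taken over the whole sample rather than block by block.
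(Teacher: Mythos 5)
Your proposal is correct and follows essentially the same route as the paper's own proof: Owen's projection device combined with $V_{\est}$ from Lemma~\ref{lem:controlMax} and the spectral control from Lemma~\ref{lem:covmatrix} for the preliminary rate on $\|\blambda_{\est}\|_2$, then the identity $(1+x)^{-1}=1-x+x^{2}/(1+x)$ together with the third-moment bound to control the remainder $\bbeta_{\est}$. The only differences are cosmetic: the paper bridges the empirical third-moment average from $\wtheta$ back to $\btheta_{\model,0}$ explicitly via Minkowski's inequality and Assumption~\ref{ass:main}-\ref{ass:cvtheta2} (a step you gloss by applying \eqref{eq:moment} directly at $\wtheta$, though the uniform perturbation bound you invoke elsewhere repairs it immediately), while your explicit lower bound $\sigma_{\min}(\bS_{\est})\geq \varsigma/2$ via Weyl's inequality is in fact slightly cleaner than the paper's ``$O_\mathbb{P}(1)-o_\mathbb{P}(1)$'' phrasing, which implicitly relies on that same lower bound.
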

As a direct consequence of Lemma~\ref{lem:lagrange}, we have  $\left\|\bbeta_{\est}\right\|_2 =  o_\mathbb{P}(\|\blambda_{\est}\|_2 )$.\\

Lemma~\ref{lem:vectorrate} permits to state a stochastic order of the two quantities $Z^\dagger_{n,p,\wtheta}$ and $\Gamma^\dagger_{n,p,\wtheta}$.

\begin{lemma}\label{lem:vectorrate}
Under the assumptions of Theorem \ref{thm:niveau}, we have
$Z^\dagger_{n,p,\wtheta}=O_\mathbb{P}(n^{\kappa/2} + \ln^{1/2} n)$   and $\Gamma^\dagger_{n,p,\wtheta}=O_\mathbb{P}(n^{-\vartheta_3})$ where $\vartheta_3:=\vartheta_1 - (1-\rho)/q_0>0$.
\end{lemma}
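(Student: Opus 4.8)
The plan is to prove the two claims separately, in each case turning the single-block estimates already available---Lemma~\ref{lem:lagrange} for $\bZ_{\est}$ and Lemma~\ref{lem:covmatrix} for $\bGamma_{\est}$---into a bound that is stable under the maximum over the $B_n$ blocks, where $B_n$ is of order $n^{1-\rho}$. A first common step is to replace $\wtheta$ by $\btheta_{\model,0}$, so that the blocks become functionals of i.i.d.\ centered vectors (centering holds under $\mathcal{H}_0$). For $\bZ$ this uses $\bZ_{\est} - \bZ_{\true} = n_b^{-1/2}\sum_{i=1}^{n_b}[\Psi_{\model,p}(\bX_i^{(b)};\wtheta) - \Psi_{\model,p}(\bX_i^{(b)};\btheta_{\model,0})]$, bounded by $n_b^{1/2}$ times the uniform perturbation rate $O_\mathbb{P}(n^{-\tau}p^{1/2})$ of the remark following Assumption~\ref{ass:main}; since $\rho<2\tau$ this is $o_\mathbb{P}(p^{1/2})$ uniformly in $b$, so it suffices to control $\max_b\|\bZ_{\true}\|_2$. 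The analogous perturbation for the covariance matrices, $\bS_{\est}-\bS_{\true}$, is expanded into two rank-type terms and bounded with the same uniform rate together with Lemma~\ref{lem:controlMax}, so it does not affect the order below.

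For $Z^\star_{n,p,\wtheta}$ I would not center $\|\bZ_{\true}\|_2$ at zero but at $\sqrt{\mathrm{tr}(\bSigma_{\model,p})}$, which is of order $p^{1/2}=n^{\kappa/2}$ because $\varsigma p\le \mathrm{tr}(\bSigma_{\model,p})\le \sigma p$. The remaining fluctuation $\|\bZ_{\true}\|_2 - \sqrt{\mathrm{tr}(\bSigma_{\model,p})}$ concentrates much more tightly, and the goal is to show that its maximum over the $B_n$ blocks is only $O_\mathbb{P}(\ln^{1/2}n)$. Combining the two contributions gives $Z^\star_{n,p,\wtheta}=O_\mathbb{P}(n^{\kappa/2}+\ln^{1/2}n)$; the first term in fact dominates, but both appear naturally in the argument.

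For $\Gamma^\star_{n,p,\wtheta}$ a plain union bound driven by moments is enough, and it is this step that produces the loss $(1-\rho)/q_0$ in $\vartheta_3$. Following the computation behind Lemma~\ref{lem:covmatrix} but extracting a moment bound rather than a stochastic order, I would establish $\mathbb{E}\|\bS_{\true}-\bSigma_{\model,p}\|_{sp}^{q_0}=O(n^{-\vartheta_1 q_0})$ per block. Markov's inequality and a union bound over the $B_n$ blocks then give, for $t=Mn^{-\vartheta_1+(1-\rho)/q_0}=Mn^{-\vartheta_3}$, a probability of order $B_n t^{-q_0}n^{-\vartheta_1 q_0}=O(M^{-q_0})$, so $\max_b\|\bS_{\true}-\bSigma_{\model,p}\|_{sp}=O_\mathbb{P}(n^{-\vartheta_3})$, and the reduction of the first step transfers this to $\bS_{\est}$ ($\vartheta_3>0$ being ensured by Assumption~\ref{ass:main}). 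The passage to $\bGamma_{\est}=\bS_{\est}^{-1}-\bSigma_{\model,p}^{-1}$ follows from the identity $\bGamma_{\est}=-\bS_{\est}^{-1}(\bS_{\est}-\bSigma_{\model,p})\bSigma_{\model,p}^{-1}$: on the event where $\max_b\|\bS_{\est}-\bSigma_{\model,p}\|_{sp}\le \varsigma/2$ (of probability tending to one), Weyl's inequality gives $\sigma_1(\bS_{\est})\ge \varsigma/2$, hence $\|\bGamma_{\est}\|_{sp}\le 2\varsigma^{-2}\|\bS_{\est}-\bSigma_{\model,p}\|_{sp}$ and $\Gamma^\star_{n,p,\wtheta}=O_\mathbb{P}(n^{-\vartheta_3})$.

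The main obstacle is the fluctuation estimate for $Z^\star$: the naive union bound based on \eqref{eq:moment} only yields $O_\mathbb{P}(B_n^{1/q_0}p^{1/2})=O_\mathbb{P}(n^{(1-\rho)/q_0+\kappa/2})$, strictly larger than the target, so the polynomial moments must be exploited more sharply. The plan is to truncate each summand at a level $\tau_n$, apply a Bernstein/Bennett inequality to the bounded truncated part---whose sub-exponential tails lose only a $\sqrt{\ln B_n}$ factor under the maximum---and discard the tail using $\mathbb{P}(\exists\, i\le n:\|\Psi_{\model,p}(\bX_i;\btheta_{\model,0})\|_2>\tau_n)\le n\tilde C p^{q_0/2+r_0}\tau_n^{-q_0}$. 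The crux is that $\tau_n$ must satisfy $n^{1/q_0}p^{1/2+r_0/q_0}\ll \tau_n\ll n^{\rho/2}$ for both pieces to be simultaneously negligible, and it is precisely the constraints $\rho>2/3$, $q_0\ge 4$ and the smallness of $\kappa$ in Assumption~\ref{ass:growing3} that make such a choice possible.
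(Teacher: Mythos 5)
Your overall architecture matches the paper's on the $\Gamma^\star$ half and takes a genuinely different route on the $Z^\star$ half. For $\Gamma^\star_{n,p,\wtheta}$, the paper does essentially what you propose: a per-block Markov-type tail bound at $\btheta_{\model,0}$ (its inequality \eqref{eq:bounddiffsigma}, equivalent to your moment bound $\mathbb{E}\|\bS_{\true}-\bSigma_{\model,p}\|_{sp}^{q_0}=O(n^{-q_0\vartheta_1})$), a union bound over the $B_n\asymp n^{1-\rho}$ blocks producing exactly the loss $(1-\rho)/q_0$, and the resolvent identity plus Weyl's inequality for the passage to inverses; your event $\{\max_b\|\bS_{\est}-\bSigma_{\model,p}\|_{sp}\le\varsigma/2\}$ is, if anything, a cleaner packaging of the paper's ``$\sigma_1(\bS_{\est})=O_\mathbb{P}(1)$'' step. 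For $Z^\star$, the paper instead whitens, $\bG_{\true}=\bSigma_{\model,p}^{-1/2}\bZ_{\true}$, invokes the Berry--Esseen bound of \citet{bentkus2003dependence} over convex sets, $\Delta_n\le 400\,p^{1/4}\,\mathbb{E}\|\bSigma_{\model,p}^{-1/2}\Psi_{\model,p}(\bX;\btheta_{\model,0})\|_2^3\, n_b^{-1/2}$, transfers to the maximum by independence at cost $B_n\Delta_n=O(n^{1-3\rho/2+\kappa(7/4+3r_0/q_0)})=o(1)$, and concludes from the order $O_\mathbb{P}(p+\ln B_n)$ of the maximum of $B_n$ i.i.d.\ $\chi^2_p$ variables. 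Your truncation-plus-concentration scheme avoids any distributional comparison, and your window $n^{1/q_0}p^{1/2+r_0/q_0}\ll\tau_n\ll n^{\rho/2}$ is indeed non-empty under Assumption~\ref{ass:main}-\ref{ass:growing3} (it reduces to $q_0>5/(2\rho)$, which holds since $q_0\ge 4$ and $\rho>2/3$). Two details must be nailed down, though: the concentration of the truncated norm requires a Talagrand/Bousquet-type inequality for $\sup_{\|v\|_2\le 1}v^\top\bZ$ with weak variance $\sup_{\|v\|_2\le1}\mathbb{E}[(v^\top\Psi_{\model,p}(\bX;\btheta_{\model,0}))^2]\le\sigma$ --- a scalar Bernstein or bounded-differences bound gives the variance proxy $\tau_n^2$, which is useless --- and the recentering bias $n_b^{1/2}\|\mathbb{E}[\Psi_{\model,p}(\bX;\btheta_{\model,0})\mathds{1}_{\{\|\Psi_{\model,p}(\bX;\btheta_{\model,0})\|_2>\tau_n\}}]\|_2$ must be shown negligible (it is, in your window). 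Note also that only upper deviations of $\|\bZ_{\true}\|_2$ are needed, which simplifies your centering step.

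There is, however, one step that fails quantitatively as written: the transfer of the covariance bound from $\btheta_{\model,0}$ to $\wtheta$. Expanding $\bS_{\est}-\bS_{\true}$ into two rank-one cross terms and bounding them via the uniform perturbation rate together with Lemma~\ref{lem:controlMax} yields at best $O_\mathbb{P}(n^{-\tau}p^{1/2})\cdot o_\mathbb{P}(n^{\rho/2}p^{-1})=o_\mathbb{P}(n^{-(\tau-\rho/2+\kappa/2)})$. Since Assumption~\ref{ass:main}-\ref{ass:growing1} only guarantees $\tau>\rho/2$, the exponent $\tau-\rho/2+\kappa/2$ can be arbitrarily close to zero (take, \emph{e.g.}, $\rho=0.7$ and $\tau=0.36$, admissible since $\tau>1/3$ and $2\tau>\rho$), whereas $\vartheta_3$ stays bounded away from zero; this perturbation term then dominates $n^{-\vartheta_3}$, so your claim that it ``does not affect the order below'' does not follow --- your argument would still give $\Gamma^\star_{n,p,\wtheta}=o_\mathbb{P}(1)$, but not the stated rate. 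The paper avoids this by bounding the perturbation entrywise rather than through the norm maxima: by Assumption~\ref{ass:main}-\ref{ass:cvtheta2} applied coordinatewise, $\|\bS_{\est}-\bS_{\true}\|_{sp}\le p\,\|\bS_{\est}-\bS_{\true}\|_{\max}=O_\mathbb{P}(n^{-\tau+\kappa})$ (up to a harmless empirical-moment factor of order $p^{r_0/q_0}$ coming from the cross terms), and since $\vartheta_3<\vartheta_1<\rho/2-\kappa<\tau-\kappa$, this perturbation is $o_\mathbb{P}(n^{-\vartheta_3})$. Replace your rank-type expansion by this entrywise bound and the rest of your proof goes through.
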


\begin{proof}[Proof of Theorem~\ref{thm:niveau}]
We need to show that 
\begin{equation}\label{Appendix A: eq1}
\max_{1\leq b\leq B} |Y_{\est}-W_{\true}|=o_\mathbb{P}(1),    
\end{equation}
where $Y_{\est}$ and $W_{\true}$ are defined respectively in \eqref{Sec2: Yany} and \eqref{Sec3: W}. From the previously stated lemmas, we can show \eqref{Appendix A: eq1}. Note that the first part of the proof  generalizes some results stated by \citet{owen2001empirical} to the case of growing dimension and provides more accurate stochastic orders of remainders terms in order to be able to work with the maximum of the statistics over the $B$ blocks.  Let $U_{\any,i}=\Psi_{\model,p}(\bX_i^{(b)};\btheta)^\top\blamany$. For any $\btheta$ such that for any $\max_{1\leq i \leq n_b}|U_{\any,i}|=o_\mathbb{P}(1)$,  by a third order Taylor expansion of the $\ln(1+u)$ around $u=0$, we have  
\begin{equation}\label{eq:taylorxi0theta}
Y_{\any}  = 2n_b^{1/2} \blamany ^\top \bZany  
- n_b  \blamany^\top \bS_{\any} \blamany + \eta_{\any} ,
\end{equation}
with
\begin{equation}\label{A2:eta}
\eta_{\any} \leq O(\|\blamany\|_2^3) \sum_{i=1}^{n_b} \left\| \Psi_{p,\model}(\bX_i^{(b)};\btheta ) \right\|_2^3.
\end{equation}
Noting that for any $\btheta$,  
\begin{equation}\label{eq:Uborne}
\max_{1\leq i \leq n_b} |U_{\any,i}| \leq \|\blamany\|_2 V_{\any},
\end{equation}
combining Lemmas~\ref{lem:controlMax} and \ref{lem:lagrange}  implies  $$\max_{1\leq i \leq n_b} |U_{\est,i}|  =o_\mathbb{P}(p^{-1/2}),$$ leading that Taylor expansion defined by \eqref{eq:taylorxi0theta} can be considered at $\btheta=\wtheta$. We now establish the stochastic order of $\eta_{\est}$. Assumption~\ref{ass:main}-\ref{ass:growing2} implies \eqref{eq:moment}, so by Hölder's inequality    for any integer $s$ such that $s\leq q_0$, we have
\begin{equation}\label{eq:momentS}
\mathbb{E} \|\Psi_{\model,p}(\bX_i^{(b)};\stheta)\|_2^{s}=O(p^{s/2+sr_0/q_0}).
\end{equation}
In addition, Minkowski's inequality implies that
\begin{equation} \label{eq:Minkowski}
 \|\Psi_{\model,p}(\bX_i^{(b)};\wtheta)\|_2^{s} \leq 2^{s/2} \left(\|\Psi_{\model,p}(\bX_i^{(b)};\stheta)\|_2^s + \|\Psi_{\model,p}(\bX_i^{(b)};\wtheta) - \Psi_{\model,p}(\bX_i^{(b)};\stheta) \|_2^s \right).
\end{equation}
Since $ \max_{1\leq i \leq n} \|\Psi_{\model,p}(\bX_i;\stheta) - \Psi_{\model,p}(\bX_i;\wtheta)\|_2=O_\mathbb{P}(n^{-\tau}p^{1/2})$ and $n^{-\tau}p^{1/2}=o(p^{3/2+3r_0/q_0})$,  we have
$$\mathbb{E} \|\Psi_{\model,p}(\bX_1;\wtheta)\|_2^{3}=O(p^{3/2+3r_0/q_0}).$$
Law of Large Number and Assumption~\ref{ass:main}-\ref{ass:growing2} imply that 
$$\frac{1}{n_b}\sum_{i=1}^{n_b}  \|\Psi_{\model,p}(\bX_i^{(b)};\wtheta)\|_2^3=O_\mathbb{P}(n^{\kappa(3/2+3r_0/q_0)}).$$ 
Therefore, the control of norm of Lagrange multipliers stated by Lemma~\ref{lem:lagrange} leads to
$$
\eta_{\est} = O_\mathbb{P}(n^{-\rho/2+\kappa(3/2+3r_0/q_0)}).
$$
Note that Assumption~\ref{ass:main}-\ref{ass:growing3} implies that $\lim_{n\to\infty} n^{-\rho/2+\kappa(3+3r_0/q_0)}=0$ and thus $\eta_{\est}=o_\mathbb{P}(1)$. Now, using Lemma~\ref{lem:lagrange}, to replace  the Lagrange multipliers by their asymptotic developments in the right-hand side of \eqref{eq:taylorxi0theta} evaluated at $\wtheta$ leads to
\begin{equation}\label{A2:Ytrue}
Y_{\est}  = W_{\est} +  \varepsilon_{1,\est}  + \varepsilon_{2,\est}   + \eta_{\est},
\end{equation}
with 
\begin{equation}\label{A2: eps}
\left\{
\begin{array}{rl}
\varepsilon_{1,\est} &= \bZ_{\est}^{\top}\bGamma_{\est} \bZ_{\est}  \\
 \varepsilon_{2,\est} &=  - n_b \bbeta_{\est}^{\top}  \bS_{\est} \bbeta_{\est}\\
\end{array}
\right.   .
\end{equation}

The triangular inequality implies
$$
\max_{1\leq b \leq B}|Y_{\est}  -   W_{\est}| \leq  \varepsilon^{\dagger}_{1,\estall}  + \varepsilon^{\dagger}_{2,\estall}   + \eta^{\dagger}_{\estall}.
$$
and,
$$
|\varepsilon_{1,\est}|\leq \|\bGamma_{\est}  \|_{sp} \|\bZ_{\est} \|_2^2,
$$
leading that
 $$\varepsilon^{\dagger}_{1,\estall}\leq Z_{n,p,\wtheta}^{\dagger 2}\Gamma_{n,p,\wtheta}^\dagger,$$
 where $Z_{n,p,\wtheta} =\max_{b=1,\ldots,B}\| \bZ_{\est} \|_2$
 Using Lemma~\ref{lem:vectorrate} and Assumption~\ref{ass:main}-\ref{ass:growing3}, we have  $$\varepsilon^{\dagger}_{1,\estall} = O_\mathbb{P}(n^{ -\vartheta_3}(n^{\kappa} + \ln n)).$$ 
Note that $n^{-\vartheta_3}\ln n = o(n^{-\vartheta_1-\kappa+1/q_0})$ and $n^{-\vartheta_3 + \kappa} = o(1)$, thus, using Assumption~\ref{ass:main}-\ref{ass:growing3}, we have
$$ \varepsilon^{\dagger}_{1,\estall} = o_\mathbb{P}(1).$$
Triangular inequality implies that
$$
|\varepsilon_{2,\est}|\leq n_b \|\bS_{\est}\|_{sp} \|\bbeta_{\est}\|_2^2.
$$
Using Lemma~\ref{lem:covmatrix} and Assumption~\ref{ass:main}-\ref{ass:cov} imply that $\|\bS_{\est}\|_{sp} =O_\mathbb{P}(1)$. Combining this with  Lemma~\ref{lem:lagrange} ensures that
$$\varepsilon_{2,\est}  = O_\mathbb{P}(n^{\rho-2\vartheta_2}).$$ 
Thus, using  the union bound, we have
 $$
\varepsilon^{\dagger}_{2,\estall}=O_\mathbb{P}(n^{1-2\rho-2\vartheta_2}).
$$
We have $1-2\rho-2\vartheta_2=1-4\rho+\kappa(5+6r_0/q_0)$, therefore $n^{1-2\rho-2\vartheta_2}=n^{1-3\rho}n^{-\rho +\kappa(5+6r_0/q_0)}$. Note that by Assumption~\ref{ass:main}-\ref{ass:growing2}, $\rho>1/3$ leading that $n^{1-3\rho}=o(1)$ and $n^{-\rho +\kappa(5+6r_0/q_0)}=o(1)$ by Assumption~\ref{ass:main}-\ref{ass:growing3}. Thus,
 $$
\varepsilon^{\dagger}_{2,\estall}=o_\mathbb{P}(1).
$$
Using \eqref{A2:eta}, we have
$$
 \eta^{\dagger}_{\estall} \leq \max_{1\leq b \leq B} \sum_{i=1}^{n_b} \|\Psi_{p,\model}(\bX_i^{(b)};\wtheta ) \|_2^3 \max_{1\leq b \leq B}\|\lambda_{\est}\|_2^3 .
$$
Using the union bound, we have
$$
\mathbb{P}(\max_{1\leq b \leq B} \sum_{i=1}^{n_b} \|\Psi_{p,\model}(\bX_i^{(b)};\stheta ) \|_2^3 \geq \varepsilon) \leq \sum_{b=1}^{B} \mathbb{P}(\sum_{i=1}^{n_b} \|\Psi_{p,\model}(\bX_i^{(b)};\stheta ) \|_2^3\geq \varepsilon).
$$
Markov's inequality implies that for any $s>0$  
$$
\mathbb{P}(\sum_{i=1}^{n_b} \|\Psi_{p,\model}(\bX_i^{(b)};\stheta ) \|_2^3\geq \varepsilon) \leq n_b \frac{\mathbb{E} \|\Psi_{p,\model}(\bX_i^{(b)};\stheta) \|_2^{3s} }{\varepsilon^s}.
$$
Since $\sum_{b=1}^{B}n_b = n  $, using the previous inequality with $s=q_0/3$, we have
$$
\mathbb{P}(\max_{1\leq b \leq B} \sum_{i=1}^{n_b} \|\Psi_{p,\model}(\bX_i^{(b)};\stheta) \|_2^3 \geq \varepsilon) \leq \frac{n \mathbb{E} \|\Psi_{p,\model}(\bX_i^{(b)};\stheta ) \|_2^{q_0} }{\varepsilon^{q_0/3}}.
$$
Using the order of $\mathbb{E} \|\Psi_{p,\model}(\bX_i^{(b)};\stheta ) \|_2^{q_0} $ given by \eqref{eq:moment}, we have that there exists $\tilde{C}>0$ such that
$$
\mathbb{P}(\max_{1\leq b \leq B} \sum_{i=1}^{n_b} \|\Psi_{p,\model}(\bX_i^{(b)};\stheta ) \|_2^3 \geq \varepsilon) \leq  \tilde{C} \frac{ n^{q_0(3/q_0 + \kappa(3/2 + 3 r_0/q_0 ))/3  }}{\varepsilon^{q_0/3}}.
$$
Therefore, we have
$$
\max_{1\leq b \leq B} \sum_{i=1}^{n_b} \|\Psi_{p,\model}(\bX_i^{(b)};\stheta) \|_2^3 = O_\mathbb{P}(n^{3/q_0 + \kappa(3 + 3 r_0/q_0)}).
$$
Similarly, we can show that
$$
\max_{1\leq b \leq B} \sum_{i=1}^{n_b} \|\Psi_{p,\model}(\bX_i^{(b)};\stheta ) \|_2^2 = O_\mathbb{P}(n^{2/q_0 + \kappa(2 + 2 r_0/q_0)}).
$$
Since for any $1\leq b \leq B$ and $1\leq i \leq n_b$, we have
$$
|\|\Psi_{\model,p}(\bX_i^{(b)};\wtheta)\|_2-\|\Psi_{\model,p}(\bX_i^{(b)};\stheta))\|_2|\leq \max_{1\leq i\leq n} \|\Psi_{\model,p}(\bX_i;\wtheta) - \Psi_{\model,p}(\bX_i;\stheta) \|_2,
$$
then there exists a positive constant $C$ such that

\begin{equation*}
\begin{aligned}
\bigl|
\|\Psi_{\model,p}(\bX_i^{(b)};\wtheta)\|_2^3
-
\|\Psi_{\model,p}(\bX_i^{(b)};\stheta)\|_2^3
\bigr|
\\
\leq\;
C \,
\|\Psi_{\model,p}(\bX_i^{(b)};\stheta)\|_2^2
\max_{1\leq i\leq n}
\bigl\|
\Psi_{\model,p}(\bX_i;\wtheta)
-
\Psi_{\model,p}(\bX_i;\stheta)
\bigr\|_2 .
\end{aligned}
\end{equation*}

Hence, by Assumption~\ref{ass:main}-\ref{ass:cvtheta2}, 
\begin{multline*}
|\max_{1\leq b \leq B} \sum_{i=1}^{n_b} \|\Psi_{p,\model}(\bX_i^{(b)};\wtheta ) \|_2^3  - \max_{1\leq b \leq B} \sum_{i=1}^{n_b} \|\Psi_{p,\model}(\bX_i^{(b)};\stheta ) \|_2^3 | \\ \leq  O_\mathbb{P}(n^{-\tau + \kappa/2}) \max_{1\leq b \leq B} \sum_{i=1}^{n_b} \|\Psi_{p,\model}(\bX_i^{(b)};\stheta ) \|_2^2.    
\end{multline*}
Therefore,
$$
\max_{1\leq b \leq B} \sum_{i=1}^{n_b} \|\Psi_{p,\model}(\bX_i^{(b)};\wtheta ) \|_2^3 = \max_{1\leq b \leq B} \sum_{i=1}^{n_b} \|\Psi_{p,\model}(\bX_i^{(b)};\stheta ) \|_2^3 + O_\mathbb{P}(n^{-\tau+2/q_0 + \kappa(5/2 + 2r_0/q_0)}).
$$
Noting that $n^{-\tau+2/q_0 + \kappa(5/2 + 2r_0/q_0)}=o(n^{3/q_0 + \kappa(3 + 3 r_0/q_0)})$, we have
$$
\max_{1\leq b \leq B} \sum_{i=1}^{n_b} \|\Psi_{p,\model}(\bX_i^{(b)};\wtheta ) \|_2^3 = O_\mathbb{P}(n^{3/q_0 + \kappa(3 + 3 r_0/q_0)}).
$$
%
In addition, using the definition of $\blambda_{\est}$, we have
$$
\lambda^{\dagger}_{\estall} \leq n_b^{-1/2}
\left(\bS_{\estall}^{-1} \bZ_{\estall}\right)^{\dagger}  + \bbeta^{\dagger}_{\estall},
$$
Since $\|\bbeta_{\est}\|_2=o_\mathbb{P}(\|\lambda_{\est}\|_2)$, then we have
$$
\lambda^{\dagger}_{\estall}(1+o_\mathbb{P}(1)) \leq  n_b^{-1/2}\left(\bS_{\estall}^{-1} \bZ_{\estall}\right)^{\dagger},
$$
This implies that
$$
\lambda^{\dagger}_{\estall}= O_\mathbb{P}(n^{-\rho/2})  Z^\dagger_{n,p,\wtheta} \left(\bS_{\estall}^{-1}\right)^{\dagger}.
$$
We have
$$
\left(\bS_{\estall}^{-1}\right)^{\dagger} \leq \|\bSigma_{\model,p}^{-1}\|_{sp} + \Gamma^\dagger_{n,p,\wtheta}.
$$
Assumptions~\ref{ass:main}-\ref{ass:cov} ensure that $\|\bSigma_{\model,p}^{-1}\|_{sp} =O(1)$ and Assumption~\ref{ass:main}-\ref{ass:growing3} ensures that $\vartheta_3>0$ leading by Lemma~\ref{lem:vectorrate} that $\Gamma^\dagger_{n,p,\wtheta}=o_\mathbb{P}(1)$. Hence,   we have
$$
\left(\bS_{\estall}^{-1}\right)^{\dagger}= O_\mathbb{P}(1).
$$
Hence, using the stochastic order of $ Z^\dagger_{n,p,\wtheta}$ stated  by Lemma~\ref{lem:vectorrate}, we have
$$
\lambda_{\estall}^{\dagger} = O_\mathbb{P}(n^{-\rho/2} [\ln^{1/2}n + n^{\kappa/2}]).
$$
Therefore, 
$$
 \eta^{\dagger}_{\estall} = O_\mathbb{P}(n^{-(3\rho - 6/q_0 - \kappa(6+6r_0/q_0))/2}[\ln^{3/2}n + n^{3\kappa/2}]) +  O_\mathbb{P}(n^{- \tau - \rho/2 +\kappa/2}[\ln^{3/2}n + n^{3\kappa/2}]).
$$

Hence, using Assumption~\ref{ass:main}-\ref{ass:growing3}, we have $n^{-(3\rho - 6/q_0 - \kappa(6+6r_0/q_0))/2}\ln^{3/2} n=o(n^{-\rho})$ and $n^{- \tau - \rho/2 +\kappa/2}\ln^{3/2} n=o(n^{-\tau})$ leading that
$\eta^{\dagger}_{\estall}= o_\mathbb{P}(1)$. And so,

\begin{equation} \label{eq:maxcv}
\left(Y_{\estall}  -   W_{\estall}\right)^{\dagger}= o_\mathbb{P}(1).
\end{equation}
We now need to control $\widetilde{W}_{\model,n,p}^{\dagger}$ with 

$$\widetilde{W}_{\model,n,p}^{\dagger}=\max_{1\leq b \leq B} 
| W_{\true} -  W_{\est}|.$$

Let $\widetilde{\bZ}_{\model,n,p,b}=\bZ_{\true} - \bZ_{\est}$. Note that, we have
$$
\widetilde{\bZ}_{\model,n,p}^{\dagger} \leq \max_{1\leq i\leq n} \|\Psi_{\model,p}(\bX_i;\wtheta) - \Psi_{\model,p}(\bX_i;\stheta) \|_2 \max_{1\leq b \leq B}n_b^{\rho/2}.
$$
Hence, we have
$$
\widetilde{\bZ}_{\model,n,p}^{\dagger}  = O_\mathbb{P}(n^{-\tau +  \rho/2 + \kappa/2})
$$
and
$$
\widetilde{W}_{\model,n,p,b} = \widetilde{\bZ}_{\model,n,p,b}^\top \bSigma_{\model,p}^{-1}(2\bZ_{\est} - \widetilde{\bZ}_{\model,n,p,b}).
$$
By using triangular inequality and Assumption~\ref{ass:main}-\ref{ass:cov}, there exists a positive constant $C$ such that
$$
\widetilde{W}_{\model,n,p}^{\dagger} \leq 
C\widetilde{\bZ}_{\model,n,p}^{\dagger} (2   Z^\dagger_{n,p,\wtheta} +\widetilde{\bZ}_{\model,n,p}^{\dagger}).
$$
Therefore, we have
$$
\widetilde{W}_{\model,n,p}^{\dagger} = O_\mathbb{P}(n^{-\tau + \rho/2+\kappa/2}[n^{\kappa} + \ln^{1/2}n]).
$$
Hence, by Assumptions~\ref{ass:main}-\ref{ass:cvtheta2}, \ref{ass:main}-\ref{ass:growing1} and \ref{ass:main}-\ref{ass:growing3}, we have
\begin{equation} \label{eq:maxcv2}
\widetilde{W}_{\model,n,p}^{\dagger} = o_\mathbb{P}(1).
\end{equation}
Combining \eqref{eq:maxcv} and \eqref{eq:maxcv2} provides \eqref{Appendix A: eq1}. 
Noting that $W_{\trueall}=(W_{\trueall,1},\ldots,W_{\trueall,B})^\top$ is a continuous random variable, as a direct consequence of the convergence in probability, this implies that
$$
\lim_{n\to \infty} \| F_{Y_{\estall}} - F_{W_{\trueall}}\|_\infty = 0,
$$
where $F_{Y_{\estall}} $ and $F_{W_{\trueall}}$ are  the cumulative distribution function of the $B$-dimensional vectors $Y_{\estall}=(Y_{\estall,1},\ldots,Y_{\estall,B})^\top$ and $W_{\trueall}$ respectively. To conclude the proof it suffices to show that
\begin{equation}\label{eq:obj2}
\lim_{n\to \infty} \| F_{W_{\trueall}} - \prod_{b=1}^B F_{\mathcal{X}^2_p}\|_\infty = 0,
\end{equation}
where  $F_{\mathcal{X}^2_p}$ denotes  the cumulative distribution function of a chi-square random variable with $p$ degrees of freedom.

Let $\overline{\bZ}_{\true}=\bSigma_{\model,p}^{-1/2}\bZ_{\true}$ be the $p$-variate vector with non-correlated components and having $F_{\overline{\bZ}_{\true}}$ as cumulative distribution function. We have for any $t\in\mathbb{R}$, 
$$
F_{W_{\true}}(t) - F_{\mathcal{X}^2_p}(t) = \int_{\|\overline{\bZ}_{\true}\|_2^2\leq t} dF_{\overline{\bZ}_{\true}} -  \int_{\|\overline{\bZ}_{\true}\|_2^2\leq t} d\Phi_{p},
$$
where $F_{\mathcal{X}^2_p}$ is the cumulative function of a chi-square random variable with $p$ degrees of freedom. 
Note that, for any $t\in\mathbb{R}^+$, we have
$$
\left| \int_{\|\overline{\bZ}_{\true}\|_2^2\leq t} dF_{\overline{\bZ}_{\true}} -  \int_{\|\overline{\bZ}_{\true}\|_2^2\leq t} d\Phi_{p}\right| \leq \Delta_n,
$$
with
$$\Delta_n= \sup_{A \in \mathcal{C}} |\mathbb{P}(\overline{\bZ}_{\true} \in A) - \bnu(A)|,$$
 where $\mathcal{C}$ is the class of convex subsets of $\mathbb{R}^p$ and $\bnu$ is the standard $p$ dimensional normal distribution. From \citet{bentkus2003dependence}, we have
 $$
 \Delta_n \leq 400 p^{1/4} \mathbb{E}[\|\bSigma_{\model,p}^{-1/2}\Psi_{\model,p}(\bX;\stheta) \|_2^3] n_b^{-1/2}.
 $$
 Noting that by Assumptions~\ref{ass:main}-\ref{ass:cov}, we have $\mathbb{E}[\|\bSigma_{\model,p}^{-1/2}\Psi_{\model,p}(\bX;\stheta) \|_2^3] =\mathbb{E}[\|\Psi_{\model,p}(\bX;\stheta) \|_2^3]$.  Hence, using \eqref{eq:momentS}, we have
 \begin{equation}\label{eq:ratechi2}
\| F_{W_{\true}} - F_{\mathcal{X}^2_p}\|_\infty = O(  n^{-\rho/2 + \kappa\left(\frac{7}{4} + \frac{3r_0}{q_0}\right)}).
 \end{equation}
 
Note that the vector $W_{\true}$ consists of independent components, owing to the independence of the observations and the fact that it is evaluated at the true parameter value rather than at an estimator. The proof is concluded by noting that Assumption~\ref{ass:main}.\ref{ass:growing3} ensures that:
$n^{-\rho/2 + \kappa(7/4 + 3r_0/q_0)} = o(1)$,
and thus \eqref{eq:obj2} holds true.

\end{proof}

\begin{proof}[Proof of Corollary~\ref{cor:gof_stats}:]
The results for $\bar{Y}_{\estall}$ and $Y_{\estall}^{\dagger}$ follow directly from the Continuous Mapping Theorem applied to the vector $Y_{\estall}$, whose components are asymptotically i.i.d. $\chi^2_p$ according to Theorem~\ref{thm:niveau}. For the EDF-based statistics ($Y_{\estall}$-KS, $Y_{\estall}$-CV, and $Y_{\estall}$-AD), the convergence follows from the fact that the target distribution $F_{\chi^2_p}$ is fully specified and independent of any estimated parameters, allowing the use of the standard asymptotic theory for empirical processes.
\end{proof}

\begin{proof}[Proof of Theorem~\ref{thm:alternative}:]
We start by controlling of the maximum term $V_{\est}^{\alt} = \max_{1\leq i \leq n_b} \|\Psi_{\model,p}^{\alt}(\bX_i^{(b)};\wtheta) \|_2$, using arguments similar to those employed in the proof of Lemma~\ref{lem:controlMax}. Specifically, these arguments rely on the fact that replacing $\theta$ with $\wtheta$ results in a negligible remainder and on the existence of $q_0$-th order moments. Since the transition to a local alternative does not affect these two properties, we obtain:
\begin{equation}\label{eq:maxdev}
    V_{\est}^{\alt} = o_\mathbb{P}(n^{\rho/2} p^{-1}).
\end{equation}
We consider the matrices $\bSigma_{\model,p}^{\alt}= \mathbb{E}[\Psi_{\model,p}^{\alt}(\bX;\stheta)\Psi_{\model,p}^{\alt}(\bX;\stheta)^\top]$, 
$\bS_{\any}^{\alt}=\frac{1}{n_b}\sum_{i=1}^{n_b} \Psi_{\model,p}^{\alt}(\bX_i^{(b)};\btheta)\Psi_{\model,p}^{\alt}(\bX_i^{(b)};\btheta)^\top$ 
and
$\bGamma_{\any}^{\alt}=(\bS_{\any}^{\alt})^{-1} -  (\bSigma_{\model,p}^{\alt})^{-1}$. 
Following the same arguments as those employed in the proof of  Lemma~\ref{lem:covmatrix}, we have $\|\bS_{\est}^{\alt} - \bSigma_{\model,p}^{\alt}\|_{sp}=O_\mathbb{P}(n^{-\vartheta_1})$. In addition, we have
$
\bSigma_{\model,p}^{\alt}=\bSigma_{\model,p} + \frac{1}{n_b} \tau^2 \Gamma \Gamma^\top$. 
Since the elements of the $p$-dimensional vector $\Gamma$ are bounded uniformly on $p$, we have that $\|\bSigma_{\model,p}^{\alt}-\bSigma_{\model,p} \|_{sp}=O(pn_b^{-1})$. From Assumptions~\ref{ass:main}, $pn_b^{-1}=o(n^{-\vartheta_1})$ and thus
\begin{equation}\label{eq:spdev}
\|\bS_{\any}^{\alt} - \bSigma_{\model,p}\|_{sp}=o_\mathbb{P}(n^{-\vartheta_1}).
\end{equation}
Following the same arguments as in the proof of Lemma~\ref{lem:covmatrix} to control the spectral norm of $\bGamma_{\est}$, we obtain that
\begin{equation}\label{eq:spdev}
\|\bGamma_{\est}^{\alt}\|_{sp}=O_\mathbb{P}(n^{-\vartheta_1}),
\end{equation}
where $\bGamma_{\est}^{\alt}=(\bS_{\any}^{\alt})^{-1} - \bSigma_{\model,p}^{-1}$.  
Let $\blambda_{\est}^{\alt}$ be the Lagrange multipliers that satisfies the empirical version of the moment condition, leading that
$$
\sum_{i=1}^{n_b}  [n_b(1 + (\blambda_{\est}^{\alt})^\top \Psi_{\model,p}^{\alt}(\bX_i^{(b)};\wtheta))]^{-1} \Psi_{\model,p}^{\alt}(\bX_i^{(b)};\wtheta)=\bzero_p.
$$
To bound the magnitude of the Lagrange multipliers, we define $\blambda_{\est}^{\alt}=\|\blambda_{\est}^{\alt}\|_2 \bnu $ where $\bnu$ is a unit vector of $\mathbb{R}^p$. 
With the arguments used in the proof of Lemma~\ref{lem:lagrange}, we can obtain
$$
 \|\blambda_{\est}^{\alt}\|_2 \left(\bnu^{\top}     \bS_{\est}^{\alt}\bnu - n_b^{-1/2} \bnu^\top  \bZ_{\est}^{\alt}V_{\est}^{\alt} \right) \leq  n_b^{-1/2} \bnu^\top  \bZ_{\est}^{\alt}.
$$
Since $\bZ_{\est}-\bZ_{\est}^{\alt}=\tau\Gamma$, we have $\|\bZ_{\est}-\bZ_{\est}^{\alt}\|_2=O_\mathbb{P}(p^{1/2})$, hence using \eqref{eq:step1b}, we have
$ n_b^{-1/2} \bnu^\top  \bZ_{\est}^{\alt} = O_\mathbb{P}( n^{-\rho/2} p^{1/2})$.
From \eqref{eq:maxdev}, we have $ n_b^{-1/2} \bnu^\top  \bZ_{\est}^{\alt}V_{\est}^{\alt} = o_\mathbb{P}(p^{-1/2})$.
We have by triangular inequality
$
\bnu^\top  \bS_{\est} \bnu \leq \bnu^\top   \bSigma_{\model,p} \bnu + |\bnu^\top  [\bS_{\est}- \bSigma_{\model,p} ]\bnu|$. By Assumption~\ref{ass:main}-\ref{ass:cov}, we have $\bnu^\top   \bSigma_{\model,p} \bnu =O(1)$. Hence, using \eqref{eq:spdev}  and using Assumption~\ref{ass:main}-\ref{ass:growing3} ensuring that $n^{-\vartheta_1}=o(1)$, we obtain that
$\bnu^\top  \bS_{\est}^{\alt} \bnu  = O_\mathbb{P}(1)$
and thus 
\begin{equation}\label{eq:lamdev}
\|\blambda_{\est}^{\alt}\|_2 =O_\mathbb{P}( n^{-\rho/2} p^{1/2}).
\end{equation}
We define
$$\bzeta_{\est}^{\alt} = \frac{1}{n_b} \sum_{i=1}^n \Psi_{\model,p}^{\alt}(\bX_i^{(b)};\wtheta) \frac{(U_{\est,i}^{\alt})^2}{1+U_{\est,i}^{\alt}},
$$
with $U_{\est,i}^{\alt}=(\blambda_{\est}^{\alt})^\top_2V_{\est}^{\alt}$. Using \eqref{eq:maxdev} and \eqref{eq:lamdev}, we have $
 \max_{1\leq i\leq n_b} |1+U_{\est,i} |^{-1} = O_\mathbb{P}(1)$. In addition, we have 
$\frac{1}{n_b}\sum_{i=1}^{n_b}  \|\Psi_{\model,p}(\bX_i^{(b)};\stheta)\|_2^3=O_\mathbb{P}(n^{\kappa(3/2+3r_0/q_0)})$ and thus using the counter-part of \eqref{eq:zeta} that consider the deviation $\altnop$, we have
$$\left\|\bzeta_{\est}^{\alt} \right\|_2 =  O_\mathbb{P}(n^{-\vartheta_2}).$$
We have 
\begin{equation}\label{eq:dlzetaalt}
n_b^{-1/2} \bZ_{\est}^{\alt} - \bS_{\est}^{\alt} \blambda_{\est}^{\alt} + \bzeta_{\est}^{\alt} =  \bzero_p.
\end{equation}
We have shown that  $\sigma_{1}^{-1}(\bS_{\est})=O_\mathbb{P}(1)$. Hence, 
$$\blambda_{\est}^{\alt}  = \frac{1}{n_b^{1/2}}\left(\bS_{\est}^{\alt}\right)^{-1} \bZ_{\est}^{\alt}  + \bbeta_{\est}^{\alt}.$$
 Since we have  $\left\|\bbeta_{\est}^{\alt}\right\|_2\leq \sigma_{1}^{-1}(\bS_{\est}^{\alt})\left\|\bzeta_{\est}^{\alt} \right\|_2 $, 
 then
 \begin{equation}\label{eq:betaalt}
 \left\|\bbeta_{\est}^{\alt}\right\|_2 = O_\mathbb{P}(n^{-\vartheta_2}).     
 \end{equation}
Using \eqref{eq:maxdev}-\eqref{eq:betaalt}, the magnitude of the deviation, and the same reasoning that those used for the proof of Theorem~\ref{thm:niveau}, we obtain that
$
\left(Y_{\estall}^{\alt}  -   W_{\estall}^{\alt}\right)^{\dagger}= o_\mathbb{P}(1),
$
and
$
\left(W_{\trueb}^{\alt}  -   W_{\estall}^{\alt}\right)^{\dagger}= o_\mathbb{P}(1),
$
where $W_{\any}^{\alt}=\left(\bZ_{\any}^{\alt}\right)^\top\left(\bSigma_{\model,p}^{\alt}\right)^{-1}  \bZ_{\any}^{\alt}$ with $\bSigma_{\model,p}^{\alt} = \mathbb{E}[\Psi_{\model,p}^{\alt}(\bX_i^{(b)};\stheta)\Psi_{\model,p}^{\alt}(\bX_i^{(b)};\stheta)^\top] $, leading that
$$
\left(Y_{\estall}^{\alt}  -   W_{\trueb}^{\alt}\right)^{\dagger}= o_\mathbb{P}(1).
$$
Since $\bSigma_{\model,p}^{\alt}=\bSigma_{\model,p} + O_\mathbb{P}(n_b^{-1})$, $\bZ_{\true}^{\alt}=\bZ_{\true} + \Upsilon$,  $\mathbb{E}[\bZ_{\true}]=0$ and $\bZ_{\true}=O_\mathbb{P}(n_b^{-1/2})$, we have
$$
W_{\true}^{\alt}=W_{\true} + \Upsilon^\top\bSigma_{\model,p}^{-1}  \Upsilon + o_\mathbb{P}(1).
$$
Using \eqref{eq:obj2}, we have
$$
\lim_{n\to \infty} \| F_{Y_{\estall}} - \prod_{b=1}^B F_{\mathcal{X}^2_{p}(\Upsilon^\top\bSigma_{\model,p}^{-1}  \Upsilon)}\|_\infty = 0,
$$
where $F_{\mathcal{X}^2_{p}(\Upsilon^\top\bSigma_{\model,p}^{-1}  \Upsilon)}$ denotes the cumulative distribution of a non-central chi-square random variable with $p$ degrees of freedom and non-central parameter equal to $\Upsilon^\top\bSigma_{\model,p}^{-1}  \Upsilon$.
\end{proof}

\section{Proofs of technical results}\label{B.2}

\begin{proof}[Proof of Lemma~\ref{lem:controlMax}:]
Triangular inequality implies that
$$
V_{\est} \leq V_{\true} + |V_{\est} - V_{\true}|.
$$ 
Note that 
$$|V_{\est} - V_{\true}| \leq \max_{1\leq i\leq n} \|\Psi_{\model,p}(\bX_i;\wtheta) - \Psi_{\model,p}(\bX_i;\stheta) \|_2.$$
Hence, Assumptions~\ref{ass:main}-\ref{ass:cvtheta2} implies that $$V_{\est}-V_{\true}=O_\mathbb{P}(n^{-\tau} p^{1/2}).$$ 
To control $V_{\true}$, we follow the idea of  \citet[Lemma 4.1]{Hjort2009AOS}. Using the union bound, we have for any $\varepsilon>0$
$$
\mathbb{P}(V_{\true}\geq \varepsilon ) \leq \sum_{i=1}^{n_b} \mathbb{P}(\|\Psi_{\model,p}(\bX_i^{(b)};\stheta)\|_2 \geq  \varepsilon).
$$
Hence, using Markov's inequality, we have
$$
\mathbb{P}(V_{\true}\geq \varepsilon ) \leq \frac{n_b}{\varepsilon^{q_0}}  \mathbb{E}[\|\Psi_{\model,p}(\bX_i^{(b)};\stheta)\|_2^{q_0}].
$$
Since Assumptions~\ref{ass:main}-\ref{ass:growing2} implies \eqref{eq:moment}, we have
$$
\mathbb{P}(pn^{-\rho/2} V_{\true}\geq \varepsilon ) \leq\frac{n_b}{\varepsilon^{q_0} n^{\rho q_0/2}}p^{3q_0/2+r_0}\tilde{C}.
$$
Using Assumptions~\ref{ass:main}-\ref{ass:cvtheta2} and \ref{ass:main}-\ref{ass:growing3}, there exists a positive constant $C_1$ and a constant $v_4=\kappa(3q_0/2+r_0)+   \rho(1-q_0/2)$ such that
$$
\mathbb{P}(pn^{-\rho/2} V_{\true}\geq \varepsilon ) \leq \frac{C_1}{\varepsilon^{q_0}}n^{v_4}.
$$

From Assumption~\ref{ass:main}-\ref{ass:growing2} and \ref{ass:main}-\ref{ass:growing3}, we have 
\begin{equation*}
v_4 <\frac{\rho}{6}(3q_0/2+r_0)+\rho(1-q_0/2)=\rho(1-q_0/4+r_0/6)\leq  \rho(1-q_0/4) 
\end{equation*}
Hence, since $q_0 \geq 4$, we have $v_4<0$ leading that for any $\varepsilon>0$,
$$
\lim_{n\to\infty} \mathbb{P}(pn^{-\rho/2} V_{\true}\geq \varepsilon ) =0.
$$
Therefore, for any $b$, $$V_{\true}=o_\mathbb{P}(n^{\rho/2}p^{-1}),$$
leading that
$$
V_{\est}=  o_\mathbb{P}(n^{\rho/2}p^{-1}).
$$
\end{proof}

\begin{proof}[Proof of Lemma~\ref{lem:covmatrix}] 
    Let $\upsilon_{\any} = \| \bSigma_{\model,p}  -  \bS_{\any} \|_{\max}$. Since $\bSigma_{\model,p}  -  \bS_{\any} $ is a square matrix of size $p\times p$, we have
$$ \left\|\bSigma_{\model,p}-\bS_{\any} \right\|_{sp} \leq p\upsilon_{\any}.$$ 
Let 
$$
\widetilde{\upsilon}_{n,b} = |\upsilon_{\est} - \upsilon_{\true}|.
$$
Noting that by triangular inequality
$$
\upsilon_{\est} \leq \upsilon_{\true} + \widetilde{\upsilon}_{n,b},
$$
we have
$$
\left\|\bSigma_{\model,p}-\bS_{\est} \right\|_{sp} \leq p\upsilon_{\true} + p\widetilde{\upsilon}_{n,b}.
$$
We have
$$
\widetilde{\upsilon}_{n,b}  \leq \max_{p,j} \max_{1\leq i \leq n}| \psi_{\model,\wtheta,\varphi_{p,j}}(\bX_i) -  \psi_{\model,\stheta,\varphi_{p,j}}(\bX_i)|,$$
leading by Assumptions~\ref{ass:main}-\ref{ass:cvtheta2} and \ref{ass:main}-\ref{ass:growing3} that $p\widetilde{\upsilon}_{n,b}=O_\mathbb{P}(n^{-\tau+\kappa})$. Since all the components of $\Psi_{\model,p}(\bX_i^{(b)};\stheta)$ admit a $q_0$-th order moments by Assumption~\ref{ass:main}-\ref{ass:growing2}, then from \citet[Lemma 4.4]{Hjort2009AOS}, there exists a positive constant $C_2$ such that for any $\varepsilon>0$, we have
$$
\mathbb{P}(\upsilon_{\true}\geq \varepsilon) \leq \frac{C_2 p^2}{\varepsilon^{q_0} n_b^{q_0/2}}a_{\true,q_0}^2.
$$
where $a_{\any,q} = p^{-1} \sum_{j=1}^p \mathbb{E} |\psi_{\model,\btheta_{\model},\varphi_{p,j}}(\bX_i^{(b)})|^q$. By  Assumption~\ref{ass:main}-\ref{ass:growing2}, we have $a_{\true,q_0}\leq \tilde Cp^{r_0}$. Hence, there exists a positive constant $C_3$, such that for any $\varepsilon>0$,
\begin{equation} \label{eq:boundupsilon}
\mathbb{P}(\upsilon_{\true}\geq \varepsilon) \leq \frac{C_3}{\varepsilon^{q_0}}  n^{-q_0(\rho/2 -\kappa(2(1+r_0)/q_0))}.
\end{equation}
Hence, from Assumption~\ref{ass:main}-\ref{ass:growing3}, we have
\begin{equation} \label{eq:bounddiffsigma}
\mathbb{P}(p\upsilon_{\true} \geq \varepsilon) \leq \frac{C_3}{\varepsilon^{q_0}}  n^{-q_0\vartheta_1},
\end{equation}
where $\vartheta_1=\rho/2 -\kappa(1+2(1+r_0)/q_0)>0$, and so
$$
\mathbb{P}(n^{\vartheta_1}p\upsilon_{\true}\geq \varepsilon) \leq \frac{C_2}{\varepsilon^{q_0}}.
$$
Therefore, we have
$$
\left\|\bSigma_{\model,p}-\bS_{\est} \right\|_{sp} = O_{\mathbb{P}}(  n^{-\vartheta_1}) + O_{\mathbb{P}}(n^{-\tau + \kappa}).
$$
Since by Assumption~\ref{ass:main}-\ref{ass:growing1}, we have $\rho/2<\tau$, then $n^{-\tau + \kappa}=o(n^{-\vartheta_1})$. This implies that 
$$
\left\|\bSigma_{\model,p}-\bS_{\est} \right\|_{sp} = O_{\mathbb{P}}(  n^{-\vartheta_1}).
$$

 To control the spectral norm of $\bGamma_{\est}=\bS^{-1}_{\est}-\bSigma^{-1}_{\model,p}$, we use the following decomposition $\bGamma_{\est}= \bS^{-1}_{\est}(\bSigma_{\model,p}-\bS_{\est})\bSigma^{-1}_{\model,p}$ and so we have the inequality
 $$
\| \bGamma_{\est} \|_{sp} \leq \frac{\| \bSigma_{\model,p}-\bS_{\est} \|_{sp}}{\sigma_1(\bSigma_{\model,p}) \sigma_{1}(\bS_{\est})}.
 $$
Weyl's inequality implies that
$$|\sigma_1(\bSigma_{\model,p}) - \sigma_{1}(\bS_{\est})|\leq  \left\|\bSigma_{\model,p}-\bS_{\est} \right\|_{sp}.$$
By Assumption~\ref{ass:main}-\ref{ass:cov}, $\sigma_1(\bSigma_{\model,p})=O(1)$. In addition, since $\vartheta_1>0$, $\sigma_1(\bS_{\true})$ converges in probability to the smallest singular value of $\bSigma_{\model,p}$ leading that $\sigma_1(\bS_{\est})=O_\mathbb{P}(1)$ where the order holds uniformly on $p$. Therefore, we have
 $$
\| \bGamma_{\est} \|_{sp}=  O_{\mathbb{P}}(  n^{-\vartheta_1}).
 $$
\end{proof}

\begin{proof}[Proof of Lemma~\ref{lem:lagrange}:]
This proof extends the Lagrange multipliers proof provided by \citet[page~221]{owen2001empirical} to the case of growing dimension.  First, note that maximizing the empirical likelihood with respect to the weights implies that
$$
\xi_{\any,i} = [n_b(1 + \blamany^\top \Psi_{\model,p}(\bX_i^{(b)};\btheta))]^{-1}.
$$ 
The Lagrange multipliers satisfies the empirical counter-part of the moment condition $\mathbb{E}[\Psi_{\model,p}(\bX_i^{(b)};\btheta)]=\bzero_p$, leading that
\begin{equation}\label{eq:weights}
\sum_{i=1}^{n_b}  [n_b(1 + \blamany^\top \Psi_{\model,p}(\bX_i^{(b)};\btheta))]^{-1} \Psi_{\model,p}(\bX_i^{(b)};\btheta)=\bzero_p.
\end{equation}
To bound the magnitude of the Lagrange multipliers, we define $\blamany=\|\blamany\|_2 \bnu $ where $\bnu$ is a unit vector of $\mathbb{R}^p$. Let $U_{\any,i}=\Psi_{\model,p}(\bX_i^{(b)};\btheta)^\top\blamany$. Noting that $(1+U_{\any,i})^{-1} = 1 - U_{\any,i}/(1+U_{\any,i})$, from \eqref{eq:weights}, we have
\begin{equation}\label{eq:weights2}
\frac{1}{n_b} \sum_{i=1}^n \frac{U_{\any,i}}{1+U_{\any,i}} \Psi_{\model,p}(\bX_i^{(b)};\btheta) = \frac{1}{n_b} \sum_{i=1}^n \Psi_{\model,p}(\bX_i^{(b)};\btheta).
\end{equation}
Let $\bS^{\bw}_{\any}$ the weighted empirical covariance matrix of $\Psi_p(\bX_i^{(b)};\btheta  )$ defined by
$$
\bS^{\bw}_{\any}=\frac{1}{n_b} \sum_{i=1}^{n_b} \frac{1}{1+U_{\any,i}}  \Psi_{\model,p}(\bX_i^{(b)};\btheta) \Psi_{\model,p}(\bX_i^{(b)};\btheta)^\top.
$$
Then, multiplying both sides of  \eqref{eq:weights2} by $\bnu^\top$, we have
\begin{equation} \label{eq:rellambda}
  \bnu^\top  \bS^{\bw}_{\any} \bnu \|\blamany\|_2 = n_b^{-1/2} \bnu^\top  \bZany.
\end{equation}
Define the unweighted empirical covariance matrix $\bS_{\any}$ by
$$\bS_{\any}=\frac{1}{n_b} \sum_{i=1}^{n_b} \Psi_{\model,p}(\bX_i^{(b)};\btheta)\Psi_{\model,p}(\bX_i^{(b)};\btheta)^\top.$$
Since all the weights $\xi_{\any,i}$ are strictly positive, then
$$\bnu^\top \bS_{\any} \bnu \leq \bnu^\top \bS^{\bw}_{\any}\bnu (1+\max_{1\leq i \leq n_b} U_{\any,i}).$$
Noting that  $\max_{1\leq i \leq n_b} |U_{\any,i}| \leq \|\blamany\|_2 V_{\any}$, we have for any $\btheta$
\begin{equation*}\label{eq:orderlambdap}
    \|\blamany\|_2 \bnu^{\top}     \bS_{\any}  \bnu   
   \leq  \|\blamany\|_2 \bnu^{\top}  \bS^{\bw}_{\any}\bnu (1+   \|\blamany\|_2 V_{\any} ).
\end{equation*}
Using \eqref{eq:rellambda} to replace $ \|\blamany\|_2 \bnu^{\top}  \bS^{\bw}_{\any}\bnu $ in  the previous inequality then evaluating the resulting inequality at $\btheta=\wtheta$    gives
\begin{equation}\label{eq:step1}
 \|\blambda_{\est}\|_2 \left(\bnu^{\top}     \bS_{\est}\bnu - n_b^{-1/2} \bnu^\top  \bZ_{\est}V_{\est} \right) \leq  n_b^{-1/2} \bnu^\top  \bZ_{\est}.
\end{equation}
Let $\widetilde{\bZ}_{\model,n,p,b}= \bZ_{\true} - \bZ_{\est}$, triangular inequality implies
\begin{equation*}
\|  \bZ_{\est}\|_2\leq  \| \bZ_{\true} \|_2+ \|\widetilde{\bZ}_{\model,n,p,b}\|_2.
\end{equation*}
Noting that by Assumption~\ref{ass:main}-\ref{ass:cvtheta2}, 
$\widetilde{\bZ}_{\model,n,p}^{\dagger}=O_\mathbb{P}(n^{-\tau + \rho/2}p^{1/2})$. Since, $\rho/2<\tau$, we have
$$\widetilde{\bZ}_{\model,n,p}^{\dagger}=o_\mathbb{P}(p^{1/2}).$$
Note that 
$$
 \|\bZ_{\true}\|_2^2 = \sum_{j=1}^p  \frac{1}{n_b} \sum_{i=1}^{n_b} \sum_{i'=1}^{n_b}  \psi_{\model,\btheta_0,\varphi_{p,j}}(\bX_i^{(b)}) \psi_{\model,\btheta_0,\varphi_{p,j}}(\bX_{i'}^{(b)}).
$$
Since the observations are independent and $\psi_{\model,\btheta_0,\varphi_{p,j}}(\bX_i^{(b)})$ is centered then, 
$$\mathbb{E}[\|\bZ_{\true}\|_2^2]=\text{trace}(\bSigma_{\model,p}).$$
Since Assumption~\ref{ass:main}-\ref{ass:cov} implies that $\text{trace}(\bSigma_{\model,p})=O(p)$, Markov's inequality with second order moment implies that
$
\| \bZ_{\true}\|_2=O_\mathbb{P}(p^{1/2})
$ 
and thus
\begin{equation} \label{eq:step1b}
\| \bZ_{\est}\|_2=O_\mathbb{P}(p^{1/2}).
\end{equation}
Therefore, 
\begin{equation} \label{eq:step2}
 n_b^{-1/2} \bnu^\top  \bZ_{\est} = O_\mathbb{P}( n^{-\rho/2} p^{1/2}).
\end{equation}
Using Lemma~\ref{lem:controlMax} to control  $V_{\est}$, we have
\begin{equation}\label{eq:step3}
 n_b^{-1/2} \bnu^\top  \bZ_{\est}V_{\est} = o_\mathbb{P}(p^{-1/2}).
\end{equation}
We have by triangular inequality
$$
\bnu^\top  \bS_{\est} \bnu \leq \bnu^\top   \bSigma_{\model,p} \bnu + |\bnu^\top  [\bS_{\est}- \bSigma_{\model,p} ]\bnu|.
$$
By Assumption~\ref{ass:main}-\ref{ass:cov}, we have $\bnu^\top   \bSigma_{\model,p} \bnu =O(1)$. In addition, we have 
\begin{align*}
|\bnu^\top  [\bS_{\est}- \bSigma_{\model,p} ]\bnu| &\leq \|\bS_{\est}- \bSigma_{\model,p} \|_{sp} \|\bnu\|_2^2 \\
&=\|\bS_{\est}- \bSigma_{\model,p} \|_{sp}.
\end{align*}
Hence, using Lemma~\ref{lem:covmatrix} for the order of this spectral norm and using Assumption~\ref{ass:main}-\ref{ass:growing3} ensuring that $n^{-\vartheta_1}=o(1)$, we obtain that
\begin{equation} \label{eq:step4}
\bnu^\top  \bS_{\est} \bnu  = O_\mathbb{P}(1).
\end{equation}
Starting from \eqref{eq:step1} and using \eqref{eq:step2}, \eqref{eq:step3} and \eqref{eq:step4}, we have
$$
 \|\blambda_{\est}\|_2 ( O_\mathbb{P}(1) - o_\mathbb{P}(1) ) = O_\mathbb{P}( n^{-\rho} p^{1/2}),
$$
and thus $$\|\blambda_{\est}\|_2 =O_\mathbb{P}( n^{-\rho/2} p^{1/2}).$$
We now give the establish the asymptotic expansion of the Lagrange multipliers. Hence, we define 
\begin{equation}\label{eq:defzeta}
\bzeta_{\any} = \frac{1}{n_b} \sum_{i=1}^n \Psi_{\model,p}(\bX_i^{(b)};\btheta) \frac{U_{\any,i}^2}{1+U_{\any,i}}.
\end{equation}
 Using the triangle inequality, we have
\begin{align}
   \left\|\bzeta_{\any} \right\|_2 \leq & \frac{1}{n_b} \sum_{i=1}^{n_b} \left\|\Psi_{\model,p}(\bX_i^{(b)};\btheta) \frac{U_{\any,i}^2}{1+U_{\any,i}}\right\|_2 \nonumber\\
  \leq &  \|\blamany\|_2^2 \left(\max_{1\leq i \leq n_b}   |1+U_{\any,i}|^{-1}\right)   \frac{1}{n_b} \sum_{i=1}^{n_b}  \|\Psi_{\model,p}(\bX_i^{(b)};\btheta)\|_2^3. \label{eq:zeta}
\end{align}
Since for any $\btheta$, we have $$\max_{1\leq i \leq n_b} |U_{\any,i}| \leq \| \blambda_{\any}\|_2V_{\any},$$ then, we have 
$$
\max_{1\leq i \leq n_b} |U_{\est,i}|  =o_\mathbb{P}(p^{-1/2}).
$$
Taylor expansion of $(1+s)^{-1}$ around $s=0$ implies that
$$
\max_{1\leq i \leq n_b}|1+U_{\est,i} |^{-1} \leq \max_{1\leq i \leq n_b} |1+(1+o(1))\max_{1\leq i \leq n_b}U_{\est,i}|
$$
and  hence $$
 \max_{1\leq i\leq n_b} |1+U_{\est,i} |^{-1} = O_\mathbb{P}(1).
$$
Assumption~\ref{ass:main}-\ref{ass:growing2} implies \eqref{eq:moment}, so by Hölder's inequality    for any integer $s$ such that $s\leq q_0$, we have
$$
\mathbb{E} \|\Psi_{\model,p}(\bX_i^{(b)};\stheta)\|_2^{s}=O(p^{s/2+sr_0/q_0}).
$$
In addition, Minkowski's inequality implies that
$$
 \|\Psi_{\model,p}(\bX_i^{(b)};\wtheta)\|_2^{s} \leq 2^{s/2} \left(\|\Psi_{\model,p}(\bX_i^{(b)};\stheta)\|_2^s + \|\Psi_{\model,p}(\bX_i^{(b)};\wtheta) - \Psi_{\model,p}(\bX_i^{(b)};\stheta) \|_2^s \right).
$$
Since $\max_{1\leq i\leq n} \|\Psi_{\model,p}(\bX_i;\wtheta) - \Psi_{\model,p}(\bX_i;\stheta) \|_2=O_\mathbb{P}(n^{-\tau}p^{1/2})$ and $n^{-\tau}p^{1/2}=o(p^{3/2+3r_0/q_0})$,  we have
$$\mathbb{E} \|\Psi_{\model,p}(\bX_1;\wtheta)\|_2^{3}=O(p^{3/2+3r_0/q_0}).$$
Law of Large Number and Assumptions~\ref{ass:main}-\ref{ass:growing2} imply that 
$$\frac{1}{n_b}\sum_{i=1}^{n_b}  \|\Psi_{\model,p}(\bX_i^{(b)};\stheta)\|_2^3=O_\mathbb{P}(n^{\kappa(3/2+3r_0/q_0)}).$$ 
Let $\vartheta_2=\rho -\kappa(5/2+3r_0/q_0) >0$ from Assumptions~\ref{ass:main}-\ref{ass:growing3}, then  from \eqref{eq:zeta}, we have
$$\left\|\bzeta_{\est} \right\|_2 =  O_\mathbb{P}(n^{-\vartheta_2}).$$
Noting that $(1+U_{\any,i})^{-1} = 1 - U_{\any,i} +  U_{\any,i}^2/(1+U_{\any,i})$ and that the Lagrange multipliers satisfies the empirical counter-part of the moment condition $\mathbb{E}[\Psi_{\model,p}(\bX_i^{(b)};\btheta)]=\bzero_p$ (see \eqref{eq:weights}), we have for any $\btheta$
\begin{equation}\label{eq:dlzeta}
n_b^{-1/2} \bZany - \bS_{\any} \blamany + \bzeta_{\any} =  \bzero_p.
\end{equation}
 For any $\btheta$ such that  $\bS_{\any}$ is invertible, define  $$\bbeta_{\any}=\bS_{\any}^{-1}\bzeta_{\any}.$$ 
We have shown that  $\sigma_{1}^{-1}(\bS_{\est})=O_\mathbb{P}(1)$. Hence, considering \eqref{eq:dlzeta} at $\btheta=\wtheta$ and multiplying by the inverse of $\bS_{\est}$ yields
$$\blambda_{\est}  = \frac{1}{n_b^{1/2}}\left(\bS_{\est}\right)^{-1} \bZ_{\est}  + \bbeta_{\est}.$$
 Since we have  $$\left\|\bbeta_{\est}\right\|_2\leq \sigma_{1}^{-1}(\bS_{\est})\left\|\bzeta_{\est} \right\|_2, $$
 then
 $$
 \left\|\bbeta_{\est}\right\|_2 = O_\mathbb{P}(n^{-\vartheta_2}).
 $$
 
\end{proof}

\begin{proof}[Proof of Lemma~\ref{lem:vectorrate}:]
Since we have
$$
|Z^\dagger_{n,p,\wtheta} - Z^\dagger_{n,p,\stheta}|\leq \max_{1\leq b \leq B} \left[n_b^{1/2} \max_{1\leq i \leq n_b} \|\Psi_{\model,p}(\bX_i^{(b)};\stheta)-\Psi_{\model,p}(\bX_i^{(b)};\wtheta) \|_2\right], 
$$
then by Assumption~\ref{ass:main}-\ref{ass:cvtheta2}, \ref{ass:main}-\ref{ass:growing1}, and \ref{ass:main}-\ref{ass:growing3}, we have
\begin{equation}\label{eq:oderapproxZ}
Z^\dagger_{n,p,\wtheta}= Z^\dagger_{n,p,\stheta} + O_\mathbb{P}(n^{-\tau+\rho/2+\kappa/2}).\end{equation}
To control  $Z^\dagger_{n,p,\stheta}$, we define for any $\btheta$,   $\bG_{\any} = \bSigma_{\model,p}^{-1/2}\bZ_{\any}$. We have
  $$\|\bZ_{\true}\|_2 \leq \| \bSigma_{\model,p}^{1/2}\|_{sp} \|\bG_{\true}\|_2,$$
  leading by Assumption \ref{ass:main}-\ref{ass:cov} that
    $$\|\bZ_{\true}\|_2 =O_\mathbb{P}(\|\bG_{\true}\|_2),$$
    and
    $$
   Z^\dagger_{n,p,\stheta} =O_\mathbb{P}(1) \max_{1\leq b \leq B} \|\bG_{\true}\|_2.
    $$
From \citet{bentkus2003dependence}, we can control the difference between the cumulative distribution function of $\bG_{\true}$ and the cumulative distribution function of a $p$-dimensional standard Gaussian random variable. Let
$$\Delta_n= \sup_{A \in \mathcal{C}} |\mathbb{P}(\bG_{\true}\in A) - \bnu(A)|,$$
where $\mathcal{C}$ is the class of convex subsets of $\mathbb{R}^p$ and $\bnu$ is the standard $p$ dimensional normal distribution, then  \citet{bentkus2003dependence} states that we have
 $$
 \Delta_n \leq 400 p^{1/4} \mathbb{E}[\|\bSigma_{\model,p}^{-1/2}\Psi_{\model,p}(\bX;\stheta) \|_2^3] n_b^{-1/2}.
 $$
 Using our assumptions, we have
 $$
 \Delta_n = O(n^{-\rho/2 + \kappa(7/4 + 3r_0/q_0)}).
 $$
 Hence, we have
 $$
 \sup_{t\in\mathbb{R}^+} | \mathbb{P}(\|\bG_{\true}\|^2_2<t)- F_{\mathcal{X}^2_p}(t) | =  O(n^{-\rho/2 + \kappa(7/4 + 3r_0/q_0)}),
 $$
where $F_{\mathcal{X}^2_p}$ is the cumulative distribution function of chi-square random variable with $p$ degrees of freedom. Hence, by independence between the observations
 $$
 \sup_{t\in\mathbb{R}^+} | \mathbb{P}(\max_{1\leq b\leq B} \|\bG_{\trueb}\|_2^2<t)- F^\dagger_{B,p}(t) | =  O(B n^{-\rho/2 + \kappa(7/4 + 3r_0/q_0)}),
 $$
 where $F_{\mathcal{X}^2_p}$ is the cumulative distribution function of the maximum of $B$ independent chi-square random variables with $p$ degrees of freedom each.
 Note that by Assumption~\ref{ass:main}-\ref{ass:growing1} we have $B n^{-\rho/2 + \kappa(7/4 + 3r_0/q_0)}=O(n^{1-3\rho/2 + \kappa(7/4 + 3r_0/q_0)})$ and thus, since $n^{1-3\rho/2 + \kappa(7/4 + 3r_0/q_0)}$ tends to zero by Assumptions~\ref{ass:main}-\ref{ass:growing3}, the approximation of $\max_{1\leq b\leq B} \|G_{\true}\|_2^2$ by a maximum of $B$ independent chi-square random variables with $p$ degrees of freedom is valid. In addition, the stochastic order of the maximum of $B$ independent chi-square random variables with $p$ degrees of freedom is of order is $O_\mathbb{P}(p + \ln B)$. Therefore, we have
 $$
 \max_{1\leq b \leq B} \|\bG_{\true}\|_2^2 = O_\mathbb{P}(n^{\kappa} + \ln n),
 $$
 leading that
 $$
  Z^\dagger_{n,p,\stheta}= O_\mathbb{P}(n^{\kappa/2} + \ln^{1/2} n).
 $$
 Hence, combining \eqref{eq:oderapproxZ} and Assumption \ref{ass:main}-\ref{ass:growing1}, we have
 $$
  Z^\dagger_{n,p,\wtheta}= O_\mathbb{P}(n^{\kappa/2} + \ln^{1/2} n).
 $$
 For any $\btheta$, define
 
$$ S^\dagger_{n,p,\btheta}=\left(\bSigma_{\model,p}-\bS_{\anyb} \right)^{\dagger},$$
we have
$$ S^\dagger_{n,p,\btheta} \leq p \upsilon^{\dagger}_{\anyb}.$$ 
Recall that $\widetilde{\upsilon}_{n,b} = |\upsilon_{\est} - \upsilon_{\true}|$, then we have by triangular inequality
$$
 \upsilon_{\estall}^{\dagger} \leq   \upsilon_{\trueb}^{\dagger} +  \widetilde{\upsilon}_{n}^{\dagger}.
$$
We have
$$
\widetilde{\upsilon}_{n}^{\dagger}  \leq \max_{p,j}\max_{1\leq i\leq n} | \psi_{\model,\wtheta,\varphi_{p,j}}(\bX_i) -  \psi_{\model,\stheta,\varphi_{p,j}}(\bX_i)|,
$$
leading by Assumptions~\ref{ass:main}-\ref{ass:cvtheta2} that
$$
p\widetilde{\upsilon}_{n}^{\dagger}=O_\mathbb{P}(n^{-\tau+\kappa}).
$$
Using the union bound and \eqref{eq:bounddiffsigma}, 
we have
$$
\mathbb{P}(p\upsilon_{\trueb}^{\dagger} \geq \varepsilon) \leq B\frac{C_2}{\varepsilon^{q_0}}  n^{-q_0 \vartheta_1}.
$$
Therefore, there exists a positive constant $C_3$ such that
$$
\mathbb{P}(p\upsilon_{\trueb}^{\dagger}   \geq \varepsilon) \leq  \frac{C_3}{\varepsilon^{q_0}}  n^{-q_0 \vartheta_3},
$$
where $\vartheta_3 = \vartheta_1 + \rho/q_0 - 1/q_0$ leading that
$$
p\upsilon_{\trueb}^{\dagger} = O_\mathbb{P}(n^{-\vartheta_3}).
$$
Noting that $\vartheta_3<\tau-\kappa$, we have $ p \upsilon_{\estall}^{\dagger}=O_\mathbb{P}(n^{-\vartheta_3})$ and thus
$$
S^\dagger_{n,p,\wtheta} =O_\mathbb{P}(n^{ -\vartheta_3}).
$$
By triangular inequality, we have
 $$
\bGamma^{\dagger}_{\estall} \leq \frac{\left( \bSigma_{\model,p}-\bS_{\estall}\right)^{\dagger}}{ \sigma_1(\bSigma_p)\min_{1 \leq b \leq B} \sigma_{1}(\bS_{\est})},
 $$
By Assumption~\ref{ass:main}-\ref{ass:cov}, we have $1/\sigma_1(\bSigma_p)=O(1)$. In addition, by Assumption~\ref{ass:main}-\ref{ass:growing3}, $\vartheta_3>0$ leading that  $S^\dagger_{n,p,\wtheta} =o_\mathbb{P}(1)$ and thus  $1/\min_{1 \leq m \leq B} \sigma_{1}(\bS_{\est})=1/\sigma_1(\bSigma_p)+O_\mathbb{P}(1)$. Therefore,
 $$
\bGamma^{\dagger}_{\estall}= O_\mathbb{P}\left(S^\dagger_{n,p,\wtheta}  \right).
$$
\end{proof}
\end{appendices}
 
\end{document}